\documentclass[12pt]{article}

\usepackage[utf8]{inputenc}

\usepackage{a4,amsmath,amsfonts,amsthm,latexsym,amssymb,graphicx}
\usepackage{fullpage}
\usepackage{tocloft}
\usepackage{bm}
\usepackage{enumerate}
\usepackage{enumitem}
\usepackage{bbm}
\usepackage{mathtools}
\usepackage{tikz}
\usepackage{tikz-cd}
\usepackage{pgf}
\usepackage{hhline}
\usepackage{float}
\usepackage{url}
\usepackage[symbol]{footmisc}

\DeclareMathSymbol{\Q}{\mathalpha}{AMSb}{"51}
\DeclareMathSymbol{\R}{\mathalpha}{AMSb}{"52}
\DeclareMathSymbol{\Z}{\mathalpha}{AMSb}{"5A}
\DeclareMathSymbol{\N}{\mathalpha}{AMSb}{"4E}
\DeclareMathSymbol{\C}{\mathalpha}{AMSb}{"43}

\newcommand{\Ann}{\ensuremath{\text{\rm Ann}}}
\newcommand{\Per}{\ensuremath{\text{\rm Per}}}
\newcommand{\ord}{\ensuremath{\text{\rm ord}}}
\newcommand{\supp}{\ensuremath{\text{\rm supp}}}
\newcommand{\Res}{\ensuremath{\text{\rm Res}}}
\newcommand{\conv}{\ensuremath{\text{\rm conv}}}
\newcommand{\fib}[2]{\ensuremath{\text{\rm fib}}_{#1}(#2)}
\newcommand{\A}{\ensuremath{\mathcal{A}}}

\newcommand{\Patt}[2]{\ensuremath{\mathcal{L}_{#2}(#1)}}
\newcommand{\Lang}[1]{\ensuremath{\mathcal{L}(#1)}}

\renewcommand{\vec}[1]{\mathbf{#1}}

\newtheorem*{theorem*}{Theorem}
\newtheorem*{corollary*}{Corollary}

\newtheorem{theorem}{Theorem}
\newtheorem{lemma}[theorem]{Lemma}
\newtheorem{corollary}[theorem]{Corollary}

\newtheorem*{claim*}{Claim}

\newtheorem{reformulation}{Reformulation of Theorem}

\theoremstyle{definition}

\newtheorem{example}[theorem]{Example}

\title{On forced periodicity of perfect colorings}
\author{Pyry Herva and Jarkko Kari}
\date{}

\begin{document}

\maketitle

\begin{abstract}
\noindent
We study forced periodicity of two-dimensional configurations under certain constraints and use an algebraic approach to multidimensional symbolic dynamics in which $d$-dimensional configurations and finite patterns are presented as formal power series and Laurent polynomials, respectively, in $d$ variables.
We consider perfect colorings that are configurations such that the number of points of a given color in the neighborhood of any point depends only on the color of the point for some fixed relative neighborhood,
and we show that by choosing the alphabet suitably any perfect coloring has a non-trivial annihilator, that is, there exists a Laurent polynomial whose formal product with the power series presenting the perfect coloring is zero.
Using known results we obtain a sufficient condition for forced periodicity of two-dimensional perfect colorings.
As corollaries of this result we get simple new proofs for known results of forced periodicity on the square and the triangular grids.
Moreover, we obtain a new result concerning forced periodicity of perfect colorings in the king grid.
We also consider perfect colorings of a particularly simple type: configurations that have low abelian complexity with respect to some shape, and we generalize a result that gives a sufficient condition for such configurations to be necessarily periodic.
Also, some algorithmic aspects are considered.
\end{abstract}


\section{Introduction}

We say that a $d$-dimensional configuration $c \in \A^{\Z^d}$, that is, a coloring of the $d$-dimensional integer grid $\Z^d$ using colors from a finite set $\A$ is a perfect coloring with respect to some finite relative neighborhood $D\subseteq \Z^d$ if the number of any given color of $\A$ in the pattern $c|_{\vec{u} + D}$ depends only on the color $c(\vec{u})$ for any $\vec{u} \in \Z^d$.
There is a similar version of this definition for general graphs: a vertex coloring $\varphi \colon V \rightarrow \A$ of a graph $G=(V,E)$ with a finite set $\A$ of colors is a perfect coloring of radius $r$ if the number of any given color in the $r$-neighborhood of a vertex $u \in V$ depends only on the color $\varphi(u)$ of $u$ \cite{puzynina, puzynina2}.
More generally, the definition of perfect colorings is a special case of the definition of equitable partitions \cite{godsil}.

If $\varphi \colon V \rightarrow \{0,1\}$ is a binary 
vertex coloring of a graph $G=(V,E)$  then we can define a subset $C \subseteq V$ of the vertex set -- a code -- such that it contains all the vertices with color $1$.
If $\varphi$ is a perfect coloring of radius $r$, then the code $C$ has the property that the number of codewords of $C$ in the $r$-neighborhood of a vertex $u \in V$ is $a$ if $u \not \in C$ and $b$ if $u \in C$ for some fixed non-negative integers $a$ and $b$.
This kind of code is called a perfect $(r,b,a)$-covering or simply just a perfect multiple covering \cite{Axenovich, coveringcodes}.
This definition is related to domination in graphs and covering codes \cite{fundamentals-of-domination,coveringcodes}.

Let $D \subseteq \Z^d$ be a finite set and $\A$ a finite set of colors.
Two finite patterns $p, q \in \A^D$ are abelian equivalent if the number of occurrences of each symbol in $\A$ is the same in them.
The abelian complexity of a configuration $c \in \A^{\Z^d}$ with respect to a finite shape $D$ is the number of abelian equivalence classes of patterns of shape $D$ in $c$ \cite{puzynina3}.
We note that if $c \in \A^{\Z^d}$ is a perfect coloring with respect to $D$ and $|\A| = n$, then the abelian complexity of $c$ with respect to $D$ is at most $n$.
Abelian complexity is a widely studied concept in one-dimensional symbolic dynamics and combinatorics on words \cite{lothaire}.



In this paper we study forced periodicity of two-dimensional perfect colorings, that is, we study conditions under which  all the colorings are necessarily periodic.
We give a general condition for forced periodicity.
As corollaries of this result we get new proofs for known results \cite{Axenovich, puzynina,puzynina2} concerning forced periodicity of perfect colorings in the square and the triangular grid and a new result for forced periodicity of perfect colorings in the king grid.
Moreover, we study two-dimensional configurations of low abelian complexity, that is, configurations that have abelian complexity 1 with respect to some shape: we generalize a statement of forced periodicity concerning this type of configurations.
We use an algebraic approach \cite{icalp} to multidimensional symbolic dynamics, {\it i.e.}, we present configurations as formal power series and finite patterns as Laurent polynomials.
This approach was developed to make progress in a famous open problem in symbolic dynamics -- Nivat's conjecture \cite{Nivat} -- concerning forced periodicity of two-dimensional configurations that have a sufficiently low number of $m \times n$ rectangular patterns for some $m,n$. The Nivat's conjecture thus claims a two-dimensional generalization of the Morse-Hedlund theorem \cite{morse-hedlund}.

This article is an extended version of the conference paper \cite{DLT} where we considered forced periodicity of perfect coverings, that is, perfect colorings with only two colors.

\subsection*{The structure of the paper}

We begin in Section \ref{basics} by introducing the basic concepts of symbolic dynamics, cellular automata and graphs, and defining perfect colorings formally.
In Section \ref{algebraic approach} we present the relevant algebraic concepts and the algebraic approach to multidimensional symbolic dynamics,
and
in Section \ref{line polynomial factors} we describe an algorithm to find the line polynomial factors of a given two-dimensional Laurent polynomial.
In Section \ref{perfect coverings} we consider forced periodicity of perfect coverings, {\it i.e.}, perfect colorings with only two colors and then in Section \ref{perfect colorings} we extend the results from the previous section to concern perfect colorings using arbitrarily large alphabets.
After this we prove a statement concerning forced periodicity of two-dimensional configurations of low abelian complexity in Section \ref{abelian}.
In Section \ref{algortihmic aspects} we consider some algorithmic questions concerning perfect colorings.

\section{Preliminaries} \label{basics}



\subsection*{Basics on symbolic dynamics}

Let us review briefly some basic concepts of symbolic dynamics relevant to us.
For a reference see \emph{e.g.} \cite{tullio,kurka,lindmarcus}.
Although our results concern mostly two-dimensional configurations, we state our definitions 
in an arbitrary dimension.

Let $\A$ be a finite set (the \emph{alphabet}) and let $d$ be a positive integer (the \emph{dimension}).
A $d$-dimensional \emph{configuration} over $\A$ is a coloring of the infinite grid $\Z^d$ using colors from $\A$, that is, an element of $\A^{\Z^d}$ -- the \emph{$d$-dimensional configuration space} over the alphabet $\A$.
We denote by $c_{\vec{u}} = c(\vec{u})$ the symbol or color that a configuration $c \in \A^{\Z^d}$ has in cell $\vec{u}$.
The \emph{translation} $\tau^{\vec{t}}$ by a vector $\vec{t} \in \Z^d$ shifts a configuration $c$ such that $\tau^{\vec{t}}(c)_{\vec{u}} = c_{\vec{u} - \vec{t}}$ for all $\vec{u}\in\Z^d$.
A configuration $c$ is \emph{$\vec{t}$-periodic} if $\tau^{\vec{t}}(c) = c$,
and it is \emph{periodic} if it is $\vec{t}$-periodic for some non-zero $\vec{t} \in \Z^d$.
Moreover, we say that a configuration is \emph{periodic in direction} $\vec{v} \in \Q^d\setminus\{\vec{0}\}$ if it is $k \vec{v}$-periodic for some $k \in \Z$.
A $d$-dimensional configuration $c$
is \emph{strongly periodic} if it has $d$
linearly independent vectors of periodicity.
A strongly periodic configuration is periodic in every rational direction.
Two-dimensional strongly periodic configurations are called \emph{two-periodic}.


A finite \emph{pattern} is an assignment of symbols on some finite shape $D \subseteq \Z^d$, that is, an element of $\A^D$.
In particular, the finite patterns in $\A^D$ are called \emph{$D$-patterns}.
Let us denote by $\A^*$ the set of all finite patterns over $\A$ where the dimension $d$ is known from the context.
We say that a finite pattern $p \in \A^D$ \emph{appears} in a configuration $c \in \A^{\Z^d}$ or that $c$ \emph{contains} $p$ if $\tau^{\vec{t}}(c)|_D = p$ for some $\vec{t} \in \Z^d$.
For a fixed shape $D$, the set of all $D$-patterns of $c$ is the set $\Patt{c}{D} = \{ \tau^{\vec{t}}(c)|_D \mid \vec{t} \in \Z^d \}$ and the set of all finite patterns of $c$
is denoted by $\Lang{c}$ which is called the \emph{language of $c$}. For a set $\mathcal{S} \subseteq \A^{\Z^d}$ of configurations
we define $\Patt{\mathcal{S}}{D}$ and  $\Lang{\mathcal{S}}$ as the unions of $\Patt{c}{D}$ and $\Lang{c}$ over all
$c\in \mathcal{S}$, respectively.

The \emph{pattern complexity} $P(c,D)$ of a configuration $c \in \A^{\Z^d}$ with respect to a shape $D$ is the number of distinct $D$-patterns that $c$ contains.
For any $a \in \A$ we denote by $|p|_a$ the number of occurrences of the color $a$ in a finite pattern $p$.
Two finite patterns $p,q \in \A^{D}$ are called \emph{abelian equivalent} if $|p|_a = |q|_a$ for all $a \in \A$, that is, if the number of occurrences of each color is the same in both $p$ and $q$.
The \emph{abelian complexity} $A(c,D)$ of a configuration $c \in \A^{\Z^2}$ \emph{with respect to a finite shape $D$} is the number of different $D$-patterns in $c$ up to abelian equivalence \cite{puzynina3}.
Clearly $A(c,D) \leq P(c,D)$.
We say that $c$ has \emph{low complexity} with respect to $D$ if
$$
P(c,D) \leq |D|
$$
and that $c$ has \emph{low abelian complexity} with respect to $D$ if
$$
A(c,D) = 1.
$$

The configuration space $\A^{\Z^d}$ can be made a compact topological space by endowing $\A$ with the discrete topology and considering the product topology it induces on $\A^{\Z^d}$ -- the \emph{prodiscrete topology}.
This topology is
induced by a metric where two configurations are close if they agree on a large area around the origin.
So, $\A^{\Z^d}$ is a compact metric space.

A subset $\mathcal{S} \subseteq \A^{\Z^d}$ of the configuration space is a \emph{subshift} if it is topologically closed and translation-invariant meaning that if $c \in \mathcal{S}$, then for all $\vec{t} \in \Z^d$ also $\tau^{\vec{t}}(c) \in \mathcal{S}$.
Equivalently, subshifts can be defined using forbidden patterns:
Given a set $F \subseteq \A^*$ of \emph{forbidden} finite patterns, the set
$$
X_F=\{c \in \A^{\Z^d} \mid  \Lang{c} \cap F = \emptyset \}
$$
of configurations that avoid all forbidden patterns
is a subshift. Moreover, every subshift is obtained by forbidding some set of finite patterns.
If $F \subseteq \A^*$ is finite, then we say that $X_F$ is a \emph{subshift of finite type} (SFT).

The \emph{orbit} of a configuration $c$ is the set $\mathcal{O}(c) = \{ \tau^{\vec{t}}(c) \mid \vec{t} \in \Z^d \}$ of its every translate.
The
\emph{orbit closure} $\overline{\mathcal{O}(c)}$ is the topological closure of its orbit under the prodiscrete topology.
The orbit closure of a configuration $c$ is the smallest subshift that contains $c$. It consists of all configurations $c'$ such that $\Lang{c'}\subseteq \Lang{c}$.

\subsection*{Cellular automata}

Let us describe briefly an old result of cellular automata theory 
that we use in Section~\ref{perfect colorings}.
See \cite{casurveyjarkko} for a more thorough survey on the topic.

A $d$-dimensional \emph{cellular automaton} or a \emph{CA} for short over a finite alphabet $\A$ is a map $F \colon \A^{\Z^d} \longrightarrow \A^{\Z^d}$ determined by a neighborhood vector $N = (\vec{t}_1, \ldots, \vec{t}_n )$ and a local rule $f \colon \A^n \longrightarrow \A$ such that
$$
F(c)(\vec{u}) = f(c(\vec{u} + \vec{t}_1), \ldots , c(\vec{u} + \vec{t}_n)).
$$
A CA is \emph{additive} or \emph{linear} if its local rule is of the form
$$
f(x_1,\ldots,x_n) = a_1x_1 + \ldots + a_n x_n
$$
where $a_1,\ldots,a_n \in R$ are elements of some finite ring $R$ and $\A$ is an $R$-module.

In Section~\ref{perfect colorings} we consider the surjectivity of cellular automata and use a classic result called the \emph{Garden-of-Eden theorem} proved by Moore and Myhil that gives a characterization for surjectivity in terms of injectivity on ``finite'' configurations.
Two configurations $c_1$ and $c_2$ are called \emph{asymptotic} if the set $\text{diff}(c_1,c_2) = \{ \vec{u} \mid c_1(\vec{u}) \neq c_2(\vec{u}) \}$ of cells where they differ is finite.
A cellular automaton $F$ is \emph{pre-injective} if $F(c_1) \neq F(c_2)$ for any distinct asymptotic configurations $c_1$ and $c_2$.
Clearly injective CA are pre-injective.
The Garden-of-Eden theorem states that pre-injectivity of a CA is equivalent to surjectivity:

\begin{theorem*}[Garden-of-Eden theorem, \cite{Moore1962,Myhill1963}]
    A CA is surjective if and only if it is pre-injective.
\end{theorem*}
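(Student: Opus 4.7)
The plan is to prove both implications by contrapositive via a counting argument that compares the number of patterns on large cubes $B_n = [-n,n]^d$ before and after applying $F$. Fix $r$ large enough that the neighborhood vector $N$ is contained in $B_r$; then for any finite $E \subseteq \Z^d$ the image $F(c)|_E$ depends only on $c|_{E + B_r}$, giving a finite induced map $\A^{E + B_r} \to \A^{E}$. The key asymptotic observation underlying everything is that the boundary discrepancy $|B_{n+r}| - |B_n| = O(n^{d-1})$ is negligible compared to exponential factors of the form $\exp(-cn^d)$, so any constant gain per disjoint packed translate of a fixed shape will dominate boundary effects once $n$ is large.

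For the direction pre-injective $\Rightarrow$ surjective I argue contrapositively: suppose $F$ is not surjective. Then a compactness argument produces a finite pattern $q \in \A^{D_0}$ that appears in no image of $F$. Fix a reference configuration $c_0$ and consider the $|\A|^{|B_n|}$ configurations that agree with $c_0$ outside $B_n$; their images all agree with $F(c_0)$ outside $B_{n+r}$, so the image configurations can be identified with patterns on $B_{n+r}$, each of which must avoid $q$ on every translate of $D_0$ contained in the interior determined region. Packing that region with $k_n = \Theta(n^d)$ disjoint translates of $D_0$ gives at most $|\A|^{|B_{n+r}|}(1 - |\A|^{-|D_0|})^{k_n}$ admissible images. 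For $n$ large, this bound drops strictly below $|\A|^{|B_n|}$ by the boundary observation, so two distinct configurations differing only inside $B_n$ share the same $F$-image. These two configurations are asymptotic, contradicting pre-injectivity.

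For the converse, assume $F$ is not pre-injective: there exist distinct asymptotic configurations $c_1 \neq c_2$ with $F(c_1) = F(c_2)$. Let $D_0$ be a finite box containing the set where $c_1$ and $c_2$ differ. By translation invariance one may perform independent copies of this erasable swap anywhere the $B_r$-thickenings of the copies are pairwise disjoint, and in each such copy two distinct local patterns produce identical local images. Packing $B_n$ with $k_n = \Theta(n^d)$ such translates, the number of distinct image patterns on $B_{n-r}$ arising from inputs in $\A^{B_n}$ is at most $|\A|^{|B_n|} \cdot 2^{-k_n}$, which by the same asymptotic comparison is strictly less than $|\A|^{|B_{n-r}|}$ for $n$ large. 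Hence some pattern on $B_{n-r}$ is not extendable to a global image, and a standard compactness argument converts this finite obstruction into a genuine Garden-of-Eden pattern witnessing non-surjectivity of $F$.

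The main obstacle is the forward direction, specifically converting the single asymptotic collision $(c_1, c_2)$ into independent, simultaneously performable swaps in many packed copies. This requires choosing the packing so that the $B_r$-thickenings of the copies of $D_0$ are pairwise disjoint, costing only a constant factor in density and therefore preserving $k_n = \Theta(n^d)$; once this geometric arrangement is set up the counting is routine. The secondary subtlety in both directions is the standard passage from a finite counting obstruction to a global statement, which follows from the compactness of $\A^{\Z^d}$ in the prodiscrete topology.
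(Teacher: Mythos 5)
The paper does not actually prove this statement: it is quoted as the classical Moore--Myhill Garden-of-Eden theorem with citations, so there is no in-paper argument to compare against. Your proposal is the standard counting proof of that theorem and is essentially correct in structure: an orphan pattern obtained by compactness plus a packing count gives Myhill's direction, and an erasable pair plus a packing count gives Moore's direction, with the boundary term $|\A|^{O(n^{d-1})}$ losing to an exponential-in-$n^d$ gain in both cases.

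One intermediate bound in the Moore direction is not justified as written. From a single erasable pair you cannot conclude that the number of distinct image patterns arising from inputs in $\A^{B_n}$ is at most $|\A|^{|B_n|}\cdot 2^{-k_n}$: an input that carries neither of the two interchangeable patterns on a given packed translate admits no swap there, so the fibres of the input-to-image map need not all have size $2^{k_n}$ (indeed, inputs avoiding both patterns on every packed translate could a priori all have distinct images, and there are far more than $|\A|^{|B_n|}2^{-k_n}$ of them). The standard repair is to note that every attainable image is already attained by an input avoiding the first pattern $p_1$ on all $k_n$ packed thickened translates (perform the swaps one by one; disjointness of the thickenings guarantees the swaps do not interfere), so the number of distinct images is at most $|\A|^{|B_n|}\bigl(1-|\A|^{-|D_1|}\bigr)^{k_n}$ where $D_1$ is the thickened difference box. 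This weaker estimate still beats the boundary discrepancy for large $n$, so your conclusion stands. A second, smaller point: the erasability of a swap at a packed translate requires that the input agree with $c_1$ on the entire thickened box there, not merely on the difference set $D_0$; your packing condition handles the geometry, but the interchangeable patterns must be taken to live on the thickened box. The Myhill direction is stated with the correct bound and goes through as written.
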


\noindent
In the one-dimensional setting the Garden-of-Eden theorem yields the following corollary:

\begin{corollary*}
    For a one-dimensional surjective CA every configuration has only a finite number of pre-images.
\end{corollary*}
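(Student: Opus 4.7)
The plan is to argue by contradiction. Suppose $F$ is a surjective one-dimensional CA of radius $r$ (after translating the neighbourhood, I may take it to be $\{-r,\ldots,r\}$) and that some configuration $c \in \A^\Z$ has infinitely many preimages. By the Garden-of-Eden theorem $F$ is pre-injective, so it suffices to exhibit two distinct asymptotic preimages of $c$.

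The first step is to encode preimages in a finite state space. For $x \in \A^\Z$ and $i \in \Z$ let $s_i(x) = (x_{i-r+1}, \ldots, x_{i+r}) \in \A^{2r}$ be the \emph{state} of $x$ at position $i$. The equation $F(x) = c$ becomes a local constraint between consecutive states, so preimages of $c$ correspond bijectively to bi-infinite paths in a finite labelled digraph with vertex set $\A^{2r}$. The crucial gluing observation is that if two preimages $x, y$ of $c$ satisfy $s_i(x) = s_i(y)$ and $s_j(x) = s_j(y)$ for some $i < j$, then $x$ and $y$ agree on the windows $[i-r+1, i+r]$ and $[j-r+1, j+r]$, so the configuration
\[
z_k = \begin{cases} x_k & \text{if } k \leq i \text{ or } k > j, \\ y_k & \text{if } i < k \leq j \end{cases}
\]
is well-defined, and a direct check cell-by-cell shows $F(z) = c$. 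If furthermore $x \ne y$ somewhere in $(i, j]$, then $z$ is a preimage of $c$ distinct from $x$ that differs from $x$ only on the finite set $(i, j]$, yielding two distinct asymptotic preimages.

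Second, I would produce such a pair by combining compactness with pigeonhole on the finite state space. Since $F^{-1}(c)$ is closed and infinite in the compact space $\A^\Z$, it has an accumulation point $x^* \in F^{-1}(c)$ and hence a sequence of distinct preimages $x^{(n)} \to x^*$ with $x^{(n)}$ coinciding with $x^*$ on $[-n, n]$. Passing to a subsequence, I may assume each $x^{(n)}$ differs from $x^*$ somewhere in $(n, \infty)$. Iterating the infinite pigeonhole on $\A^{2r}$ over this family, I would fix a common value for the state at a chosen position $i$ to the left of a disagreement, and then thin further to fix a common value for the state at a position $j$ to its right. Any two distinct members of the resulting subfamily then furnish the required pair $(x, y)$.

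The main obstacle is this last extraction step. Compactness readily yields two preimages agreeing on an arbitrarily long central interval, but their disagreements are necessarily outside that interval, while the gluing requires a second matching-state position on the \emph{same} side as a disagreement. This is where the one-dimensional hypothesis is indispensable: a single element of the finite set $\A^{2r}$ captures the entire left/right interaction of a preimage, so the pigeonhole closes. In dimensions $\geq 2$ no such finite parametrisation is available, consistent with the fact that the corollary is genuinely one-dimensional.
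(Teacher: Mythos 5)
The paper states this corollary without proof, so I am judging your argument on its own terms. Your first two steps are sound and follow the standard route: the reduction to pre-injectivity via the Garden-of-Eden theorem, the encoding of preimages as bi-infinite paths over the state set $\A^{2r}$, and the gluing observation (two preimages sharing states at positions $i<j$ and disagreeing somewhere in $(i,j]$ splice into a pair of distinct asymptotic preimages of $c$) are all correct; I checked the cell-by-cell verification of $F(z)=c$ and it works.

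The gap is in the extraction step, and it is genuine. In your accumulation-point setup $x^{(n)}$ agrees with $x^*$ on $[-n,n]$, so for any \emph{fixed} position $j$ and any two indices $m,n\geq j$ the configurations $x^{(m)}$, $x^{(n)}$ and $x^*$ all agree on $(0,j]$. Hence after you ``thin further to fix a common value for the state at a position $j$'', any two surviving members of the subfamily agree on the whole window $(i,j]$ and the splice returns $z=x$: there is no disagreement inside the window to exploit. The disagreement positions $d_n>n$ escape to infinity precisely because you chose a convergent sequence, so no fixed $j$ ever lies to the right of a disagreement of a pair that still shares the state at $j$; the finitely many $x^{(n)}$ with $n<j$ do not suffice for the pigeonhole, and even when two of them share $s_j$ their mutual disagreement need not lie in $(0,j]$. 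The quantifiers must be reversed: take $K=|\A|^{4r}+1$ pairwise distinct preimages (these exist whenever there are infinitely many), choose a finite interval $[a,b]$ with $b-a\geq 2r$ on which they are pairwise distinct (possible since there are finitely many pairs), and only then pigeonhole on the pair of boundary words $\bigl(x|_{[a-2r,\,a-1]},\,x|_{[b+1,\,b+2r]}\bigr)$, of which there are only $|\A|^{4r}$ possible values. Two of the $K$ preimages share both boundary words, they disagree somewhere in $[a,b]$ by construction, and your gluing applies verbatim with $i=a-r-1$ and $j=b+r$. This repairs the argument and in fact gives the explicit bound $|\A|^{4r}$ on the number of preimages.
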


\subsection*{Graphs}

In this paper we consider graphs that are \emph{simple}, \emph{undirected} and \emph{connected}.
A graph $G$ that has vertex set $V$ and edge set $E$ is denoted by $G=(V,E)$.
The \emph{distance} $d(u,v)$ of two vertices $u\in V$ and $v\in V$ of a graph $G = (V,E)$ is the length of a shortest path between them in $G$.
The $r$-neighborhood of $u \in V$ in a graph $G = (V,E)$ is the set $N_r(u) = \{ v \in V \mid d(v,u) \leq r \}$.
The graphs we consider has vertex set $V = \Z^2$ and a translation invariant edge set $E \subseteq \{ \{ \vec{u}, \vec{v} \} \mid \vec{u}, \vec{v} \in \Z^2, \vec{u} \neq \vec{v} \}$.
This implies that for all $r$ and for any two points $\vec{u}\in \Z^2$ and $\vec{v} \in \Z^2$ their $r$-neighborhoods are the same up to translation, that is,
$N_r(\vec{u}) =N_r(\vec{v}) + \vec{u} - \vec{v}$.
Moreover, we assume that all the vertices of $G$ have only finitely many neighbors, {\it i.e.}, we assume that the \emph{degree} of $G$ is finite.
We call these graphs two-dimensional \emph{(infinite) grid graphs} or just \emph{(infinite) grids}.
In a grid graph $G$, let us call the $r$-neighborhood of $\vec{0}$ the \emph{relative $r$-neighborhood} of $G$ since it determines the $r$-neighborhood of any vertex in $G$.
Indeed, for all $\vec{u} \in \Z^2$ we have $N_r(\vec{u}) = N_r + \vec{u}$ where $N_r$ is the relative $r$-neighborhood of $G$.
Given the edge set of a grid graph, the relative $r$-neighborhood is determined for every $r$.
We specify three 2-dimensional infinite grid graphs:

\begin{itemize}
	\item The \emph{square grid} is the infinite grid graph
    $
    (\Z^2, E_{\mathcal{S}})$ with
    $$E_{\mathcal{S}} = \{ \{ \vec{u} , \vec{v} \} \mid \vec{u} - \vec{v} \in \{  (\pm 1,0), (0,\pm 1) \} \}.$$
	\item The \emph{triangular grid} is the infinite grid graph $
    (\Z^2, E_{\mathcal{T}})$ with
    $$
    E_{\mathcal{T}} = \{ \{ \vec{u}, \vec{v} \} \mid \vec{u} - \vec{v} \in \{ (\pm 1,0),(0,\pm 1),(1,1),(-1,-1) \}  \}.
    $$
    \item The \emph{king grid} is the infinite grid graph
    $
    (\Z^2, E_{\mathcal{K}})$ with
    $$E_{\mathcal{K}} = \{ \{ \vec{u}, \vec{v} \} \mid \vec{u} - \vec{v} \in \{ (\pm 1,0),(0,\pm 1),(\pm 1,\pm 1) \}  \}.$$
\end{itemize}
The relative $2$-neighborhoods of these grid graphs are pictured in Figure \ref{neighborhoods}.

\begin{figure}
    \centering
    \begin{tikzpicture}[scale=0.5]
        \draw (0,0) grid (6,6);
        \draw (8,0) grid (14,6);
        \draw (16,0) grid (22,6);

        \draw[fill=black] (3,3) circle(4pt);
        \draw[fill=black] (2,3) circle(4pt);
        \draw[fill=black] (1,3) circle(4pt);
        \draw[fill=black] (3,4) circle(4pt);
        \draw[fill=black] (3,5) circle(4pt);
        \draw[fill=black] (4,3) circle(4pt);
        \draw[fill=black] (5,3) circle(4pt);
        \draw[fill=black] (3,2) circle(4pt);
        \draw[fill=black] (3,1) circle(4pt);
        \draw[fill=black] (2,4) circle(4pt);
        \draw[fill=black] (4,4) circle(4pt);
        \draw[fill=black] (4,2) circle(4pt);
        \draw[fill=black] (2,2) circle(4pt);
        \draw[very thick] (0.5,3) -- (3,5.5) -- (5.5,3) -- (3,0.5) -- (0.5,3);

        \draw[fill=black] (11,3) circle(4pt);
        \draw[fill=black] (2+8,3) circle(4pt);
        \draw[fill=black] (1+8,3) circle(4pt);
        \draw[fill=black] (3+8,4) circle(4pt);
        \draw[fill=black] (3+8,5) circle(4pt);
        \draw[fill=black] (4+8,3) circle(4pt);
        \draw[fill=black] (5+8,3) circle(4pt);
        \draw[fill=black] (3+8,2) circle(4pt);
        \draw[fill=black] (3+8,1) circle(4pt);
        \draw[fill=black] (2+8,4) circle(4pt);
        \draw[fill=black] (4+8,4) circle(4pt);
        \draw[fill=black] (4+8,2) circle(4pt);
        \draw[fill=black] (2+8,2) circle(4pt);
        \draw[fill=black] (2,3) circle(4pt);
        \draw[fill=black] (12,5) circle(4pt);
        \draw[fill=black] (13,5) circle(4pt);
        \draw[fill=black] (13,4) circle(4pt);
        \draw[fill=black] (10,1) circle(4pt);
        \draw[fill=black] (9,1) circle(4pt);
        \draw[fill=black] (9,2) circle(4pt);
        \draw[very thick] (8.5,0.5) -- (8.5,3.5) -- (11,5.5) -- (13.5,5.5) -- (13.5,2.5) -- (11,0.5) -- (8.5,0.5);

        \draw[fill=black] (19,3) circle(4pt);
        \draw[fill=black] (2+8+8,3) circle(4pt);
        \draw[fill=black] (1+16,3) circle(4pt);
        \draw[fill=black] (3+16,4) circle(4pt);
        \draw[fill=black] (3+16,5) circle(4pt);
        \draw[fill=black] (4+8+8,3) circle(4pt);
        \draw[fill=black] (5+8+8,3) circle(4pt);
        \draw[fill=black] (3+8+8,2) circle(4pt);
        \draw[fill=black] (3+8+8,1) circle(4pt);
        \draw[fill=black] (2+8+8,4) circle(4pt);
        \draw[fill=black] (4+8+8,4) circle(4pt);
        \draw[fill=black] (4+8+8,2) circle(4pt);
        \draw[fill=black] (2+8+8,2) circle(4pt);
        \draw[fill=black] (2+8,3) circle(4pt);
        \draw[fill=black] (12+8,5) circle(4pt);
        \draw[fill=black] (13+8,5) circle(4pt);
        \draw[fill=black] (13+8,4) circle(4pt);
        \draw[fill=black] (10+8,1) circle(4pt);
        \draw[fill=black] (9+8,1) circle(4pt);
        \draw[fill=black] (9+8,2) circle(4pt);
        \draw[fill=black] (18,5) circle(4pt);
        \draw[fill=black] (17,5) circle(4pt);
        \draw[fill=black] (17,4) circle(4pt);
        \draw[fill=black] (20,1) circle(4pt);
        \draw[fill=black] (21,1) circle(4pt);
        \draw[fill=black] (21,2) circle(4pt);
        \draw[very thick] (16.5,0.5) -- (16.5,5.5) -- (21.5,5.5) -- (21.5,0.5) --(16.5,0.5);
    \end{tikzpicture}
    \caption{The relative $2$-neighborhoods of the square grid, the triangular grid and the king grid, respectively.}
    \label{neighborhoods}
\end{figure}
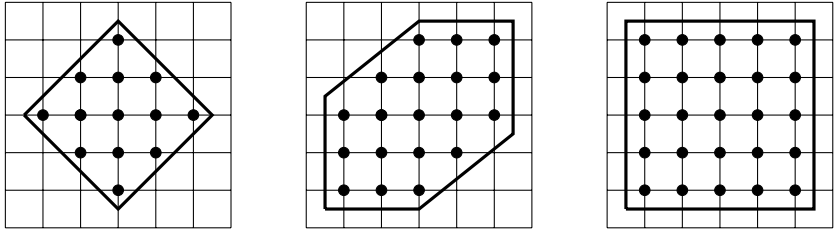


\subsection*{Perfect colorings}

Let $\A = \{a_1,\ldots,a_n\}$ be a finite alphabet of $n$ colors and let $D \subseteq \Z^d$ be a finite shape.
A configuration $c \in \A^{\Z^d}$ is a \emph{perfect coloring with respect to $D \subseteq \Z^d$} or a \emph{$D$-perfect coloring} if for all $i,j \in \{1,\ldots, n\}$ there exist numbers $b_{ij}$ such that for all $\vec{u} \in \Z^d$ with $c_{\vec{u}} = a_j$ the number of occurrences of color $a_i$ in the $D$-neighborhood of $\vec{u}$, {\it i.e.},
in the pattern $c|_{\vec{u} + D}$ is exactly $b_{ij}$.
The \emph{matrix of a $D$-perfect coloring} $c$ is the matrix $\mathbf{B} = (b_{ij})_{n \times n}$ where the numbers $b_{ij}$ are as above.
A $D$-perfect coloring with matrix $\mathbf{B}$ is called a (perfect) \emph{$(D,\vec{B})$-coloring}.
Any $D$-perfect coloring is called simply a perfect coloring.
In other words, a configuration is a perfect coloring if the number of cells of a given color in the given neighborhood of a vertex $\vec{u}$ depends only on the color of $\vec{u}$.

Perfect colorings are defined also for arbitrary graphs $G=(V,E)$.
Again, let $\A = \{a_1, \ldots , a_n \}$ be a finite set of $n$ colors.
A vertex coloring $\varphi \colon V \rightarrow \A$ of $G$ is an $r$-perfect coloring with matrix $\mathbf{B} = (b_{ij})_{n \times n}$ if the number of vertices of color $a_i$ in the $r$-neighborhood of a vertex of color $a_j$ is exactly $b_{ij}$.
Clearly if $G$ is a translation invariant graph with vertex set $\Z^d$, then the $r$-perfect colorings of $G$ are exactly the $D$-perfect colorings in $\A^{\Z^d}$ where $D$ is the relative $r$-neighborhood of the graph $G$.


\section{Algebraic concepts} \label{algebraic approach}

We review the basic concepts and some results relevant to us concerning an algebraic approach to multidimensional symbolic dynamics introduced and studied in \cite{icalp}. See also~\cite{surveyjarkko}
for a short survey of the topic.


Let $c \in \A^{\Z^d}$ be a $d$-dimensional configuration.
The power series presenting $c$ is the
formal power series
$$
c(X) = c(x_1,\ldots,x_d) = \sum_{\vec{u} = (u_1,\ldots,u_d) \in \Z^d} c_{\vec{u}} x_1^{u_1} \cdots x_d^{u_d} = \sum_{\vec{u} \in \Z^d} c_{\vec{u}} X^{\vec{u}}
$$
in $d$ variables $X = (x_1, \ldots , x_d)$.
We denote the set of all formal power series in $d$ variables $X = (x_1,\ldots ,x_d)$
over a domain $M$ by $M[[X^{\pm1}]] = M[[x_1^{\pm1},\ldots,x_d^{\pm1}]]$.
If $d=1$ or $d=2$, we denote $x=x_1$ and $y=x_2$.
A power series is \emph{finitary} if it has only finitely many distinct coefficients and \emph{integral} if its coefficients are all integers, {\it i.e.}, if it belongs to the set $\Z[[X^{\pm1}]]$.
A configuration is always presented by a finitary power series and a finitary power series always presents a configuration. So, from now on we may call any finitary power series a configuration.

We consider also Laurent polynomials which we may call simply just polynomials.
We denote the set of Laurent polynomials in $d$ variables $X=(x_1,\ldots,x_d)$ over a ring $R$ by $R[X^{\pm1}] = R[x_1^{\pm1}, \ldots, x_d^{\pm1}]$.
The term ``proper'' is used when we talk about proper ({\it i.e.}, non-Laurent) polynomials and denote the proper polynomial ring over $R$ by $R[X]$ as usual.

We say that two Laurent polynomials have no common factors if all their common factors are units in the polynomial ring under consideration and that they have a common factor if they have a non--unit common factor.
For example, in $\C[X^{\pm1}]$ two polynomials have no common factors if all their common factors are constants or monomials,
and
two proper polynomials in $\C[X]$
have no common factors if all their common factors are constants.
The \emph{support} of a power series $c = c(X) = \sum_{\vec{u}  \in \Z^d} c_{\vec{u}} X^{\vec{u}}$ is the set $\supp(c) = \{ \vec{u} \in \Z^d \mid c_{\vec{u}} \neq 0 \}$.
Clearly a polynomial is a power series with a finite support.
The $k$th dilation of a polynomial $f(X)$ is the polynomial $f(X^k)$.
See Figure \ref{dilation} for an illustration of dilations.

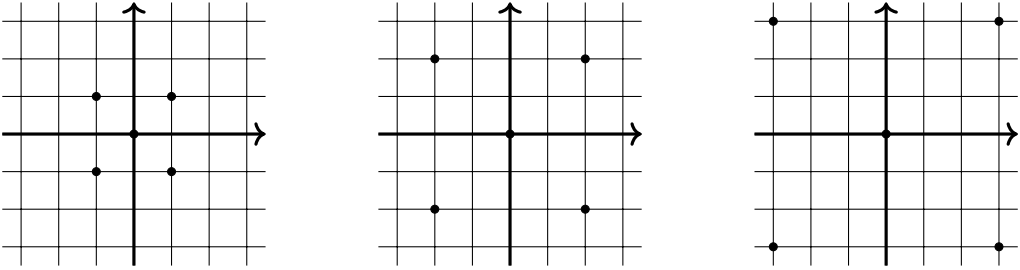
\begin{figure}[ht]
    \centering
    \begin{tikzpicture}[scale=0.5]
        \draw[fill=black] (-1,1) circle(3pt);
        \draw[fill=black] (-1,-1) circle(3pt);
        \draw[fill=black] (1,1) circle(3pt);
        \draw[fill=black] (1,-1) circle(3pt);
        \draw (16.5-20,-3.5) grid (23.5-20,3.5);
        \draw[very thick,->] (0,-3.5) -- (0,3.5);
        \draw[very thick,->] (-3.5,0) -- (3.5,0);
        \draw[fill=black] (0,0) circle(3pt);

        \draw[fill=black] (-2+10,2) circle(3pt);
        \draw[fill=black] (-2+10,-2) circle(3pt);
        \draw[fill=black] (2+10,2) circle(3pt);
        \draw[fill=black] (2+10,-2) circle(3pt);
        \draw (16.5-10,-3.5) grid (23.5-10,3.5);
        \draw[very thick,->] (0+10,-3.5) -- (0+10,3.5);
        \draw[very thick,->] (-3.5+10,0) -- (3.5+10,0);
        \draw[fill=black] (10,0) circle(3pt);

        \draw[fill=black] (-3+20,3) circle(3pt);
        \draw[fill=black] (-3+20,-3) circle(3pt);
        \draw[fill=black] (3+20,3) circle(3pt);
        \draw[fill=black] (3+20,-3) circle(3pt);
        \draw (16.5,-3.5) grid (23.5,3.5);
        \draw[very thick,->] (0+20,-3.5) -- (0+20,3.5);
        \draw[very thick,->] (-3.5+20,0) -- (3.5+20,0);
        \draw[fill=black] (20,0) circle(3pt);
    \end{tikzpicture}
    \caption{The supports of the polynomial $f(X) = 1 + x^{-1}y^{-1} + x^{-1}y^{1} + x^{1}y^{-1} + x^{1}y^{1}$ and its dilations $f(X^2)$ and $f(X^3)$.}
    \label{dilation}
\end{figure}

The $x_i$-resultant $\Res_{x_i}(f,g)$ of two proper polynomials $f,g \in R[x_1, \ldots , x_d]$
is
the determinant of the \emph{Sylvester matrix} of $f$ and $g$ with respect to variable $x_i$.
We omit the details
which the reader can check from \cite{cox}, and instead we consider the resultant $\Res_{x_i}(f,g) \in R[x_1,\ldots,x_{i-1},x_{i+1},\ldots,x_d]$ for every $i \in \{1,\ldots,d\}$
as a certain proper polynomial that has the following two properties:

\begin{itemize}
    \item $\Res_{x_i}(f,g)$ is in the ideal generated by $f$ and $g$, {\it i.e.}, there exist proper polynomials $h$ and $l$ such that
    $$
    hf + lg = \Res_{x_i}(f,g).
    $$
    \item If two proper polynomials $f$ and $g$ have no common factors in $R[x_1, \ldots , x_d]$, then $\Res_{x_i}(f,g) \neq 0$.
\end{itemize}

Let $R$ be a ring and $M$ a (left) $R$-module.
The formal product of a polynomial $f=f(X) = \sum_{i=1}^m a_i X^{\vec{u}_i} \in R[X^{\pm1}]$ and a power series $c=c(X) = \sum_{\vec{u} \in \Z^d} c_{\vec{u}} X^{\vec{u}} \in M[X^{\pm1}]$  is well-defined as the formal power series
$$
fc=f(X) c(X) = \sum_{\vec{u} \in \Z^d} (fc)_{\vec{u}} X^{\vec{u}} \in M[X^{\pm1}]
$$
where
$$
(fc)_{\vec{u}} = \sum_{i=1}^m a_i c_{\vec{u} - \vec{u}_i}.
$$
We say that a polynomial $f=f(X)$ \emph{annihilates} (or \emph{is an annihilator of}) a power series $c=c(X)$ if $fc=0$, that is, if their product is the zero power series.

In a typical setting, we assume that $\A \subseteq \Z$ and hence consider any configuration $c \in \A^{\Z^d}$ as a finitary and integral power series $c(X)$.
Since multiplying $c(X)$ by the monomial $X^{\vec{u}}$ produces the power series presenting the translation $\tau^{\vec{u}}(c)$ of $c$ by $\vec{u}$,
we have that $c$ is $\vec{u}$-periodic if and only if $c(X)$ is annihilated by the \emph{difference polynomial} $X^{\vec{u}}-1$.
(By a difference polynomial we mean a polynomial $X^{\vec{u}}-1$ for any $\vec{u}\neq 0$.)
This means that it is natural to consider multiplication of $c$ by polynomials in $\C[X^{\pm1}]$.
However, note that the product of $c$ and a polynomial $f \in \C[X^{\pm1}]$ may not be integral, but it is still finitary, hence a configuration.
We say that a polynomial $f$ \emph{periodizes} (or \emph{is a periodizer of}) a configuration $c$ if $fc$ is strongly periodic, that is, periodic in $d$ linearly independent directions.
We denote the set of all periodizers with complex coefficients of a configuration $c$ by $\Per(c)$ which is an ideal of $\C[X^{\pm1}]$ and hence we call it the \emph{periodizer ideal} of $c$.
Note that annihilators are periodizers.
Note also that if $c$ has a periodizer $f$, then $(X^{\vec{u}}-1)f$
is an annihilator of $c$ for some $\vec{u}$.
Thus,
$c$ has a non-trivial (= non-zero) annihilator if and only if it has a non-trivial periodizer.
The following theorem states that if a configuration has a non-trivial periodizer, then it has in fact an annihilator of a particular simple form -- a product of difference polynomials.

\begin{theorem}[\cite{icalp}]
    \label{special annihilator}
    Let $c \in \Z[[X^{\pm1}]]$ be a configuration in any dimension and assume that it has a non-trivial periodizer.
    Then there exist $m \geq 1$ and pairwise linearly independent vectors
    $\vec{t}_1, \ldots ,\vec{t}_m$ such that
    $$
    (X^{\vec{t}_1}-1) \cdots (X^{\vec{t}_m}-1)
    $$
    annihilates $c$.
\end{theorem}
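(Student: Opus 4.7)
The plan is to first convert the hypothesis into a non-trivial annihilator of $c$, and then refine that annihilator into a product of difference polynomials. Let $f$ be a non-trivial periodizer of $c$, so $\bar{c}:=fc$ is strongly periodic with some linearly independent periods $\vec{s}_1,\ldots,\vec{s}_d\in\Z^d$. For each $i$ we have $(X^{\vec{s}_i}-1)\bar{c}=0$, so multiplying all these relations together yields
\[
\Bigl(\prod_{i=1}^{d}(X^{\vec{s}_i}-1)\Bigr)\cdot f\cdot c \;=\; 0.
\]
Hence $c$ admits at least \emph{some} non-trivial annihilator, namely the polynomial $g_0$ above. The problem is that $g_0$ still carries the factor $f$, which in general is neither a difference polynomial nor a product of such.

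The heart of the proof is an inductive reduction. I would measure the complexity of an annihilator of $c$ by the number of its irreducible factors in $\C[X^{\pm 1}]$ that are not (associates of) difference polynomials, and show that this quantity can be driven to zero. In the inductive step, suppose $g = h\cdot g'$ annihilates $c$, where $h$ is an irreducible factor that is not a difference polynomial. Setting $c':=g'c$, which is a finitary (though no longer necessarily integer-valued) configuration, we have $hc'=0$. The key lemma needed is that \emph{any} irreducible polynomial in $\C[X^{\pm 1}]$ annihilating a non-zero finitary configuration must divide some difference polynomial $X^{\vec{t}}-1$. Given this lemma, one obtains $(X^{\vec{t}}-1)c'=0$ and hence the new annihilator $(X^{\vec{t}}-1)g'$ of $c$, in which $h$ has been traded for a difference polynomial and the complexity has strictly decreased. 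Iterating produces an annihilator of $c$ that is a product of difference polynomials $(X^{\vec{u}_1}-1)\cdots(X^{\vec{u}_k}-1)$.

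If two of the $\vec{u}_j$ are collinear, say both multiples of a primitive vector $\vec{t}$, then the corresponding collinear factors can be regarded as a univariate polynomial in $X^{\vec{t}}$. Using the cyclotomic factorization of $X^{n\vec{t}}-1$ together with a further application of the key lemma to each non-trivial cyclotomic piece (which again annihilates a suitable derived finitary configuration), the redundant collinear factors can be consolidated and replaced, when necessary, by difference polynomials in new, linearly independent directions. This yields an annihilator whose difference-polynomial factors are pairwise linearly independent, as required.

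The main obstacle I foresee is establishing the key lemma that an irreducible polynomial annihilating a non-zero finitary configuration must divide a difference polynomial. This is precisely where the hypothesis that $c$ takes integer (hence bounded) values is used, and the claim fails for arbitrary complex-valued power series. Proving it requires a careful analysis of how an irreducible algebraic relation constrains the coefficients of a power series it annihilates, and is essentially the principal algebraic tool developed in~\cite{icalp}. The remaining ingredients—the step from periodizer to annihilator, the inductive replacement, and the cyclotomic consolidation of collinear factors—are essentially bookkeeping once this lemma is in hand.
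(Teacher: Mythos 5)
First, a point of reference: the paper does not actually prove Theorem~\ref{special annihilator} --- it is imported verbatim from \cite{icalp} --- so the only meaningful comparison is with the proof given there. That proof exploits the integrality of $c$ arithmetically: one reduces modulo a large prime $p$ and uses the Frobenius identity $f^{p^k} \equiv f(X^{p^k}) \pmod{p}$ together with a pigeonhole argument on the finitely many residues; this is exactly the technique the present paper reuses in its proof of Lemma~\ref{apulemma2}. Your proposal instead works by irreducible factorization over $\C[X^{\pm1}]$, and it contains a genuine gap.

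The gap is that your key lemma is false as stated. The polynomial $h = x+y-2$ is irreducible in $\C[x^{\pm1},y^{\pm1}]$ and annihilates the non-zero, finitary, even integral configuration $\mathbbm{1}$, since $(h\mathbbm{1})_{\vec{u}} = 1+1-2 = 0$ for every cell (the same happens for any irreducible $h$ with $h(1,1)=0$). Yet $h$ divides no difference polynomial $X^{\vec{t}}-1 = x^ay^b-1$: that would force the line $\{x+y=2\}$ to lie inside $\{x^ay^b=1\}$, i.e.\ $t^a(2-t)^b=1$ identically, which fails for $(a,b)\neq(0,0)$. Hence the inductive step collapses: when $c'=g'c\neq 0$ and $hc'=0$, you cannot conclude that $h$ divides a difference polynomial, and such $c'$ arise readily (e.g.\ $g'c$ can be a non-zero two-periodic configuration). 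What you would actually need is the weaker claim that such a $c'$ is itself periodic, i.e.\ annihilated by some $X^{\vec{t}}-1$ even though $h$ divides none; but that is precisely the structure theory of $\Per(c)$, which in \cite{icalp} and \cite{fullproofs} is \emph{derived from} Theorem~\ref{special annihilator}, so invoking it here would be circular. There is also an internal tension you half-acknowledge: you apply the key lemma to $c'=g'c$, which need not be integral, while simultaneously locating the use of integrality inside the lemma's (omitted) proof. Finally, even on its own terms the proposal defers the only substantive step to \cite{icalp} without proof, so what remains is bookkeeping around a lemma that is both unproven and, as stated, untrue.
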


A \emph{line polynomial} is a polynomial whose support contains at least two points and the points of the support lie on a unique line.
In other words, a polynomial $f$ is a line polynomial if it is not a monomial
and there exist vectors $\vec{u}, \vec{v} \in \Z^d$ such that $\supp(f) \subseteq \vec{u} + \Q \vec{v}$.
In this case we say that $f$ is a line polynomial in direction $\vec{v}$.
We say that non-zero vectors $\vec{v},\vec{v}'\in\Z^d$ are \emph{parallel} if $\vec{v}'\in \Q \vec{v}$, and
clearly then a line polynomial in direction $\vec{v}$ is also a line polynomial in any parallel direction.
A vector $\vec{v}\in\Z^d$ is \emph{primitive} if its components are pairwise relatively prime. If
$\vec{v}$ is primitive, then $\Q \vec{v}\cap\Z^d = \Z \vec{v}$.
For any non-zero $\vec{v}\in\Z^d$ there exists a parallel primitive vector $\vec{v}'\in\Z^d$.
Thus, we may assume the vector $\vec{v}$ in the definition of a line polynomial $f$ to be primitive so that $\supp(f) \subseteq \vec{u} + \Z \vec{v}$.
In the following our preferred presentations of directions are in terms of primitive vectors.

Any line polynomial
$\phi$ in a (primitive) direction $\vec{v}$ can be written uniquely in the form
$$
\phi =
X^{\vec{u}}(a_0 + a_1 X^{\vec{v}} + \ldots + a_n X^{n \vec{v}}) = X^{\vec{u}}(a_0 + a_1 t + \ldots + a_n t^n)
$$
where $\vec{u} \in \Z^d, n \geq 1 , a_0 \neq 0 , a_n \neq 0$ and $t = X^{\vec{v}}$.
Let us call
the single variable proper polynomial $a_0 + a_1 t + \ldots + a_n t^n \in \C[t]$ the \emph{normal form} of $\phi$.
Moreover, for a monomial $a X^{\vec{u}}$ we define its normal form to be $a$.
So, two line polynomials in the direction $\vec{v}$
have the same normal form if and only if
they are the same polynomial up to multiplication by $X^{\vec{u}}$, for some $\vec{u}\in\Z^d$.

Difference polynomials are line polynomials and hence the annihilator provided by Theorem \ref{special annihilator} is a product of line polynomials.
Annihilation by a difference polynomial means periodicity.
More generally,
annihilation of a configuration $c$ by a line polynomial in a primitive direction $\vec{v}$
can be understood as the annihilation of the one-dimensional \emph{$\vec{v}$-fibers} $\sum_{k \in \Z} c_{\vec{u} + k \vec{v}} X^{\vec{u}+k\vec{v}}$ of $c$ in
direction $\vec{v}$, and since annihilation in the one-dimensional setting implies periodicity with a bounded period, we conclude that a configuration is periodic if and only if it is annihilated by a line polynomial.
It is known that if $c$ has a periodizer with line polynomial factors in at most one primitive direction, then $c$ is periodic:

\begin{theorem}[\cite{fullproofs}] \label{theorem on line polynomial factors}
    Let $c \in \Z[[x^{\pm1},y^{\pm1}]]$ be a two-dimensional configuration and let $f$ be a periodizer of $c$. Then the following conditions hold.
    \begin{itemize}
        \item If $f$ does not have any line polynomial factors, then $c$ is two-periodic.
        \item If all line polynomial factors of $f$ are in the same primitive direction, then $c$ is periodic in this direction.
    \end{itemize}
\end{theorem}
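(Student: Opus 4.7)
The plan is to combine the hypothesis that $fc$ is strongly periodic with the structure theorem for the periodizer ideal of a two-dimensional configuration established in \cite{fullproofs}.

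First, since $f$ periodizes $c$, the configuration $fc$ is strongly periodic, so there exist linearly independent primitive vectors $\vec{u}, \vec{v} \in \Z^2$ with $(X^{\vec{u}}-1)(X^{\vec{v}}-1) f c = 0$. Hence $A := (X^{\vec{u}}-1)(X^{\vec{v}}-1) f$ is a non-trivial annihilator of $c$, and in particular $c$ has a non-trivial periodizer, so Theorem~\ref{special annihilator} applies to $c$.

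Next, I would invoke the structure theorem for the periodizer ideal of a two-dimensional configuration from \cite{fullproofs}: since $c$ has a non-trivial annihilator, there exist line polynomials $\phi_1, \ldots, \phi_m$ in pairwise linearly independent primitive directions such that $\Per(c)$ is the principal ideal generated by $\phi_1 \cdots \phi_m$, with $m \in \{0, 1, 2\}$; moreover $c$ is two-periodic exactly when $m = 0$, one-periodic in the primitive direction of $\phi_1$ when $m = 1$, and non-periodic when $m = 2$. Since $f \in \Per(c)$, the generator $\phi_1 \cdots \phi_m$ divides $f$, so $f$ inherits line polynomial factors in $m$ pairwise linearly independent primitive directions.

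The two conclusions now follow by a short case analysis on $m$. If $f$ has no line polynomial factors, then forcibly $m = 0$, so $c$ is two-periodic. If all line polynomial factors of $f$ lie in a single primitive direction $\vec{w}$, then $m \leq 1$: the case $m = 0$ again yields two-periodicity (hence in particular periodicity in direction $\vec{w}$), while $m = 1$ forces the direction of $\phi_1$ to coincide with $\vec{w}$ (since $\phi_1 \mid f$), giving periodicity of $c$ in direction $\vec{w}$.

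The hard part is really the structure theorem for $\Per(c)$ itself, which is the substantive content and is quoted from \cite{fullproofs}; everything else is bookkeeping. A more self-contained route would start from Theorem~\ref{special annihilator} to extract an annihilator $\Phi = (X^{\vec{t}_1}-1)\cdots(X^{\vec{t}_m}-1)$ of $c$ with $m$ minimal, and then argue via resultants and the one-dimensional characterization of periodicity (annihilation by a line polynomial in direction $\vec{v}$ is equivalent to periodicity of each $\vec{v}$-fiber) that each $\vec{t}_i$-direction must appear among the line polynomial factors of every periodizer; but this essentially reproves the cited structure theorem and is technically delicate, which is why I would rely on the existing result.
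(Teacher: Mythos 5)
Your proposal is correct and follows essentially the same route as the paper's own (first) proof sketch: both reduce the statement to the structure theorem from \cite{fullproofs} that $\Per(c)$ is principal with generator a product of line polynomials in pairwise non-parallel directions, and then use that this generator divides $f$. The only cosmetic difference is that for the one-direction case you quote the order characterization ($m=1$ iff one-periodic) from \cite{fullproofs}, whereas the paper rederives that step directly by observing that $gc$ two-periodic gives the line-polynomial annihilator $(X^{k\vec{v}}-1)g$ of $c$; your closing remark about a resultant-based alternative is exactly the paper's second proof sketch.
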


\noindent
\emph{Proof sketch.}
The periodizer ideal $\Per(c) = \{ g \in \C[x^{\pm1}, y^{\pm1}] \mid gc \text{ is two-periodic} \}$
of $c$
is a principal ideal generated by a polynomial $g = \phi_1 \cdots \phi_m$  where $\phi_1, \ldots ,\phi_m$ are line polynomials in pairwise non-parallel directions~\cite{fullproofs}.
Because $f\in\Per(c)$, we know that $g$ divides $f$.
If $f$ does not have any line polynomial factors, then $g = 1$ and hence $c = gc$ is two-periodic.
If $f$ has line polynomial factors, and they are in the same primitive
direction $\vec{v}$, then $g$ is a line polynomial in this direction.
Since $gc$ is two-periodic, it is annihilated by $(X^{k \vec{v}} - 1)$ for some $k \in \Z$.
This implies that the configuration $c$ is annihilated by the line polynomial $(X^{k \vec{v}} - 1)g$
in direction $\vec{v}$.
We conclude that $c$ is periodic in direction $\vec{v}$.
\qed

\medskip


\noindent
The proof of the previous theorem sketched above relies heavily on the structure of the ideal $\Per(c)$ developed in \cite{icalp}.
We give an alternative proof sketch that mimics the usage of resultants in \cite{karimoutot}:

\medskip
\noindent
\emph{Second proof sketch of Theorem \ref{theorem on line polynomial factors}.}
The existence of a non-trivial periodizer $f$ implies
by Theorem~\ref{special annihilator} that $c$ has a special annihilator
$g=\phi_1 \cdots \phi_m$ that is a product of (difference) line polynomials $\phi_1, \ldots ,\phi_m$ in pairwise non-parallel
directions.
All irreducible factors of $g$ are line polynomials. If $f$ does not have any line polynomial factors, then
the periodizers $f$ and $g$ do not have common factors.
We can assume that both are proper polynomials as
they can be multiplied by a suitable monomial if needed.
Because $f,g\in\Per(c)$, also their resultant $\Res_x(f,g)\in \Per(c)$, implying that
$c$ has a non-trivial annihilator containing only variable $y$ since $\Res_x(f,g) \neq 0$ because $f$ and $g$ have no common factors.
This means that $c$ is periodic in the vertical direction.
Analogously, the \emph{$y$-resultant} $\Res_y(f,g)$ shows that $c$ is horizontally periodic, and hence two-periodic.

The proof for the case that $f$ has line polynomial factors only in one direction $\vec{v}$
goes analogously by considering $\phi c$
instead of $c$, where $\phi$ is the greatest common line polynomial factor of $f$ and $g$ in the direction $\vec{v}$.
We get that $\phi c$ is two-periodic, implying that $c$ is periodic in direction $\vec{v}$.
\qed
\bigskip

In this paper we also consider configurations over alphabets $\A$ that are finite subsets of $\Z^{n}$, that is, the set of length $n$ integer vectors, and hence study finitary formal power series from the set
$\Z^n[[X^{\pm1}]]$ for $n \geq 2$.
In particular, we call this kind of configurations \emph{integral vector configurations}.
Also in this setting we consider multiplication of power series by polynomials.
The coefficients of the polynomials are $n \times n$ integer matrices, {\it i.e.}, elements of the ring $\Z^{n \times n}$.
Since $\Z^n$ is a (left) $\Z^{n \times n}$-module where we consider the vectors of $\Z^n$ as column vectors, the product of a polynomial $f= f(X) \in \Z^{n\times n}[X^{\pm1}]$ and a power series $c = c(X) \in \Z^n[[X^{\pm1}]]$
is well-defined.
Consequently, we say that $c(X) \in \Z^n[[X^{\pm1}]]$ is $\vec{t}$-periodic if it is annihilated by the polynomial $\vec{I} X^{\vec{t}} - \vec{I}$ and that it is periodic if it is $\vec{t}$-periodic for some non-zero $\vec{t}$.

There is a natural way to present configurations over arbitrary alphabets as integral vector configurations.
Let $\A = \{ a_1, \ldots , a_n \}$ be a finite alphabet with $n$ elements.
The \emph{vector presentation} of a configuration $c \in \A^{\Z^d}$ is the configuration $c' \in \{ \vec{e}_1, \ldots , \vec{e}_n \} ^{\Z^d}$ (or the power series $c'(X) \in \Z^n[[X^{\pm1}]]$ presenting $c'$) defined such that $c'_{\vec{u}} = \vec{e}_i$ if and only if $c_{\vec{u}} = a_i$.
Here by $\vec{e}_i \in \Z^n$ we denote the $i$th natural base vector, {\it i.e.}, the vector whose $i$th component is 1 while all the other components are 0.
Clearly $c$ is $\vec{t}$-periodic if and only if its vector presentation is $\vec{t}$-periodic.
Thus, to study the periodicity of a configuration it is sufficient to study the periodicity of its vector presentation.

The $i$th \emph{layer} of $c = \sum \vec{c}_{\vec{u}} X^{\vec{u}} \in \Z^n[[X^{\pm1}]]$ is the power series
$$
\text{layer}_i(c) = \sum c_{\vec{u}}^{(i)} X^{\vec{u}} \in \Z[[X^{\pm1}]]
$$
where $c_{\vec{u}}^{(i)}$ is the $i$th component of $\vec{c}_{\vec{u}}$.
Clearly $c \in \Z^n[[X^{\pm1}]]$ is periodic in direction $\vec{v}$ if and only if for all $i \in \{1,\ldots,n\}$ the $i$th layer of $c$ is periodic in direction $\vec{v}$.

Finally, let $R$ be a finite ring and $\A$ a finite $R$-module.
A polynomial $f(X) = \sum_{i=1}^n a_i X^{-\vec{u}_i} \in R[x_1^{\pm 1}, \ldots , x_d^{\pm1}]$
defines an additive CA that
has neighborhood vector $(\vec{u}_1,\ldots,\vec{u}_n)$ and local rule
$f'(y_1,\ldots,y_n) = a_1y_1 + \ldots + a_ny_n$.
More precisely, the image of a configuration $c$ under the CA determined by $f$ is the configuration $fc$.


\section{Finding the line polynomial factors of a given two-variate Laurent polynomial} \label{line polynomial factors}

In this section we have $d=2$ and hence all our polynomials are in two variables $x$ and $y$.
The open and closed \emph{discrete half planes} determined by a non-zero vector $\vec{v} \in \Z^2$ are the sets $H_{\vec{v}} = \{ \vec{u} \in \Z^2 \mid \langle \vec{u} , \vec{v} ^{\perp} \rangle > 0 \}$ and $\overline{H}_{\vec{v}} = \{ \vec{u} \in \Z^2 \mid \langle \vec{u} , \vec{v} ^{\perp} \rangle \geq 0 \}$, respectively, where $\vec{v}^{\perp} = (v_2,-v_1)$ is orthogonal to $\vec{v} = (v_1,v_2)$.
Let us also denote by $l_{\vec{v}} = \overline{H}_{\vec{v}} \setminus H_{\vec{v}}$ the discrete line parallel to $\vec{v}$ that goes through the origin.
In other words, the half plane determined by $\vec{v}$ is the half plane ``to the right'' of the line $l_{\vec{v}}$ when moving along the line in the direction of $\vec{v}$.
We say that a finite set $D \subseteq \Z^2$ has an \emph{outer edge} in direction $\vec{v}$ if there exists a vector $\vec{t} \in \Z^2$ such that $D \subseteq \overline{H}_{\vec{v}} + \vec{t}$ and $|D \cap (l_{\vec{v}} + \vec{t})| \geq 2$.
We call $D \cap (l_{\vec{v}} + \vec{t})$ the outer edge of $D$ in direction $\vec{v}$.
An outer edge corresponding to $\vec{v}$ means that the convex hull of $D$  has an edge in direction $\vec{v}$
in the clockwise orientation around $D$.

If a finite non-empty set $D$ does not have an outer edge in direction $\vec{v}$, then there exists a vector $\vec{t} \in \Z^2$ such that $D \subseteq \overline{H}_{\vec{v}} + \vec{t}$ and $|D \cap (l_{\vec{v}} + \vec{t})| = 1$, and then we say that $D$ has a vertex in direction $\vec{v}$. We call $D \cap (l_{\vec{v}} + \vec{t})$ the vertex of $D$ in direction $\vec{v}$.
We say that a polynomial $f$ has an outer edge or a vertex in direction $\vec{v}$ if its support has an outer edge or a vertex in direction $\vec{v}$, respectively.
Note that every non-empty finite shape $D$ has either an edge or a vertex in any non-zero direction. Note also that in this context
directions $\vec{v}$ and $-\vec{v}$ are not the same: a shape may have an outer edge in direction $\vec{v}$
but no outer edge in direction $-\vec{v}$.
The following lemma shows that a polynomial can have line polynomial factors only in the directions of its outer edges.

\begin{lemma}[\cite{karimoutot}]
\label{lemma1}
    Let $f$ be a non-zero polynomial with a line polynomial factor in direction $\vec{v}$. Then $f$ has outer edges in directions $\vec{v}$ and $-\vec{v}$.
\end{lemma}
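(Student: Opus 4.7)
The plan is to factor $f = \phi g$ where $\phi$ is a line polynomial in direction $\vec{v}$, and then examine what happens on the extreme line of $\supp(f)$ perpendicular to $\vec{v}^\perp$. I will show the extreme lines on both sides each carry at least two points of $\supp(f)$, which is exactly the outer-edge condition in directions $\vec{v}$ and $-\vec{v}$.

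Write $\phi = X^{\vec{u}_0}(a_0 + a_1 t + \cdots + a_n t^n)$ with $t = X^{\vec{v}'}$, $\vec{v}'$ a primitive vector parallel to $\vec{v}$, $n\geq 1$, and $a_0,a_n\neq 0$. All monomials of $\phi$ share the same value of $\langle \cdot, \vec{v}^\perp\rangle$, namely $\alpha = \langle \vec{u}_0,\vec{v}^\perp\rangle$. Let $m = \min_{\vec{u}\in\supp(g)}\langle \vec{u},\vec{v}^\perp\rangle$, and let $S \subseteq \supp(g)$ be the points attaining this minimum; these points all lie on a single line parallel to $\vec{v}$. When forming the product $\phi g$, every monomial $X^{\vec{w}}$ appearing satisfies $\langle \vec{w},\vec{v}^\perp\rangle \geq m+\alpha$, and the only contributions to the coefficients on the extreme line $\langle \vec{w},\vec{v}^\perp\rangle = m + \alpha$ come from pairing a term of $\phi$ with a term of $g$ supported in $S$.

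The resulting restriction is a one-variable product: if $\tilde{g}(t) = b_0 + b_1 t + \cdots + b_k t^k$ is the normal form of the line polynomial (or monomial) carried by $S$, with $b_0,b_k\neq 0$, then the terms of $\phi g$ on the extreme line correspond exactly to the coefficients of $\tilde{\phi}(t)\tilde{g}(t)\in\C[t]$, up to a shift by a fixed monomial. This product is nonzero of degree $n+k$, with leading coefficient $a_n b_k\neq 0$ and constant coefficient $a_0 b_0\neq 0$. Since $n+k\geq n\geq 1$, these are two distinct nonzero coefficients, so at least two points of $\supp(f)$ lie on the extreme line. This is precisely the statement that $\supp(f)$ has an outer edge in direction $\vec{v}$ (with $\vec{t}$ any point attaining $\langle \vec{t},\vec{v}^\perp\rangle = m+\alpha$).

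Replacing $\min$ by $\max$ in the argument above gives, by the identical reasoning applied to the line on the opposite side of $\supp(g)$, an outer edge of $\supp(f)$ in direction $-\vec{v}$. The one potential pitfall --- cancellation on the extreme line --- is precisely what the one-variable factorization rules out, via the nonvanishing of the leading and trailing coefficients of $\tilde{\phi}\tilde{g}$; this is the only nontrivial step, and it is essentially the degree-$1$ case of Ostrowski's theorem that the Newton polytope of a product of polynomials over a field equals the Minkowski sum of the factors' Newton polytopes.
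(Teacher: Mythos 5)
Your argument is correct. Note that the paper does not actually reprove Lemma~\ref{lemma1} --- it quotes it from \cite{karimoutot} --- and the only proof it offers is the remark after Theorem~\ref{theorem1}: if $f$ had a vertex rather than an outer edge in direction $\vec{v}$ or $-\vec{v}$, the corresponding extreme $\vec{v}$-fiber would be a non-zero monomial, so $\mathcal{F}_{\vec{v}}(f)$ would contain a unit and its elements could have no common factor, contradicting the (easy direction of the) fiber characterization of line polynomial factors. Your proof is the same underlying mechanism, but unpacked and made self-contained: instead of invoking the fact that every $\vec{v}$-fiber of $\phi g$ is divisible by the normal form of $\phi$ (which the paper states but does not detail), you compute the two extreme fibers directly as $\tilde{\phi}\tilde{g}$ up to a monomial shift, and you make the crucial non-cancellation explicit through the nonvanishing of the trailing coefficient $a_0b_0$ and the leading coefficient $a_nb_k$, together with $\deg(\tilde{\phi}\tilde{g}) = n+k \geq 1$. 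This is exactly the degree-one slice of the Newton-polytope (Ostrowski) argument, as you note. What your route buys is a short, complete proof that needs nothing beyond the definitions; what the paper's route buys is economy, since Theorem~\ref{theorem1} is needed anyway for the algorithm of Section~\ref{line polynomial factors} and the lemma then comes for free. The one step worth stating a little more carefully is the identification of the extreme-line coefficients of $\phi g$ with the coefficients of $\tilde{\phi}\tilde{g}$: it rests on the additivity of $\vec{w} \mapsto \langle \vec{w}, \vec{v}^{\perp}\rangle$ on products of monomials, which is what guarantees that only the terms of $g$ supported in $S$ can contribute to the line $\langle \vec{w},\vec{v}^{\perp}\rangle = m+\alpha$; you assert this correctly, and it is the whole content of the "no cancellation from elsewhere" claim.
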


Let $\vec{v} \in \Z^2 \setminus \{ \vec{0} \}$ be a non-zero primitive vector
and let $f = \sum f_{\vec{u}} X^{\vec{u}}$ be a polynomial.
Recall that a \emph{$\vec{v}$-fiber} of $f$ is a polynomial
of the form $$\sum_{k \in \Z} f_{\vec{u} + k \vec{v}} X^{\vec{u} + k \vec{v}}$$ for some $\vec{u}\in\Z^2$.
Thus, a non-zero $\vec{v}$-fiber of a polynomial is either a line polynomial or a monomial.
Let us denote by $\mathcal{F}_{\vec{v}}(f)$ the set of different normal forms of all non-zero
$\vec{v}$-fibers of a polynomial $f$, which is hence a finite set of one-variate proper polynomials.
The following simple example illustrates the concept of fibers and their normal forms.

\begin{example}
    Let us determine the set $\mathcal{F}_{\vec{v}}(f)$ for $f = f(X) = f(x,y) = 3x + y + xy^2 +xy + x^3y^3 + x^4y^4$
    and $\vec{v} = (1,1)$.
    By grouping the terms we can write
    $$
    f = 3x + y(1 + xy) + xy(1 + x^2y^2 + x^3y^3)
    = X^{(1,0)} \cdot 3 + X^{(0,1)}(1 + t) + X^{(1,1)}(1 + t^2 + t^3)
    $$
    where $t = X^{(1,1)} = xy$.
    Hence,
    $\mathcal{F}_{\vec{v}}(f) =
    \{ 3, 1 + t, 1 + t^2 + t^3 \}$.
    See Figure \ref{havainnollistus} for a pictorial illustration.
    \qed
\end{example}

\begin{figure}[ht]
	\centering
	\begin{tikzpicture}[scale=0.6]
	    \draw (-0.5,-0.5) grid (4.5,4.5);
	    \draw[very thick, ->] (0,-0.5) -- (0,4.5);
	    \draw[very thick, ->] (-0.5,0) -- (4.5,0);
	
		\draw[red, fill=red] (1,0) circle(3pt);
		\node at (1,0.3) {\tiny $3x$};
		
		\draw[blue, fill=blue] (0,1) circle(3pt);
		\node at (0.1,1.3) {\tiny $y$};
		
		\draw[blue,fill=blue] (1,2) circle(3pt);
		\node at (1,2.35) {\tiny $xy^2$};
		
		\draw[green,fill=green] (1,1) circle(3pt);
		\node at (0.9,1.4) {\tiny $xy$};
		
		\draw[green,fill=green] (3,3) circle(3pt);
		\node at (2.9,3.4) {\tiny $x^3y^3$};
		
		\draw[green,fill=green] (4,4) circle(3pt);
		\node at (3.9,4.4) {\tiny $x^4y^4$};
		
		\draw[blue] (-0.5,0.5) -- (1.5,2.5);
		
		\draw[green] (0.5,0.5) -- (4.5,4.5);
		
		\draw[red] (0.5,-0.5) -- (1.5,0.5);
	\end{tikzpicture}
	\caption{The support of $f = 3x + y + xy^2 +xy + x^3y^3 + x^4y^4$ and its different $(1,1)$-fibers.}
	\label{havainnollistus}
\end{figure}
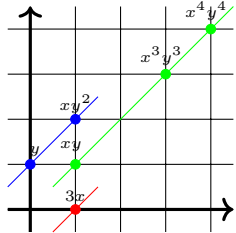

\noindent
As noticed in the example above, polynomials are linear combinations of their fibers: for any polynomial $f$ and any non-zero primitive vector $\vec{v}$ we can write
$$
f = X^{\vec{u}_1} \psi_1 + \ldots + X^{\vec{u}_n} \psi_n
$$
for some $\vec{u}_1, \ldots , \vec{u}_n \in \Z^2$ where $\psi_1, \ldots , \psi_n\in \mathcal{F}_{\vec{v}}(f)$.
We use this in the proof of the next theorem.

\begin{theorem} \label{theorem1}
	A polynomial $f$ has a line polynomial factor in direction $\vec{v}$ if and only if the polynomials in $\mathcal{F}_{\vec{v}}(f)$ have a common factor.
\end{theorem}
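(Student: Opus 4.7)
The plan is to exploit the primitivity of $\vec{v}$ to reduce to the case where $\vec{v}$ is a coordinate direction. Choose $\vec{w}\in\Z^2$ such that $\{\vec{v},\vec{w}\}$ is a $\Z$-basis of $\Z^2$; such $\vec{w}$ exists precisely because $\vec{v}$ is primitive. Setting $t=X^{\vec{v}}$ and $s=X^{\vec{w}}$ yields a ring isomorphism $\C[x^{\pm1},y^{\pm1}] \cong \C[t^{\pm1},s^{\pm1}]$ under which the $\vec{v}$-fibers of $f$ correspond to the coefficients when $f$ is expanded as $f = \sum_i f_i(t) s^i$. A line polynomial in direction $\vec{v}$ becomes, up to a monomial, a non-constant polynomial in $t$ alone, and $\mathcal{F}_{\vec{v}}(f)$ is exactly the set of normal forms $\tilde f_i(t)$ obtained from $f_i(t) = t^{a_i}\tilde f_i(t)$ with $\tilde f_i(0)\neq 0$.

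For the ``if'' direction, suppose the elements of $\mathcal{F}_{\vec{v}}(f)$ share a common non-unit factor $\psi(t)\in\C[t]$, so $\deg\psi\geq 1$. Since $\psi(0)\neq 0$ (as $\psi$ divides a polynomial with non-zero constant term), $\psi$ and $t^{a_i}$ are coprime in $\C[t^{\pm1}]$, so $\psi$ divides each $f_i(t)$, and therefore $\psi$ divides $f$ in $\C[t^{\pm1},s^{\pm1}]$. Translating back, $\psi(X^{\vec{v}})$ is a line polynomial in direction $\vec{v}$ dividing $f$.

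For the ``only if'' direction, suppose $\phi$ is a line polynomial factor of $f$ in direction $\vec{v}$. After multiplying by a monomial we may assume $\phi=\phi(t)\in\C[t]$ with $\phi(0)\neq 0$ and $\deg\phi\geq 1$. Writing $f=\phi(t)\cdot g$ and expanding $g=\sum_i g_i(t)\, s^i$, we get $f_i(t)=\phi(t)g_i(t)$ for every $i$. Hence $\phi$ divides each $f_i$, and since $\phi$ is coprime to the monomial $t^{a_i}$, in fact $\phi$ divides each normal form $\tilde f_i$ in $\C[t]$. Thus $\phi$ is a common non-unit factor of $\mathcal{F}_{\vec{v}}(f)$.

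The main obstacle is bookkeeping rather than mathematics: one must verify that the substitution $t=X^{\vec{v}}$, $s=X^{\vec{w}}$ really identifies $\vec{v}$-fibers of $f$ with the $s$-coefficients of its image, and that passing between $f_i(t)$ and its normal form $\tilde f_i(t)$ only costs a unit factor $t^{a_i}$ in $\C[t^{\pm1}]$. Once this dictionary is in place, the theorem becomes the elementary statement that a polynomial in a single variable $t$ divides a polynomial in $t$ and $s$ if and only if it divides every coefficient in the $s$-expansion.
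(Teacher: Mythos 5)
Your proof is correct and follows essentially the same route as the paper's: both directions reduce to the observation that $f$ decomposes as a sum of (monomials times) its $\vec{v}$-fibers, so a line polynomial in direction $\vec{v}$ divides $f$ exactly when it divides every non-zero fiber, i.e.\ every normal form in $\mathcal{F}_{\vec{v}}(f)$. Your explicit unimodular change of coordinates $t=X^{\vec{v}}$, $s=X^{\vec{w}}$ simply makes rigorous the fiber/normal-form bookkeeping that the paper's terse proof leaves implicit.
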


\begin{proof}
For any line polynomial $\phi$ in direction $\vec{v}$, and for any polynomial $g$, the $\vec{v}$-fibers of the
product $\phi g$ have a common factor $\phi$.
In other words, if a polynomial $f$ has a line polynomial factor $\phi$ in direction $\vec{v}$, then the polynomials in $\mathcal{F}_{\vec{v}}(f)$ have the normal form of $\phi$ as a common factor.

For the converse direction, assume that
the polynomials in $\mathcal{F}_{\vec{v}}(f)$ have a common factor $\phi$.
Then there exist vectors $\vec{u}_1, \ldots , \vec{u}_n \in \Z^2$ and polynomials $\phi\psi_1, \ldots , \phi\psi_n \in\mathcal{F}_{\vec{v}}(f)$ such that
$$
f = X^{\vec{u}_1} \phi \psi_1 + \ldots + X^{\vec{u}_n} \phi \psi_n.
$$
Hence,
$\phi$ is a line polynomial factor of $f$ in direction $\vec{v}$.
\end{proof}

\noindent
Note that Lemma~\ref{lemma1} actually follows immediately from Theorem~\ref{theorem1}: A vertex instead of an outer edge
in direction $\vec{v}$ or $-\vec{v}$ provides a non-zero monomial $\vec{v}$-fiber, which implies that
the polynomials in $\mathcal{F}_{\vec{v}}(f)$ have no common factors.

So, to find out the line polynomial factors of $f$ we first
need to find out the possible directions of the line polynomials, that is, the directions of the (finitely many)
outer edges of $f$, and then we need to check for which of these possible directions $\vec{v}$ the polynomials
in $\mathcal{F}_{\vec{v}}(f)$ have a common factor.
There are clearly algorithms to find the outer edges of a given polynomial
and to determine whether finitely many line polynomials have a common factor.
If such a factor exists, then by Theorem \ref{theorem1} the polynomial $f$ has a line polynomial factor in this direction.
We have proved the following theorem.

\begin{theorem}
    There is an algorithm to find the line polynomial factors of a given (Laurent) polynomial in two variables.
\end{theorem}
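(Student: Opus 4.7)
The plan is to combine the structural results already proved in this section: Lemma~\ref{lemma1} restricts the possible directions of line polynomial factors to finitely many candidates, and Theorem~\ref{theorem1} reduces the existence (and computation) of a line polynomial factor in a fixed direction to a univariate gcd computation. So the algorithm is a direct procedure, and the task is essentially to verify that each step is effectively computable.

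First, I would handle the Laurent aspect by noting that $\vec{v}$-fibers are defined intrinsically on the support, regardless of a shift by a monomial; equivalently, multiplying $f$ by $X^{\vec{u}}$ does not change the set of normal forms $\mathcal{F}_{\vec{v}}(f)$. Hence we may assume $f\in\C[x,y]$ is a proper polynomial (with coefficients in some computable subfield, e.g.\ $\Q$ or a finitely generated algebraic extension), and it suffices to list the \emph{normal forms} of all non-trivial line polynomial factors up to monomial multiples.

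Next, I would compute the candidate directions. The support $\supp(f)$ is a finite set, so its convex hull can be constructed by a standard planar convex hull algorithm; this yields the finitely many primitive vectors $\vec{v}\in\Z^2$ such that $f$ has an outer edge in direction $\vec{v}$. By Lemma~\ref{lemma1}, any direction of a line polynomial factor must occur among these (and, more restrictively, must occur for both $\vec{v}$ and $-\vec{v}$, but either condition suffices as a finite filter). For each such primitive $\vec{v}$, the $\vec{v}$-fibers partition $\supp(f)$ into its intersections with the lines $\vec{u}+\Q\vec{v}$, each of which can be read off from $f$ in finite time, and their normal forms give a finite set $\mathcal{F}_{\vec{v}}(f)\subseteq\C[t]$ of univariate polynomials.

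Finally, I would apply the Euclidean algorithm to $\mathcal{F}_{\vec{v}}(f)$ to compute $\phi_{\vec{v}}:=\gcd\mathcal{F}_{\vec{v}}(f)\in\C[t]$. By Theorem~\ref{theorem1}, $f$ has a line polynomial factor in direction $\vec{v}$ if and only if $\deg \phi_{\vec{v}}\ge 1$, and in that case substituting $t=X^{\vec{v}}$ produces a line polynomial in direction $\vec{v}$ whose normal form is $\phi_{\vec{v}}$; moreover, every line polynomial factor of $f$ in direction $\vec{v}$ (up to a monomial) divides this one, so iterating over the finitely many candidate $\vec{v}$ gives a complete list of normal forms of line polynomial factors of $f$.

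The only potential obstacle is the computability of gcds in $\C[t]$. This is not a real obstacle in practice: given any input polynomial over a computable field $K\subseteq\C$, the fibers and their normal forms lie in $K[t]$, and the Euclidean algorithm terminates in $K[t]$. Thus the procedure above is a genuine algorithm, which proves the theorem.
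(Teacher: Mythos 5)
Your proposal is correct and follows essentially the same route as the paper: use Lemma~\ref{lemma1} (via the outer edges of the convex hull of the support) to restrict to finitely many candidate directions, then apply Theorem~\ref{theorem1} and a univariate gcd computation on $\mathcal{F}_{\vec{v}}(f)$ for each candidate. Your added remarks on reducing to a proper polynomial and on computability of gcds over a computable subfield of $\C$ only make explicit what the paper leaves implicit.
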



\section{Forced periodicity of perfect colorings with two colors}
\label{perfect coverings}


In this section we consider forced periodicity of two-dimensional perfect colorings with only two colors.
Without loss of generality we may assume that $\A = \{a_1, a_2 \} = \{ 0 , 1 \}$ ($a_1 =0,a_2=1$) and consider perfect colorings $c \in \A^{\Z^2}$ since the names of the colors do not matter in our considerations.
So, let $c \in \{0,1\}^{\Z^2}$ be a perfect coloring with respect to $D \subseteq \Z^2$ and let $\mathbf{B} = (b_{ij})_{2 \times 2}$ be the matrix of $c$.
Let us define
a set
$C = \{ \vec{u} \in \Z^2 \mid c_{\vec{u}} = 1 \}$.
This set has the property that the neighborhood $\vec{u} + D$ of a point $\vec{u}$ contains exactly $a = b_{21}$ points of color $1$ if $\vec{u} \not \in C$ and exactly $b = b_{22}$ points of color $1$ if $\vec{u} \in C$.
In fact, $C$ is a \emph{perfect (multiple) covering} of the infinite grid $G$ determined by the relative neighborhood $D$.
More precisely, the set $C$ is a (perfect) \emph{$(D,b,a)$-covering} of $G$.
This is a variant of the following definition: in any graph a subset $C$ of its vertex set is an \emph{$(r,b,a)$-covering} if the number of vertices of $C$ in the $r$-neighborhood of a vertex $u$ is $a$ if $u \not \in C$ and $b$ if $u \in C$.
See \cite{Axenovich} for a reference.
Clearly in translation invariant graphs the $(r,b,a)$-coverings correspond to $(D,b,a)$-coverings where $D$ is the relative $r$-neighborhood of the graph.
Thus, it is natural to call any perfect coloring with only two colors a perfect covering.
Note that a $(D,b,a)$-covering is a $D$-perfect coloring with the matrix
$$
\mathbf{B} =
\begin{pmatrix}
|D| - a & |D| - b \\
a & b
\end{pmatrix}.
$$

The following theorem by Axenovich states that ``almost every'' $(1,b,a)$-covering in the square grid is two-periodic.

\begin{theorem}[\cite{Axenovich}] \label{squaregrid1}
    If $b-a \neq 1$, then every $(1,b,a)$-covering in the square grid is two-periodic.
\end{theorem}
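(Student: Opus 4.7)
The plan is to recast the covering condition as a polynomial identity and then apply the line-polynomial-factor machinery of Section~\ref{algebraic approach}. Let $c\in\{0,1\}^{\Z^2}$ be the indicator of the covering $C$, and let $f_D = 1+x+x^{-1}+y+y^{-1}$ be the Laurent polynomial corresponding to the symmetric relative $1$-neighborhood of the square grid. The defining property of a $(1,b,a)$-covering says $(f_D c)_{\vec{u}} = a + (b-a)\,c_\vec{u}$ at every cell $\vec{u}$, which is the single power-series identity
\[
(f_D - (b-a))\,c \;=\; a\,\mathbf{1},
\]
where $\mathbf{1} = \sum_{\vec{u}\in\Z^2} X^\vec{u}$. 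Because $(x-1)\mathbf{1} = (y-1)\mathbf{1} = 0$, writing $g := f_D-(b-a)$ yields two annihilators of $c$, namely $(x-1)g$ and $(y-1)g$.

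The plan is now to show that $g$ itself has no line polynomial factors. Once this is established, Theorem~\ref{theorem on line polynomial factors} finishes the job: the only line polynomial factor of the periodizer $(x-1)g$ is $x-1$, forcing $c$ to be periodic in direction $(1,0)$, and the symmetric use of $(y-1)g$ forces periodicity in direction $(0,1)$, hence two-periodicity.

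To prove that $g$ has no line polynomial factors, multiply through by $xy$ to obtain a proper polynomial whose support is $\{(0,1),(1,0),(1,2),(2,1),(1,1)\}$. The hypothesis $b-a\neq 1$ is used precisely here: it ensures $1-(b-a)\neq 0$, so the central monomial at $(1,1)$ survives. The convex hull of this support is a rhombus with corners $(0,1),(1,0),(2,1),(1,2)$ and interior point $(1,1)$, so by Lemma~\ref{lemma1} the only candidate directions for a line polynomial factor are the two diagonals $(1,1)$ and $(1,-1)$. Apply Theorem~\ref{theorem1} in each direction by partitioning the support into fibers: in both diagonal directions the two edge fibers normalize to the same polynomial $1+t$, while the singleton fiber through $(1,1)$ normalizes to the non-zero constant $1-(b-a)$. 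A non-zero constant shares no non-unit factor with anything, so $\mathcal{F}_\vec{v}(xyg)$ has no common factor in either diagonal direction, and $g$ has no line polynomial factors.

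The main obstacle is precisely the verification of the ``no common factor'' condition along both diagonals; this is a short finite calculation, but it is where the hypothesis $b-a\neq 1$ is indispensable. Indeed, if $b-a=1$, the interior monomial vanishes and $xyg = (x+y)(xy+1)$ factors into line polynomials in both diagonal directions, so the argument collapses — which is consistent with the fact that the conclusion of the theorem is known to fail in that case.
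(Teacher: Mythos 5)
Your proof is correct and follows essentially the same route as the paper: the identity $(f_D-(b-a))c=a\mathbbm{1}$, Lemma~\ref{lemma1} to restrict candidate directions to the diagonals, and the fiber computation $\mathcal{F}_{\vec{v}}(g)=\{1+t,\,1-(b-a)\}$ via Theorem~\ref{theorem1}, concluding with Theorem~\ref{theorem on line polynomial factors}. The only cosmetic difference is that you pass through the annihilators $(x-1)g$ and $(y-1)g$, whereas the paper notes directly that $g$ is a periodizer (since $a\mathbbm{1}$ is two-periodic) and applies Corollary~\ref{corollary1}.
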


\noindent
For a finite set $D \subseteq \Z^2$ we define its \emph{characteristic polynomial} to be the polynomial $f_D(X) = \sum _{\vec{u} \in D} X^{-\vec{u}}$.
We denote by $\mathbbm{1}(X)$
the constant power series $\sum_{\vec{u} \in \Z^2} X^{\vec{u}}$.
If $c \in \{0,1\}^{\Z^2}$ is a $(D,b,a)$-covering, then from the definition we get
that
$
f_D(X)c(X) = (b-a)c(X) + a \mathbbm{1}(X)
$
which is equivalent
to
$\left (f_D(X) - (b-a) \right )c(X) = a \mathbbm{1}(X)$.
Thus, if $c$ is a $(D,b,a)$-covering, then $f_D(X) - (b-a)$ is a periodizer of $c$.
Hence, by Theorem \ref{theorem on line polynomial factors} the condition that the polynomial
$f_D(X) - (b-a)$
has no line polynomial factors is a sufficient condition for forced periodicity of a $(D,b,a)$-covering.
Hence, we have the following corollary of Theorem \ref{theorem on line polynomial factors}:

\begin{corollary} \label{corollary1}
    Let $D \subseteq \Z^2$ be a finite shape and let $b$ and $b$ be non-negative integers.
    If $g=f_D - (b-a)$ has no line polynomial factors, then every $(D,b,a)$-covering is two-periodic.
\end{corollary}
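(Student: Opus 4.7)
The plan is to argue that the hypothesis ``$g = f_D - (b-a)$ has no line polynomial factors'' gives us exactly the input required to apply Theorem \ref{theorem on line polynomial factors}, and the conclusion (two-periodicity) falls out immediately. The setup in the paragraphs just before the corollary has essentially done all of the work; I only need to assemble the pieces.

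First I would fix a $(D,b,a)$-covering $c \in \{0,1\}^{\Z^2}$ and unpack the definition of a perfect covering in terms of the associated power series $c(X)$. By counting the $1$'s in the translate $\vec{u}+D$ of the neighborhood, separately for the case $c_{\vec{u}}=0$ and the case $c_{\vec{u}}=1$, one obtains the identity
\[
f_D(X)\,c(X) \;=\; (b-a)\,c(X) + a\,\mathbbm{1}(X),
\]
which rearranges to $g(X)\,c(X) = a\,\mathbbm{1}(X)$. The right-hand side is a constant power series and is therefore strongly periodic (indeed it is $\vec{t}$-periodic for every $\vec{t}\in\Z^2$). Consequently $g$ is a periodizer of $c$, i.e.\ $g\in\Per(c)$.

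Now I would invoke Theorem \ref{theorem on line polynomial factors} with the periodizer $f:=g$. Its first bullet states that if a periodizer of a two-dimensional configuration has no line polynomial factors, then the configuration is two-periodic. Since by hypothesis $g$ has no line polynomial factors, we conclude that $c$ is two-periodic, as required.

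There is no real obstacle in this proof: the algebraic translation from ``perfect covering'' to an explicit periodizer has already been carried out in the paragraph preceding the corollary, and Theorem \ref{theorem on line polynomial factors} is precisely tailored to convert the absence of line polynomial factors of a periodizer into two-periodicity of the configuration. The only small point worth spelling out is that $a\,\mathbbm{1}(X)$ genuinely qualifies as strongly periodic even in the degenerate case $a=0$ (where it is the zero power series and thus $g$ is an annihilator of $c$), so the hypothesis $f\in\Per(c)$ of Theorem \ref{theorem on line polynomial factors} holds in every case.
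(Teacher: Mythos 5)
Your proof is correct and follows exactly the paper's route: derive the identity $\bigl(f_D - (b-a)\bigr)c = a\,\mathbbm{1}$ to show $g$ is a periodizer of $c$, then apply the first bullet of Theorem \ref{theorem on line polynomial factors}. The remark about the degenerate case $a=0$ is a harmless extra precaution; the argument matches the paper's own.
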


\noindent
Using our formulation and the algebraic approach we get a simple proof for Theorem \ref{squaregrid1}:

\setcounter{reformulation}{6}
\begin{reformulation}
    Let $D$ be the relative 1-neighborhood of the square grid and assume that $b-a \neq 1$.
    Then every $(D,b,a)$-covering is two-periodic.
\end{reformulation}

\begin{proof}
	Let $c$ be an arbitrary $(D,b,a)$-covering.
    The outer edges of $g = f_D - (b-a) = x^{-1} + y^{-1} + 1 - (b-a) + x + y$ are in directions $(1,1),(-1,-1),(1,-1)$ and $(-1,1)$ and hence
    by Lemma~\ref{lemma1}
    any line polynomial factor of $g$ is either in direction $(1,1)$ or $(1,-1)$.
    For $\vec{v} \in \{(1,1),(1,-1)\}$
    we have $\mathcal{F}_{\vec{v}}(g) = \{ 1+t,1 -(b-a) \}$.
    See Figure \ref{Illustrations} for an illustration.
    Since $1 -(b-a)$ is a non-trivial monomial,
    by Theorem \ref{theorem1} the periodizer $g \in \Per(c)$ has no line polynomial factors and hence the claim follows by corollary \ref{corollary1}.
\end{proof}

\noindent
We also get a similar proof for the following known result concerning the forced periodicity perfect coverings in the square grid with radius $r \geq 2$.

\begin{theorem}[\cite{puzynina2}] \label{squaregrid2}
    Let $r \geq 2$ and let $D$ be the relative $r$-neighborhood of the square grid.
    Then every $(D,b,a)$-covering is two-periodic.
    In other words, all $(r,b,a)$-coverings in the square grid are two-periodic for all $r \geq 2$.
\end{theorem}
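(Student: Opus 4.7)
The plan is to mimic the Reformulation of Theorem 6 via Corollary \ref{corollary1}: for an arbitrary $(D,b,a)$-covering $c$, I will show that its periodizer $g = f_D - (b-a)$ has no line polynomial factors. When $D$ is the relative $r$-neighborhood of the square grid, $D = \{(i,j) \in \Z^2 : |i| + |j| \leq r\}$ is the $\ell_1$-ball of radius $r$, and $g$ has the same support as $f_D$ (the origin is already an interior point of $D$). Thus the convex hull of $\supp(g)$ is a rotated square whose outer edges lie in exactly the two primitive directions $(1,1)$ and $(1,-1)$, so by Lemma \ref{lemma1} any line polynomial factor of $g$ must be in one of these two directions.

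To rule out a factor in direction $(1,1)$, I will examine the set $\mathcal{F}_{(1,1)}(g)$. The $(1,1)$-fibers of $g$ are indexed by the value of $s = j - i$ with $|s| \leq r$. The crucial point is that since $r \geq 2$, the boundary-adjacent fibers at $s = r$ and $s = r-1$ are both non-trivial, and neither contains the origin, so neither is affected by the shift $-(b-a)$ that distinguishes $g$ from $f_D$. A direct inspection with $t = X^{(1,1)} = xy$ gives their normal forms as
$$
1 + t + \cdots + t^r \quad \text{and} \quad 1 + t + \cdots + t^{r-1}.
$$
The heart of the proof is then to verify that these are coprime in $\C[t]$. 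Since $(t-1)(1 + t + \cdots + t^k) = t^{k+1}-1$, any common factor must divide $\gcd(t^{r+1}-1,\, t^r-1) = t-1$; but $t=1$ is a root of neither polynomial (they evaluate to $r+1$ and $r$ respectively), so the gcd is a unit. Hence by Theorem \ref{theorem1}, $g$ has no line polynomial factor in direction $(1,1)$.

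The reflection $(i,j) \mapsto (i,-j)$ preserves $D$ and swaps the two candidate primitive directions, so the same argument applies in direction $(1,-1)$. Thus $g$ has no line polynomial factors at all, and Corollary \ref{corollary1} forces $c$ to be two-periodic.

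The only substantive computation is the coprimality of the two boundary fibers, and this is precisely what distinguishes the case $r \geq 2$ from $r=1$: in the Reformulation of Theorem 6 the fibers available in direction $(1,1)$ were $1+t$ and the constant $1-(b-a)$, the latter of which degenerates to $0$ exactly when $b-a=1$, forcing that hypothesis. For $r \geq 2$ the pair of boundary fibers avoids the origin entirely, so their normal forms are independent of $a$ and $b$, and no arithmetic restriction on $b-a$ is needed.
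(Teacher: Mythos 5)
Your proof is correct and follows essentially the same route as the paper's: restrict the candidate directions to $(1,1)$ and $(1,-1)$ via Lemma~\ref{lemma1}, exhibit the two fibers with normal forms $1+t+\cdots+t^{r}$ and $1+t+\cdots+t^{r-1}$ in $\mathcal{F}_{\vec{v}}(g)$, and conclude they are coprime (the paper subtracts them to get the monomial $t^{r}$, you go through $\gcd(t^{r+1}-1,\,t^{r}-1)$ --- both are fine). The only nitpick is that when $b-a=1$ the support of $g$ is $D\setminus\{\vec{0}\}$ rather than $D$, but since the origin is interior to the diamond this does not change the outer edges, so your argument is unaffected.
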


\begin{proof}
    Let $c$ be an arbitrary $(D,b,a)$-covering.
    By Lemma~\ref{lemma1} any line polynomial factor of $g = f_D -(b-a)$
    has direction
    $(1,1)$ or $(1,-1)$.
    So, assume that $\vec{v} \in \{ (1,1), (1,-1) \}$.
    We have $\phi_1=1+t+\ldots+t^r \in \mathcal{F}_{\vec{v}}(g)$ and
    $\phi_2=1+t+\ldots+t^{r-1} \in \mathcal{F}_{\vec{v}}(g)$.
    See Figure \ref{Illustrations} for an illustration in the case $r=2$.
    Since $\phi_1-\phi_2=t^r$, the polynomials
    $\phi_1$ and $\phi_2$ have no common factors, and hence by Theorem \ref{theorem1} the periodizer
    $g$ has no line polynomial factors.
    Corollary \ref{corollary1} gives the claim.
\end{proof}

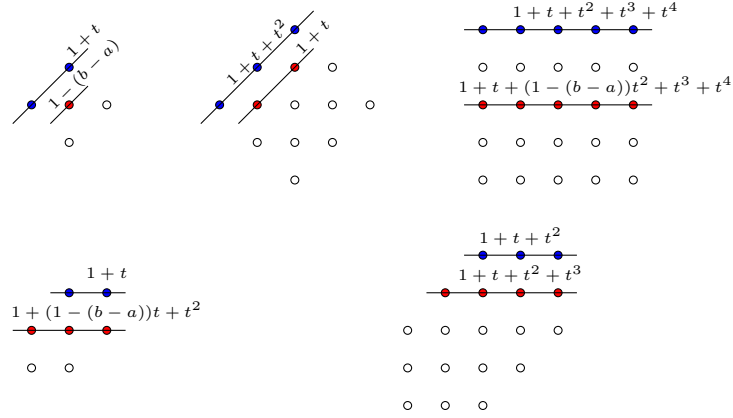
\begin{figure}
	\centering
	\begin{tikzpicture}[scale=0.5]
		\draw[fill=red] (0,0) circle(3pt);
		\draw[fill=blue] (-1,0) circle(3pt);
		\draw[] (0,-1) circle(3pt);
		\draw[] (1,0) circle(3pt);
		\draw[fill=blue] (0,1) circle(3pt);
		
		\draw (-1.5,-0.5) -- (0.5,1.5);
		\draw (-0.5,-0.5) -- (0.5,0.5);
		\node[rotate=45] at (0.4,1.7) {\tiny $1+t$};	
		\node[rotate=45] at (0.4,0.8) {\tiny $1-(b-a)$};	
		
		

	
		\draw[] (6,0) circle(3pt);
		\draw[fill=red] (5,0) circle(3pt);
		\draw[] (6,-1) circle(3pt);
		\draw[] (7,0) circle(3pt);
		\draw[fill=red] (6,1) circle(3pt);
		\draw[fill=blue] (6,2) circle(3pt);
		\draw[] (6,-2) circle(3pt);
		\draw[] (7,1) circle(3pt);
		\draw[] (7,-1) circle(3pt);
		\draw[fill=blue] (5,1) circle(3pt);
		\draw[] (5,-1) circle(3pt);
		\draw[fill=blue] (4,0) circle(3pt);
		\draw[] (8,0) circle(3pt);
		
		\draw (3.5,-0.5) -- (6.5,2.5);
		\draw (4.5,-0.5) -- (6.5,1.5);
		
		\node[rotate=45] at (5,1.4) {\tiny $1+t+t^2$};
		\node[rotate=45] at (6.5,1.8) {\tiny $1+t$};

		
		
		\draw[fill=red] (13,0) circle(3pt);
		\draw[fill=red] (12,0) circle(3pt);
		\draw[] (13,-1) circle(3pt);
		\draw[fill=red] (14,0) circle(3pt);
		\draw[] (13,1) circle(3pt);
		\draw[fill=blue] (13,2) circle(3pt);
		\draw[] (13,-2) circle(3pt);
		\draw[] (14,1) circle(3pt);
		\draw[] (14,-1) circle(3pt);
		\draw[] (12,1) circle(3pt);
		\draw[] (12,-1) circle(3pt);
		\draw[fill=red] (11,0) circle(3pt);
		\draw[fill=red] (15,0) circle(3pt);
		\draw[] (15,1) circle(3pt);
		\draw[fill=blue] (15,2) circle(3pt);
		\draw[] (15,-1) circle(3pt);
		\draw[] (15,-2) circle(3pt);
		\draw[] (11,1) circle(3pt);
		\draw[fill=blue] (11,2) circle(3pt);
		\draw[] (11,-1) circle(3pt);
		\draw[] (11,-2) circle(3pt);
		\draw[fill=blue] (12,2) circle(3pt);
		\draw[] (12,-2) circle(3pt);
		\draw[fill=blue] (14,2) circle(3pt);
		\draw[] (14,-2) circle(3pt);
		
		\draw (10.5,2) -- (15.5,2);
		\draw (10.5,0) -- (15.5,0);
		\node at (14,2.5) {\tiny $1+t+t^2+t^3+t^4$};
		\node at (14,0.5) {\tiny $1+t+(1-(b-a))t^2+t^3+t^4$};

		\draw[fill=red] (0,-6) circle(3pt);
		\draw[fill=red] (-1,-6) circle(3pt);
		\draw[] (0,-7) circle(3pt);
		\draw[fill=red] (1,-6) circle(3pt);
		\draw[fill=blue] (0,-5) circle(3pt);
		\draw[fill=blue] (1,-5) circle(3pt);
		\draw[] (-1,-7) circle(3pt);
	
		\draw (-0.5,-5) -- (1.5,-5);
		\draw (-1.5,-6) -- (1.5,-6);
		\node at (1,-4.5) {\tiny $1+t$};
		\node at (1,-5.5) {\tiny $1+(1-(b-a))t+t^2$};
	
	
		\draw[] (11,-6) circle(3pt);
		\draw[] (10,-6) circle(3pt);
		\draw[] (11,-7) circle(3pt);
		\draw[] (12,-6) circle(3pt);
		\draw[fill=red] (11,-5) circle(3pt);
		\draw[fill=blue] (11,-4) circle(3pt);
		\draw[] (11,-8) circle(3pt);
		\draw[fill=red] (12,-5) circle(3pt);
		\draw[] (12,-7) circle(3pt);
		\draw[fill=red] (10,-5) circle(3pt);
		\draw[] (10,-7) circle(3pt);
		\draw[] (9,-6) circle(3pt);
		\draw[] (13,-6) circle(3pt);
		\draw[] (9,-7) circle(3pt);
		\draw[] (9,-8) circle(3pt);
		\draw[] (10,-8) circle(3pt);
		\draw[fill=blue] (12,-4) circle(3pt);
		\draw[fill=blue] (13,-4) circle(3pt);
		\draw[fill=red] (13,-5) circle(3pt);
		
		\draw (10.5,-4) -- (13.5,-4);
		\draw (9.5,-5) -- (13.5,-5);
		
		\node at (12,-3.5) {\tiny $1+t+t^2$};
		\node at (12,-4.5) {\tiny $1+t+t^2+t^3$};
	\end{tikzpicture}
	\caption{Pictorial illustrations for the proofs of Theorems \ref{squaregrid1}, \ref{squaregrid2}, \ref{triangulargrid1}, \ref{triangulargrid2} and \ref{kinggrid}.
	The constellation on the left of the upper row illustrates
	the proof of Theorem \ref{squaregrid1}.
	The constellation in the center of the upper row illustrates the proof of Theorem \ref{squaregrid2} with $r=2$.
	The constellation on the right of the upper row illustrates the proof of Theorem \ref{kinggrid} with $r=2$.
	The constellation on the left of the lower row illustrates the proof of Theorem \ref{triangulargrid1}.
	The constellation on the right of the lower row illustrates the proof of Theorem \ref{triangulargrid2} with $r=2$.
	In each of the constellations we have pointed out two normal forms with no common factors in $\mathcal{F}_{\vec{v}}(g)$ from the points of $\supp(g)$ for one of the outer edges $\vec{v}$ of $\supp(g)$.
	}
    \label{Illustrations}
\end{figure}

\noindent
There are analogous results in the triangular grid, and we can prove them similarly using Corollary \ref{corollary1}.

\begin{theorem}[\cite{puzynina2}] \label{triangulargrid1}
    Let $D$ be the relative 1-neighborhood of the triangular grid and assume that
    $b - a \neq -1$.
    Then every $(D,b,a)$-covering in the triangular grid is two-periodic.
    In other words, all $(1,b,a)$-coverings in the triangular grid are two-periodic whenever $b-a \neq -1$.
\end{theorem}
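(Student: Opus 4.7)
The plan is to apply Corollary \ref{corollary1} to the periodizer $g = f_D - (b-a)$, where $D$ is the relative $1$-neighborhood of the triangular grid. Explicitly,
\[
g(x,y) = \bigl(1 - (b-a)\bigr) + x + x^{-1} + y + y^{-1} + xy + x^{-1}y^{-1}.
\]
The support of $g$ is a hexagon with vertices $(\pm 1, 0), (0, \pm 1), (1,1), (-1,-1)$, so its outer edges lie in the six primitive directions $\pm(1,0), \pm(0,1), \pm(1,1)$. By Lemma \ref{lemma1}, any line polynomial factor of $g$ must lie in one of the three (unordered) directions $(1,0)$, $(0,1)$, $(1,1)$, so it suffices to rule these out via Theorem \ref{theorem1}.

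For each of these three primitive directions $\vec{v}$ I will compute $\mathcal{F}_{\vec{v}}(g)$. In direction $\vec{v} = (1,0)$, the non-zero fibers sit on the three horizontal lines $y=-1, 0, 1$: the lines $y = \pm 1$ each contribute two adjacent points with normal form $1+t$, while the line $y=0$ contributes the three points $(-1,0), (0,0), (1,0)$ with normal form $1 + (1-(b-a))t + t^2$. Thus
\[
\mathcal{F}_{(1,0)}(g) = \{\, 1+t,\ 1 + (1-(b-a))t + t^2 \,\}.
\]
The same two normal forms appear for $\vec{v} = (0,1)$ by the symmetry $x \leftrightarrow y$ of $g$, and also for $\vec{v} = (1,1)$ with the parameter $t = xy$: the two off-diagonal fibers $\{(0,1),(-1,0)\}$ and $\{(1,0),(0,-1)\}$ yield $1+t$, while the main diagonal fiber $\{(-1,-1),(0,0),(1,1)\}$ yields $1 + (1-(b-a))t + t^2$. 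A picture of this situation is included in Figure \ref{Illustrations}.

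The key coprimality check is then that $1+t$ and $1 + (1-(b-a))t + t^2$ share no non-unit factor whenever $b-a \neq -1$. Since $1+t$ is irreducible of degree one, it is the only possible common factor, and $1+t$ divides $1 + (1-(b-a))t + t^2$ if and only if substituting $t = -1$ gives zero, i.e.\ $1 - (1-(b-a)) + 1 = 1 + (b-a) = 0$, which is precisely the excluded case $b-a = -1$. Under the hypothesis $b-a \neq -1$, Theorem \ref{theorem1} therefore shows that $g$ has no line polynomial factors, and Corollary \ref{corollary1} (equivalently, the first bullet of Theorem \ref{theorem on line polynomial factors}) gives that every $(D,b,a)$-covering is two-periodic.

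I do not expect a serious obstacle: the proof is a direct parallel of the square-grid argument, just with a hexagonal support. The only bookkeeping to be careful about is the enumeration of the outer-edge directions of the hexagon and the correct reading of the normal form for the diagonal fiber in direction $(1,1)$, since here the central coefficient $1-(b-a)$ sits in the middle rather than at an endpoint.
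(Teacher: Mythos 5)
Your proposal is correct and follows essentially the same route as the paper: identify the six outer-edge directions of the hexagonal support, compute $\mathcal{F}_{\vec{v}}(g)=\{1+t,\ 1+(1-(b-a))t+t^2\}$ for $\vec{v}\in\{(1,0),(0,1),(1,1)\}$, rule out a common factor when $b-a\neq -1$, and conclude via Theorem \ref{theorem1} and Corollary \ref{corollary1}. The only cosmetic difference is the coprimality check: you evaluate the quadratic at $t=-1$ using irreducibility of $1+t$, while the paper forms the combination $(1+t)^2-\bigl(1+(1-(b-a))t+t^2\bigr)=(1+b-a)t$ and invokes the weak Nullstellensatz; these are interchangeable.
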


\begin{proof}
    Let $c$ be an arbitrary $(D,b,a)$-covering.
    The outer edges of $g = f_D -(b-a) = x^{-1}y^{-1} + x^{-1} + y^{-1} + 1-(b-a) + x + y + xy$ have directions
    $(1,1),(-1,-1),(1,0),(-1,0)$, $(0,1)$ and $(0,-1)$
    and hence by Lemma~\ref{lemma1} any line polynomial factor of $g$ has direction $(1,1)$, $(1,0)$ or $(0,1)$.
    So, let $\vec{v} \in \{ (1,1), (1,0),(0,1) \}$.
    We have
    $\mathcal{F}_{\vec{v}}(g) = \{ 1+t, 1+ (1 - (b-a))t + t^2 \}$.
    See Figure \ref{Illustrations} for an illustration.
    Polynomials $\phi_1=1+t$ and $\phi_2=1+ (1 - (b-a))t + t^2$ satisfy $\phi_1^2-\phi_2= (1+b-a)t$.
    Thus,
    they do not have any common factors if $b-a \neq -1$ and hence by Theorem \ref{theorem1} the polynomial
    $g$ has no line polynomial factors.
    The claim follows by Corollary \ref{corollary1}.
\end{proof}

\begin{theorem}[\cite{puzynina2}] \label{triangulargrid2}
    Let $r \geq 2$ and let $D$ be the relative $r$-neighborhood of the triangular grid.
    Then every $(D,b,a)$-covering is two-periodic.
    In other words, every $(r,b,a)$-covering in the triangular grid is two-periodic for all $r \geq 2$.
\end{theorem}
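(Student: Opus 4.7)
The plan is to follow the template used in Theorems \ref{squaregrid2} and \ref{triangulargrid1}: for an arbitrary $(D,b,a)$-covering $c$, the polynomial $g = f_D - (b-a)$ lies in $\Per(c)$, so by Corollary \ref{corollary1} it suffices to show that $g$ has no line polynomial factors; then $c$ is two-periodic.

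The relative $r$-neighborhood of the triangular grid is the hexagonal set $D = \{(x,y) \in \Z^2 : |x|,|y|,|x-y| \le r\}$. Its convex hull is a hexagon whose edges lie in the six directions $\pm(1,0), \pm(0,1), \pm(1,1)$. Since $g$ has the same support as $f_D$ (the subtraction of $b-a$ only perturbs the coefficient at the origin, which is an interior point of $D$ for $r \ge 1$), Lemma \ref{lemma1} tells me that any line polynomial factor of $g$ must lie in one of the three primitive directions $\vec{v} \in \{(1,0),(0,1),(1,1)\}$.

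For each such $\vec{v}$ I will exhibit two elements of $\mathcal{F}_{\vec{v}}(g)$ that are coprime in $\C[t]$; Theorem \ref{theorem1} will then rule out line polynomial factors in that direction. For $\vec{v} = (1,0)$ take the horizontal fibers at $y = r$ and $y = r-1$, which contain $r+1$ and $r+2$ consecutive lattice points respectively. Because $r \ge 2$, neither fiber passes through the origin, so their normal forms in $g$ coincide with those in $f_D$, namely $\phi_1 = 1 + t + \cdots + t^r$ and $\phi_2 = 1 + t + \cdots + t^{r+1}$. Since $\phi_2 - \phi_1 = t^{r+1}$ and $\phi_1$ has non-zero constant term, $\phi_1$ and $\phi_2$ share no non-constant factor in $\C[t]$. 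Exactly analogous pairs of fibers work for the other two directions: for $\vec{v} = (0,1)$ the vertical fibers at $x = r$ and $x = r-1$, and for $\vec{v} = (1,1)$ the diagonal fibers along $x - y = r$ and $x - y = r-1$; in each case the chosen fibers avoid the origin (again because $r \ge 2$), so the $-(b-a)$ perturbation is irrelevant and the same pair $\phi_1, \phi_2$ appears.

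The only subtle point is the need to steer the two convenient fibers away from the origin, which is what forces us to pick the two outermost parallel lines of $D$ rather than any shorter pair, and which is precisely where the hypothesis $r \ge 2$ enters. This is also the reason why Theorem \ref{triangulargrid1} (where only the middle fiber provides the natural second candidate) requires the extra assumption $b - a \ne -1$ to guarantee coprimality, whereas here the conclusion is unconditional.
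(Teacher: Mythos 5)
Your proposal is correct and follows essentially the same route as the paper: restrict the candidate directions via Lemma \ref{lemma1}, exhibit two fibers of $g$ with normal forms $1+t+\cdots+t^{n}$ and $1+t+\cdots+t^{n+1}$ (you make explicit that $n=r$ and that the chosen fibers avoid the origin, which the paper leaves implicit), conclude coprimality, and apply Theorem \ref{theorem1} and Corollary \ref{corollary1}. No gaps.
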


\begin{proof}
    Let $c$ be an arbitrary $(D,b,a)$-covering.
    The outer edges
    of $g = f_D -(b-a)$ have directions
    $(1,1)$, $(-1,-1)$, $(1,0)$, $(-1,0)$, $(0,1)$ and $(0,-1)$,
    and hence by Lemma~\ref{lemma1} any line polynomial factor of $g$
    has direction $(1,1)$, $(1,0)$ or $(0,1)$.
    So, let $\vec{v} \in \{ (1,1), (1,0),(0,1) \}$.
    There exists $n \geq 1$ such that $1+t+ \ldots + t^n \in \mathcal{F}_{\vec{v}}(g)$ and $1+t+ \ldots + t^{n+1} \in \mathcal{F}_{\vec{v}}(g)$.
    See Figure \ref{Illustrations} for an illustration with $r=2$.
    Since these two polynomials have no common factors, by Theorem \ref{theorem1} the polynomial
    $g$ has no line polynomial factors.
    Again, Corollary \ref{corollary1} yields the claim.
\end{proof}

\noindent
If $a \neq b$, then for all $r \geq 1$ any $(r,b,a)$-covering in the king grid is two-periodic:

\begin{theorem} \label{kinggrid}
    Let $r \geq 1$ be arbitrary and let $D$ be the relative $r$-neighborhood of the king grid and assume that $a \neq b$.
    Then any $(D,b,a)$-covering is two-periodic.
    In other words, all $(r,b,a)$-coverings in the king grid are two-periodic whenever $a \neq b$.
\end{theorem}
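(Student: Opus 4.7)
The plan is to follow the same template as the proofs of Theorems \ref{squaregrid1}, \ref{squaregrid2}, \ref{triangulargrid1} and \ref{triangulargrid2}: given an arbitrary $(D,b,a)$-covering $c$, produce the periodizer $g = f_D - (b-a) \in \Per(c)$, show that $g$ has no line polynomial factors, and then conclude two-periodicity of $c$ via Corollary \ref{corollary1}. As suggested by the rightmost constellation in the upper row of Figure \ref{Illustrations}, the argument is by direct inspection of the fibers.

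First, observe that the relative $r$-neighborhood of the king grid is the axis-aligned square $D = \{-r,\ldots,r\} \times \{-r,\ldots,r\}$. Its convex hull is a square with sides parallel to the coordinate axes, so the outer edges of $\supp(g) = D$ occur only in the four directions $(\pm 1,0)$ and $(0,\pm 1)$. By Lemma \ref{lemma1} any line polynomial factor of $g$ must therefore lie in primitive direction $\vec{v} = (1,0)$ or $\vec{v} = (0,1)$.

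Next, I compute $\mathcal{F}_{\vec{v}}(g)$ for $\vec{v} = (1,0)$; the case $\vec{v} = (0,1)$ is symmetric. Writing $t = x$, the $(1,0)$-fibers at heights $j \neq 0$ all have normal form
$$
\phi_1 = 1 + t + t^2 + \cdots + t^{2r},
$$
whereas the single fiber at height $j = 0$ has normal form
$$
\phi_2 = 1 + t + \cdots + t^{r-1} + \bigl(1-(b-a)\bigr) t^r + t^{r+1} + \cdots + t^{2r}.
$$
Hence $\mathcal{F}_{\vec{v}}(g) = \{\phi_1,\phi_2\}$. Their difference is $\phi_1 - \phi_2 = (b-a) t^r$, which is a non-zero monomial since $a \neq b$. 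Consequently $\phi_1$ and $\phi_2$ share no non-unit, non-monomial common factor, so by Theorem \ref{theorem1} the polynomial $g$ has no line polynomial factor in direction $\vec{v}$. Combining both coordinate directions, $g$ has no line polynomial factors at all, and Corollary \ref{corollary1} gives that $c$ is two-periodic.

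The only potential obstacle is the single exceptional fiber through the origin, where the coefficient $1-(b-a)$ replaces the otherwise uniform $1$; the hypothesis $a \neq b$ is precisely what is needed to make $\phi_1 - \phi_2$ a monomial (and in particular non-zero). If one had $a = b$ then $\phi_1 = \phi_2$, every fiber would share the common factor $\phi_1$, and $g$ would have a line polynomial factor in both axis directions, so the method breaks down exactly at the boundary of the claimed hypothesis. Thus the argument is tight and the proof reduces to the short computation above.
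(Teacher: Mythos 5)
Your proposal is correct and follows essentially the same route as the paper's own proof: identify the outer edge directions $(\pm1,0),(0,\pm1)$ via Lemma \ref{lemma1}, exhibit the two normal forms $1+t+\cdots+t^{2r}$ and $1+\cdots+(1-(b-a))t^r+\cdots+t^{2r}$ in $\mathcal{F}_{\vec{v}}(g)$, note their difference is the non-trivial monomial $(b-a)t^r$, and conclude via Theorem \ref{theorem1} and Corollary \ref{corollary1}. The extra remarks on the sharpness of the hypothesis $a\neq b$ are accurate but not needed.
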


\begin{proof}
    Let $c$ be an arbitrary $(D,b,a)$-covering.
    The outer edges of $g = f_D -(b-a)$
    are in directions $(1,0),(-1,0),(0,1)$ and $(0,-1)$.
    Hence,
    by Lemma~\ref{lemma1} any line polynomial factor of $g$
    has direction
    $(1,0)$
    or
    $(0,1)$.
    Let $\vec{v} \in \{ (1,0),(0,1)\}$.
    We have
    $\phi_1 = 1+t+\ldots + t^{r-1} + (1-(b-a))t^r +t^{r+1} + \ldots + t^{2r} \in \mathcal{F}_{\vec{v}}(g)$
    and
    $\phi_2 = 1+t+ \ldots + t^{2r} \in \mathcal{F}_{\vec{v}}(g)$.
    See Figure \ref{Illustrations} for an illustration in the case $r=2$.
    Since $\phi_2-\phi_1=(b-a)t^r$ is a non-trivial monomial, $\phi_1$ and $\phi_2$ have no common factors.
    Thus, by Theorem \ref{theorem1}
    the polynomial
    $g$ has no line polynomial factors and the claim follows by Corollary \ref{corollary1}.
\end{proof}

\noindent
In the above proofs we used the fact that two Laurent polynomials in one variable have no common factors if and only if they generate the entire ideal $\C[t^{\pm1}]$, and they do this if and only if they generate a non-zero monomial.
This is known as the \emph{weak Nullstellensatz} \cite{cox}.


A shape $D \subseteq \Z^2$ is \emph{convex} if it is the intersection $D = \conv(D) \cap \Z^2$ where $\conv(D) \subseteq \R^2$ is the real convex hull of $D$.
Above all our shapes were convex.
Next we generalize the above theorems and give a sufficient condition for forced periodicity of $(D,b,a)$-coverings for convex $D$.

So, let $D \subseteq \Z^2$ be a finite convex shape.
Any $(D,b,a)$-covering has a periodizer $g = f_D - (b-a)$.
As earlier, we study whether $g$ has any line polynomial factors since if it does not, then Corollary \ref{corollary1} guarantees forced periodicity.
For any $\vec{v} \neq \vec{0}$ the set $\mathcal{F}_{\vec{v}}(f_D)$ contains only polynomials $\phi_n = 1 + \ldots + t^{n-1}$ for different $n \geq 1$ since $D$ is convex: if $D$ contains two points, then $D$ contains every point between them.
Thus, $\mathcal{F}_{\vec{v}}(g)$ contains only polynomials $\phi_n$ for different $n \geq 1$ and, if $b-a \neq 0$, it may also contain a polynomial $\phi_{n_0} - (b-a)t^{m_0}$ for some $n_0 \geq 1$ such that $\phi_{n_0} \in \mathcal{F}_{\vec{v}}(f_D)$ and for some $m_0 \geq 0$.
If $b-a=0$, then $g=f_D$ and thus $\mathcal{F}_{\vec{v}}(g) = \mathcal{F}_{\vec{v}}(f_D)$.

Two polynomials $\phi_m$ and $\phi_n$ have a common factor if and only if $\gcd(m,n) > 1$.
More generally, the polynomials $\phi_{n_1}, \ldots, \phi_{n_r}$ have a common factor if and only if $d = \gcd(n_1,\ldots,n_r) > 1$ and, in fact, their greatest common factor is the $d$th \emph{cyclotomic polynomial}
$$
\prod_{\substack{1 \leq k \leq d \\ \gcd(k,d) = 1}}(t - e^{i \cdot \frac{2 \pi k}{d}}).
$$

Let us introduce the following notation. For any polynomial $f$, we
denote by $\mathcal{F}'_{\vec{v}}(f)$ the set of normal forms of the
non-zero fibers $\sum_{k \in \Z} f_{\vec{u} + k \vec{v}} X^{\vec{u} + k \vec{v}}$ for all $\vec{u}\not\in\Z\vec{v}$.
In other words, we exclude the fiber through the origin. Let us also denote $\fib{\vec{v}}{f}$ for the
normal form of the fiber $\sum_{k \in \Z} f_{k \vec{v}} X^{k \vec{v}}$ through the origin. We have
$\mathcal{F}_{\vec{v}}(f)=\mathcal{F}'_{\vec{v}}(f)\cup\{\fib{\vec{v}}{f}\}$ if $\fib{\vec{v}}{f} \neq 0$ and $\mathcal{F}_{\vec{v}}(f)=\mathcal{F}'_{\vec{v}}(f)$ if $\fib{\vec{v}}{f} = 0$.

Applying Theorems \ref{theorem on line polynomial factors} and \ref{theorem1} we have the following theorem that gives sufficient conditions for every $(D,b,a)$-covering to be periodic for a finite and convex $D$.
This theorem generalizes the results proved above.
In fact, they are corollaries of the theorem.
The first part of the theorem was also mentioned in \cite{geravker-puzynina} in a slightly different context and in a more general form.

\begin{theorem} \label{convex perfect covering}
    Let $D$ be a finite convex shape, $g = f_D - (b-a)$
    and let $E$ be the set of the outer edge directions of $g$.
    \begin{itemize}
    \item Assume that $b-a = 0$. For any $\vec{v} \in E$ denote $d_\vec{v}=\gcd(n_1,\ldots,n_r)$ where
    $\mathcal{F}_{\vec{v}}(g) = \{ \phi_{n_1},\ldots,\phi_{n_r}\}$. If $d_\vec{v} = 1$ holds for all
    $\vec{v} \in E$, then every $(D,b,a)$-covering is two-periodic. If $d_\vec{v} = 1$ holds for all but some
    parallel $\vec{v} \in E$, then every $(D,b,a)$-covering is periodic.
    \item Assume that $b-a \neq 0$. For any $\vec{v} \in E$ denote $d_\vec{v}=\gcd(n_1,\ldots,n_r)$ where
    $\mathcal{F}'_{\vec{v}}(g) = \{ \phi_{n_1},\ldots,\phi_{n_r}\}$.
    If the $d_\vec{v}$'th cyclotomic polynomial and $\fib{\vec{v}}{g}$ have no common factors for
    any $\vec{v} \in E$, then every $(D,b,a)$-covering is two-periodic. If the condition holds for all but some
    parallel $\vec{v} \in E$, then every $(D,b,a)$-covering is periodic. (Note that the condition is satisfied, in particular, if $d_\vec{v} = 1$.)
    \end{itemize}
\end{theorem}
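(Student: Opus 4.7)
The plan is to reduce the entire statement to the question: for which directions $\vec{v}$ does the polynomial $g = f_D-(b-a)$ admit a line polynomial factor, and then to answer this question by inspecting $\mathcal{F}_{\vec{v}}(g)$ using convexity of $D$. The starting observation is that $g$ is a periodizer of any $(D,b,a)$-covering: rewriting the definition of a perfect covering gives $gc = a\mathbbm{1}$, whose right-hand side is strongly periodic. So Theorem~\ref{theorem on line polynomial factors} applies, and the two conclusions of the theorem---two-periodicity and periodicity in a single direction---follow once we show, respectively, that $g$ has no line polynomial factors, or that all line polynomial factors of $g$ are parallel.

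Lemma~\ref{lemma1} then restricts attention to the finitely many outer edge directions $\vec{v}\in E$, and for each such $\vec{v}$, Theorem~\ref{theorem1} translates the presence of a line polynomial factor of $g$ in direction $\vec{v}$ into the existence of a common factor of the polynomials in $\mathcal{F}_{\vec{v}}(g)$. The whole theorem therefore reduces to ruling out such common factors for every $\vec{v}\in E$ (for the two-periodic conclusion) or for all but one parallel class of $\vec{v}\in E$ (for the periodic conclusion).

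Convexity enters decisively at this point: every non-empty $\vec{v}$-fiber of $f_D$ has contiguous support, so every polynomial in $\mathcal{F}_{\vec{v}}(f_D)$ is of the form $\phi_n = 1+t+\cdots+t^{n-1}$. If $b-a=0$, then $g=f_D$ and $\mathcal{F}_{\vec{v}}(g) = \{\phi_{n_1},\dots,\phi_{n_r}\}$; a short computation with roots of unity shows that this family has a common factor iff $d_{\vec{v}} := \gcd(n_1,\dots,n_r) > 1$, so the hypothesis $d_{\vec{v}}=1$ precisely forbids a line polynomial factor in direction $\vec{v}$. If $b-a\neq 0$, the polynomial $g$ differs from $f_D$ only at the origin, so the fibers off the origin-line are unchanged and $\mathcal{F}'_{\vec{v}}(g)$ still consists of polynomials $\phi_{n_i}$, while $\mathcal{F}_{\vec{v}}(g) = \mathcal{F}'_{\vec{v}}(g)\cup\{\fib{\vec{v}}{g}\}$ whenever $\fib{\vec{v}}{g}\neq 0$. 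Hence $\mathcal{F}_{\vec{v}}(g)$ has a common factor iff $\fib{\vec{v}}{g}$ shares a factor with the greatest common divisor of the $\phi_{n_i}$'s, which is the $d_{\vec{v}}$'th cyclotomic polynomial---exactly what the hypothesis rules out.

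The only mildly technical step is the explicit identification of $\gcd(\phi_{n_1},\dots,\phi_{n_r})$ as the claimed cyclotomic polynomial, but this is short: the roots of $\phi_n$ are precisely the non-trivial $n$th roots of unity, so a root $\zeta\neq 1$ is shared by every $\phi_{n_i}$ iff $\zeta^{d_{\vec{v}}}=1$. I do not expect further obstacles; the remaining work is routine bookkeeping in the two cases $b-a=0$ and $b-a\neq 0$, verifying that the hypotheses exclude line polynomial factors in exactly the right set of directions so that Theorem~\ref{theorem on line polynomial factors} delivers the stated conclusion.
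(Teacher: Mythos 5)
Your proposal is correct and follows essentially the same route as the paper: establish that $g=f_D-(b-a)$ periodizes every $(D,b,a)$-covering, use convexity to see that the $\vec{v}$-fibers are the polynomials $\phi_n$, identify their greatest common factor as the $d_{\vec{v}}$th cyclotomic polynomial, and then invoke Theorem~\ref{theorem1} and Theorem~\ref{theorem on line polynomial factors} to conclude two-periodicity or one-directional periodicity. The only difference is presentational: the paper places the periodizer identity, the convexity observation, and the cyclotomic-gcd fact in the discussion preceding the theorem and keeps the proof itself to a few lines, whereas you fold all of these into the argument.
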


\begin{proof}
Assume first that $b-a=0$.
If $d_{\vec{v}}=1$ for all $\vec{v} \in E$, then the $\vec{v}$-fibers of $g$ have no common factors and hence by Theorem \ref{theorem1} $g$ has no line polynomial factors.
If $d_\vec{v} = 1$ holds for all but some
parallel $\vec{v} \in E$, then all the line polynomial factors of $g$ are in parallel directions.
Thus, the claim follows by Theorem \ref{theorem on line polynomial factors}.

Assume then that $b-a \neq 0$.
If the $d_\vec{v}$'th cyclotomic polynomial and $\fib{\vec{v}}{g}$ have no common factors for all $\vec{v} \in E$, then by Theorem \ref{theorem1} $g$ has no line polynomial factors.
If the condition holds for all but some
parallel $\vec{v} \in E$, then all the line polynomial factors of $g$ are in parallel directions.
Thus, by Theorem \ref{theorem on line polynomial factors} the claim holds also in this case.
\end{proof}



\section{Forced periodicity of perfect colorings over arbitrarily large alphabets}
\label{perfect colorings}

In this section we prove a theorem that gives a sufficient condition for forced periodicity of two-dimensional perfect colorings over an arbitrarily large alphabet.
As corollaries of the theorem and theorems from the previous section we obtain conditions for forced periodicity of perfect colorings in two-dimensional infinite grid graphs.

We start by proving some lemmas that work in any dimension.
We consider the vector presentations of perfect colorings because this way we get a non-trivial annihilator for any such vector presentation:

\begin{lemma} \label{apulemma1}
    Let $c$ be the vector presentation of a $D$-perfect coloring over an alphabet of size $n$ with matrix $\mathbf{B} = (b_{ij})_{n \times n}$.
    Then $c$ is annihilated by
    the polynomial
    $$
    f(X) = \sum_{\vec{u} \in D} \mathbf{I} X^{-\vec{u}} - \mathbf{B}.
    $$
\end{lemma}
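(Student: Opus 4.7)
The plan is to verify directly that every coefficient of the power series $f(X)c(X)$ is the zero vector, using the formula for formal products given in Section~\ref{algebraic approach}. Fix $\vec{v} \in \Z^d$ and compute $(fc)_{\vec{v}}$. Since the polynomial $f$ has coefficient $\mathbf{I}$ at $X^{-\vec{u}}$ for each $\vec{u} \in D$ and coefficient $-\mathbf{B}$ at $X^{\vec{0}}$, the definition of formal product gives
\[
(fc)_{\vec{v}} \;=\; \sum_{\vec{u} \in D} \mathbf{I}\, c_{\vec{v}+\vec{u}} \;-\; \mathbf{B}\, c_{\vec{v}} \;=\; \sum_{\vec{u} \in D} c_{\vec{v}+\vec{u}} \;-\; \mathbf{B}\, c_{\vec{v}}.
\]

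The key step is to interpret the two terms on the right. Because $c$ is the vector presentation of the perfect coloring, each $c_{\vec{v}+\vec{u}}$ is some basis vector $\vec{e}_i$, namely the one whose index $i$ records the color appearing at position $\vec{v}+\vec{u}$. Hence the $i$th component of $\sum_{\vec{u} \in D} c_{\vec{v}+\vec{u}}$ is exactly the number of occurrences of color $a_i$ in the pattern $c|_{\vec{v}+D}$. If $c_{\vec{v}} = \vec{e}_j$, the defining property of a $(D,\mathbf{B})$-coloring says this count equals $b_{ij}$, so
\[
\sum_{\vec{u} \in D} c_{\vec{v}+\vec{u}} \;=\; \sum_{i=1}^{n} b_{ij}\, \vec{e}_i \;=\; \mathbf{B}\, \vec{e}_j \;=\; \mathbf{B}\, c_{\vec{v}}.
\]
Substituting back yields $(fc)_{\vec{v}} = \mathbf{B}\, c_{\vec{v}} - \mathbf{B}\, c_{\vec{v}} = \vec{0}$.

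Since $\vec{v}$ was arbitrary, $fc$ is the zero power series and $f$ annihilates $c$. There is no real obstacle here beyond keeping the bookkeeping straight; the one subtle point is the sign convention in the exponents (the monomials in $f$ are $X^{-\vec{u}}$, not $X^{\vec{u}}$), which is precisely what makes the shifted sum $\sum_{\vec{u}\in D} c_{\vec{v}+\vec{u}}$ come out over $\vec{v}+D$ rather than $\vec{v}-D$, matching the definition of a $D$-perfect coloring.
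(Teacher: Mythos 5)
Your proof is correct and follows essentially the same route as the paper: evaluate $(fc)_{\vec{v}}$ via the formal product formula, observe that $\sum_{\vec{u}\in D} c_{\vec{v}+\vec{u}}$ counts the colors in $c|_{\vec{v}+D}$ and hence equals the $j$th column $\mathbf{B}\vec{e}_j = \mathbf{B}c_{\vec{v}}$ when $c_{\vec{v}}=\vec{e}_j$. Your added remark about the sign convention $X^{-\vec{u}}$ is a fair point of bookkeeping but does not change the argument.
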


\noindent
\emph{Remark.}
Note the similarity of the above annihilator to the periodizer $\sum_{\vec{u}\in D} X^{-\vec{u}} - (b-a)$ of a $(D,b,a)$-covering.

\begin{proof}
    Let $\vec{v} \in \Z^d$ be arbitrary and assume that $c_{\vec{v}} = \vec{e}_j$.
    Then $(\mathbf{B} c)_{\vec{v}} = \mathbf{B} \vec{e}_j$ is the $j$th column of $\mathbf{B}$.
    On the other hand, from the definition of $\mathbf{B}$ we have $((\sum_{\vec{u} \in D} \mathbf{I} X^{-\vec{u}})  c)_{\vec{v}} = \sum_{\vec{u} \in D} c_{\vec{v} + \vec{u}} = \sum _{i=1}^n b_{ij} \vec{e}_i$ which is also the $j$th column of $\mathbf{B}$.
    Thus, $(fc)_{\vec{v}} = 0$ and hence $fc = 0$ since $\vec{v}$ was arbitrary.
\end{proof}

\noindent
The following lemma shows that as in the case of integral configurations with non-trivial annihilators, also the vector presentation of a perfect coloring has a special annihilator which is a product of difference polynomials.
By congruence of two polynomials with integer matrices as coefficients (mod $p$) we mean that their corresponding coefficients are congruent (mod $p$) and by congruence of two integer matrices (mod $p$) we mean that their corresponding components are congruent (mod $p$).

\begin{lemma} \label{apulemma2}
    Let $c$ be the vector presentation of a $D$-perfect coloring over an alphabet of size $n$  with matrix $\mathbf{B} = (b_{ij})_{n \times n}$.
    Then $c$ is annihilated by
    the polynomial
    $$
    g(X) = (\mathbf{I} X^{\vec{v}_1} - \mathbf{I}) \cdots (\mathbf{I} X^{\vec{v}_m} - \mathbf{I})
    $$
    for some vectors $\vec{v}_1,\ldots,\vec{v}_m$.
\end{lemma}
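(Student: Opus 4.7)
The plan is to reduce the matrix annihilator supplied by Lemma~\ref{apulemma1} to a scalar annihilator for each individual layer of $c$, and then invoke Theorem~\ref{special annihilator} layer by layer. Writing $f_D(X) = \sum_{\vec{u}\in D} X^{-\vec{u}}$ and identifying $c$ with the column vector $\vec{c}(X) = (\text{layer}_1(c),\ldots,\text{layer}_n(c))^T$ of its layers, the annihilator from Lemma~\ref{apulemma1} becomes the matrix identity
\[
\bigl(f_D(X)\mathbf{I} - \mathbf{B}\bigr)\,\vec{c}(X) \;=\; \vec{0}
\]
over the commutative ring $\Z[X^{\pm 1}]$.

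The key step is the classical adjugate trick: multiplying on the left by $\mathrm{adj}(f_D(X)\mathbf{I} - \mathbf{B})$ and using $\mathrm{adj}(M)\,M = \det(M)\,\mathbf{I}$ yields $p_{\mathbf{B}}(f_D(X))\,\vec{c}(X) = \vec{0}$, where $p_{\mathbf{B}}(\lambda) = \det(\lambda\mathbf{I} - \mathbf{B}) \in \Z[\lambda]$ is the characteristic polynomial of $\mathbf{B}$. Outside the degenerate case $D = \{\vec{0}\}$ (where $\mathbf{B} = \mathbf{I}$ and the statement is vacuous, e.g.\ with $\vec{v}_1 = \vec{0}$), the Laurent polynomial $f_D$ is non-constant, and a short support argument shows that $h(X) := p_{\mathbf{B}}(f_D(X))$ is non-zero in $\Z[X^{\pm 1}]$: pick a non-zero vertex $\vec{v}^*$ of the convex hull of $-D$; then $X^{n\vec{v}^*}$ appears with coefficient $1$ in the top-degree term $f_D(X)^n$ and cannot be produced by any lower power of $f_D$, so no cancellation with the lower-order terms of $p_{\mathbf{B}}(f_D(X))$ is possible.

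Each layer $\text{layer}_i(c)$ is therefore a finitary integral configuration annihilated by the non-zero scalar polynomial $h(X)$, hence in particular possesses a non-trivial periodizer. Theorem~\ref{special annihilator} then yields, for each $i$, vectors $\vec{v}_{i,1},\ldots,\vec{v}_{i,m_i}$ such that $\prod_{k}(X^{\vec{v}_{i,k}} - 1)$ annihilates $\text{layer}_i(c)$. Setting
\[
g_0(X) \;:=\; \prod_{i=1}^{n}\prod_{k=1}^{m_i}\bigl(X^{\vec{v}_{i,k}} - 1\bigr),
\]
we obtain a scalar polynomial that annihilates every layer simultaneously. Since $\mathbf{I} X^{\vec{v}} - \mathbf{I} = (X^{\vec{v}} - 1)\mathbf{I}$, and matrix polynomials proportional to $\mathbf{I}$ commute and multiply as scalars,
\[
g_0(X)\,\mathbf{I} \;=\; \prod_{i,k}\bigl(\mathbf{I} X^{\vec{v}_{i,k}} - \mathbf{I}\bigr),
\]
which annihilates $\vec{c}(X)$ and is of exactly the claimed form.

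The only genuine obstacle I anticipate is the non-vanishing of $h(X) = p_{\mathbf{B}}(f_D(X))$; the remainder of the argument is a direct application of Lemma~\ref{apulemma1} together with Theorem~\ref{special annihilator}. That obstacle is itself resolved by the vertex/Minkowski-sum argument sketched above, once the trivial shape $D = \{\vec{0}\}$ has been separated off.
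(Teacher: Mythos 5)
Your proposal is essentially correct and reaches the same endgame as the paper (a non-trivial scalar annihilator for each layer, then Theorem~\ref{special annihilator} applied layer by layer and the product of the resulting special annihilators), but it gets to that scalar annihilator by a genuinely different route. The paper works modulo a large prime $p$: using the Frobenius-type congruence $f^{p^k}\equiv\sum_{\vec{u}\in D}\mathbf{I}X^{-p^k\vec{u}}-\mathbf{B}^{p^k}\pmod p$ it finds $k\neq k'$ with $\mathbf{B}^{p^k}\equiv\mathbf{B}^{p^{k'}}$, subtracts to get an annihilator with coefficients $\pm\mathbf{I}$ modulo $p$, and then lifts the congruence to an exact identity because $p$ exceeds the size of the entries of $f'c$. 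Your adjugate/characteristic-polynomial argument replaces all of this with one line of linear algebra over the commutative ring $\Z[x^{\pm1},y^{\pm1}]$, producing the explicit annihilator $p_{\mathbf{B}}(f_D(X))$ of every layer. This is cleaner and more self-contained (no prime, no boundedness argument), and it even hands you an explicit degree bound; the paper's mod-$p$ device, on the other hand, is the tool it reuses elsewhere (Lemma~\ref{surjecticity lemma}), so the two proofs of Lemma~\ref{apulemma2} are stylistically aligned with different parts of the machinery.

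There is one sub-claim you should repair: it is not true that for an \emph{arbitrary} non-zero vertex $\vec{v}^*$ of $\conv(-D)$ the monomial $X^{n\vec{v}^*}$ is absent from the lower powers $f_D^j$, $j<n$. For example, with $-D=\{(1,0),(2,0),(0,1)\}$ the point $(1,0)$ is a vertex, yet $X^{2(1,0)}=x^2$ already occurs in $f_D^1$, so cancellation against the $a_{n-1}f_D^{n-1}$ term is possible for that monomial. The statement you need is still true: either choose $\vec{v}^*$ to be a vertex admitting a supporting linear functional $\ell$ with $\ell(\vec{v}^*)>0$ (e.g.\ a point of $-D$ of maximal Euclidean norm), so that $\ell(n\vec{v}^*)=n\,\ell(\vec{v}^*)$ strictly exceeds $\ell$ of any sum of $j<n$ elements of $-D$; or, more simply, observe that $\C[x^{\pm1},y^{\pm1}]$ is an integral domain, so a non-constant Laurent polynomial cannot satisfy any non-zero polynomial equation over $\C$, whence $p_{\mathbf{B}}(f_D)\neq0$ whenever $f_D$ is non-constant. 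With that patch (and your separate handling of the degenerate $D=\{\vec{0}\}$, where the paper's proof is equally silent), the argument is complete.
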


\begin{proof}
    By Lemma \ref{apulemma1} the power series $c$ is annihilated by $f(X) = \sum_{\vec{u} \in D} \mathbf{I} X^{-\vec{u}} - \mathbf{B}$.
    Let $p$ be a prime larger than $n c_{\text{max}}$ where $c_{\text{max}}$ is the maximum absolute value of the components of the coefficients of $c$.
    Since the coefficients of $f$ commute with each other, we have for any positive integer $k$ using the binomial theorem that
    \begin{equation*} \label{equation2}
    f^{p^k} = f^{p^k}(X) \equiv \sum_{\vec{u} \in D} \mathbf{I} X^{-p^k \vec{u}} - \mathbf{B}^{p^k} \ \ (\text{mod } p).
    \end{equation*}
    We have $f^{p^k}(X) c(X) \equiv 0 \ \ (\text{mod } p)$.
    There are only finitely many distinct matrices $\mathbf{B}^{p^k} \ \ (\text{mod } p)$.
    So, let $k$ and $k'$ be distinct and such that $\mathbf{B}^{p^{k}} \equiv \mathbf{B}^{p^{k'}} \ \ (\text{mod } p)$.
    Then the coefficients of
    $f' = f^{p^{k}} - f^{p^{k'}} \ \ (\text{mod } p)$
    are among $\mathbf{I}$ and $- \mathbf{I}$.
    Since $f^{p^{k}} c \equiv 0 \ \ (\text{mod } p)$ and $f^{p^{k'}} c \equiv 0 \ \ (\text{mod } p)$, also
    $$f' c \equiv 0 \ \ (\text{mod } p).$$
    The components of the configuration $f' c$ are bounded in absolute value by $n c_{\text{max}}$.
    Since we chose $p$ larger than $n c_{\text{max}}$,
    this implies that
    $$
    f'c = 0.
    $$

    Because $f' = \sum_{\vec{u} \in P_1} \mathbf{I} X^{\vec{u}} - \sum_{\vec{u} \in P_2} \mathbf{I} X^{\vec{u}}$ for some finite subsets $P_1$ and $P_2$ of $\Z^d$, the annihilation of $c$ by $f'$ is equivalent to the annihilation of every layer of $c$ by $f'' = \sum_{\vec{u} \in P_1} X^{\vec{u}} - \sum_{\vec{u} \in P_2} X^{\vec{u}}$.
    Thus, every layer of $c$ has a non-trivial annihilator and hence
    by Theorem \ref{special annihilator} every layer of $c$ has a special annihilator which is a product of difference polynomials.
    Let
    $$
    g' = (X^{\vec{v}_1}-1) \cdots (X^{\vec{v}_m}-1)
    $$
    be the product of all these special annihilators.
    Since $g'$ annihilates every layer of $c$,
    the polynomial
    $$
    g = (\mathbf{I} X^{\vec{v}_1} - \mathbf{I}) \cdots (\mathbf{I} X^{\vec{v}_m} - \mathbf{I})
    $$
    annihilates $c$.
\end{proof}


\begin{lemma} \label{surjecticity lemma}
    Let $p$ be a prime and let $H$ be an additive CA over $\Z_p^n$ determined by a polynomial $h = \sum_{i=0}^k \mathbf{A}_i X^{\vec{u}_i} \in \Z_p^{n \times n}[X^{\pm1}]$ whose coefficients $\mathbf{A}_i$ commute with each other.
    Assume that there exist $M \in \Z_p \setminus \{0\}$ and matrices $\mathbf{C}_0, \ldots , \mathbf{C}_k$ that commute with each other and with every $\mathbf{A}_i$ such that
    $$
    \mathbf{C}_0 \mathbf{A}_0 + \ldots + \mathbf{C}_k \mathbf{A}_k = M \cdot \mathbf{I}
    $$
    holds in $\Z_p^{k \times k}$.
    Then $H$ is surjective.
\end{lemma}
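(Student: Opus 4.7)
\medskip
\noindent
\emph{Proof plan.}
The plan is to invoke the Garden-of-Eden theorem and establish pre-injectivity of $H$. Since $H$ is additive, this reduces to showing that for every finitely supported polynomial $c \in \Z_p^n[X^{\pm 1}]$ the equality $hc = 0$ forces $c = 0$. So I would assume for contradiction that $hc = 0$ for some nonzero $c$ of finite support and derive $c = 0$.

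The key construction is the auxiliary Laurent polynomial
$$g(X) = \sum_{i=0}^k \mathbf{C}_i X^{-\vec{u}_i}.$$
Expanding the product gives
$$gh = \sum_{i,j} \mathbf{C}_i \mathbf{A}_j X^{\vec{u}_j - \vec{u}_i} = M\mathbf{I} + \sum_{\vec{v} \neq \vec{0}} \mathbf{B}_{\vec{v}} X^{\vec{v}},$$
where the constant term is $\sum_i \mathbf{C}_i \mathbf{A}_i = M\mathbf{I}$ by hypothesis and each $\mathbf{B}_{\vec{v}}$ lies in the commutative subring of $\Z_p^{n\times n}$ generated by $\{\mathbf{A}_i,\mathbf{C}_i,\mathbf{I}\}$.

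Next I would raise $gh$ to the $p^r$-th power. All matrix coefficients in $gh$ pairwise commute and the ground ring has characteristic $p$, so the Freshman's dream (applied inductively) together with $M^{p^r} = M$ in $\Z_p$ gives
$$(gh)^{p^r} = M\mathbf{I} + \sum_{\vec{v} \neq \vec{0}} \mathbf{B}_{\vec{v}}^{p^r} X^{p^r \vec{v}}.$$
Because $g$ and $h$ commute and $hc = 0$, we have $(gh)^{p^r} c = g^{p^r} h^{p^r} c = 0$, so
$$Mc + \sum_{\vec{v} \neq \vec{0}} \mathbf{B}_{\vec{v}}^{p^r} X^{p^r \vec{v}} c = 0.$$

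Finally I would exploit the dilation $\vec{v}\mapsto p^r\vec{v}$. Since $\supp(c)$ is finite and the set $\{\vec{v} \neq \vec{0} : \mathbf{B}_{\vec{v}} \neq \vec{0}\}$ is finite, for all sufficiently large $r$ the translates $\supp(c) + p^r\vec{v}$ (including $\vec{v} = \vec{0}$) are pairwise disjoint. The displayed identity then forces each summand to vanish separately; in particular $Mc = 0$, and since $M$ is a unit in $\Z_p$ this gives $c = 0$, a contradiction. Pre-injectivity, hence surjectivity via Garden-of-Eden, follows. The main technical point—and the only place where all three hypotheses (commutativity of the coefficients, existence of the relation $\sum \mathbf{C}_i \mathbf{A}_i = M\mathbf{I}$ with $M \neq 0$, and characteristic $p$) come together—is the combined use of the auxiliary polynomial $g$ and the Freshman's dream to produce an identity whose terms can be separated by the $p^r$-dilation; the rest is bookkeeping.
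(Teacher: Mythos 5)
Your proof is correct, and it follows the same skeleton as the paper's: Garden-of-Eden to reduce surjectivity to pre-injectivity, which for an additive CA amounts to showing that the only finitely supported configuration annihilated by $h$ is zero; then the Freshman's-dream/Frobenius dilation in characteristic $p$ to spread supports apart; and finally the hypothesis $\sum_i \mathbf{C}_i\mathbf{A}_i = M\mathbf{I}$ to extract the unit $M$. The packaging differs in one respect. The paper raises $h$ alone to the power $p^l$, chooses $l$ so that $\supp(e)$ fits in a $p^l\times\cdots\times p^l$ box, deduces $\mathbf{A}_i^{p^l}\vec{v}=\vec{0}$ for the single value $\vec{v}=e(\vec{0})$, and only then invokes the identity raised to the power $p^l$ to get the contradiction $M^{p^l}\vec{v}=\vec{0}$. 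You instead premultiply by the auxiliary polynomial $g=\sum_i\mathbf{C}_iX^{-\vec{u}_i}$ so that the identity is baked into the constant term of $gh$, then dilate $(gh)^{p^r}$ and separate supports to isolate $Mc=0$ globally. Your variant is arguably a touch cleaner, since it avoids singling out a cell and replaces the hypercube estimate by pairwise disjointness of finitely many translates of $\supp(c)$; both arguments use exactly the same three hypotheses at exactly the same junctures. Two small points you should make explicit: identifying the constant term of $gh$ with $M\mathbf{I}$ uses that the exponent vectors $\vec{u}_i$ are distinct, and the step $(gh)^{p^r}c=g^{p^r}h^{p^r}c$ uses that $g$ and $h$ commute, which holds because all the coefficients pairwise commute by hypothesis.
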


\begin{proof}
    Assume the contrary that $H$ is not surjective.
    By the Garden-of-Eden theorem $H$ is not pre-injective
    and hence there exist two distinct asymptotic configurations $c_1$ and $c_2$ such that $H(c_1) = H(c_2)$, that is, $h(X) c_1(X) = h(X) c_2(X)$.
    Thus, $h$ is an annihilator of $e=c_1-c_2$. 
    Without loss of generality we may assume that $c_1(\vec{0}) \neq c_2(\vec{0})$, {\it i.e.}, that $e(\vec{0}) = \vec{v} \neq \vec{0}$.
    Let $l$ be such that the support $\supp(e) = \{ \vec{u} \in \Z^d \mid e(\vec{u}) \neq \vec{0} \}$ of $e$ is contained in a $d$-dimensional $p^l \times \ldots \times p^l$ hypercube.
    Note that in $\Z_p^{k \times k}$ we have
    $$
    f^{p^l} = \sum_{i=0}^k \mathbf{A}_i^{p^l} X^{p^l \vec{u}_i}
    $$
    which is also an annihilator of $e$.
    Hence, by the choice of $l$ we have $\mathbf{A}_i^{p^l} \vec{v} = \vec{0}$ for all $i \in \{ 1 , \ldots , k \}$.
    By raising the identity
    $$
    \mathbf{C}_0 \mathbf{A}_0 + \ldots + \mathbf{C}_k \mathbf{A}_k = M \cdot \mathbf{I}
    $$
    to power $p^l$ and multiplying the result by the vector $\vec{v}$ from the right we get
    $$
    M^{p^l} \cdot \vec{v} = \mathbf{C}_0^{p^l} \mathbf{A}_0^{p^l} \vec{v} + \ldots + \mathbf{C}_k^{p^l} \mathbf{A}_k^{p^l} \vec{v}
    = \vec{0} + \ldots + \vec{0} = \vec{0}.
    $$
    However, this is a contradiction because $M^{p^l} \vec{v} \neq \vec{0}$.
    Thus, $H$ must be surjective as claimed.
\end{proof}

\begin{theorem} \label{main theorem}
    Let $D \subseteq \Z^2$ be a finite shape
    and assume that there exists an integer $t_0$ such that the polynomial
    $
    f_D - t = \sum_{\vec{u} \in D} X^{-\vec{u}} - t
    $
    has no line polynomial factors whenever $t \neq t_0$.
    Then any $D$-perfect coloring with matrix $\mathbf{B}$ is two-periodic whenever $\det(\mathbf{B} - t_0 \mathbf{I}) \neq 0$.
    If $f_D - t$ has no line polynomial factors for any $t$, then every $D$-perfect coloring is two-periodic.
\end{theorem}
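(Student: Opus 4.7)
The plan is to reduce the problem to a single scalar periodizer shared by all layers of the vector presentation, using the matrix-polynomial annihilator from Lemma \ref{apulemma1} together with the classical adjugate identity. Let $c \in \Z^{n}[[X^{\pm 1}]]$ be the vector presentation of a $D$-perfect coloring with matrix $\mathbf{B}$. By Lemma \ref{apulemma1}, the matrix-valued Laurent polynomial $f_D(X)\mathbf{I} - \mathbf{B}$ annihilates $c$. Because $f_D(X)\mathbf{I}$ and $\mathbf{B}$ commute, the adjugate identity gives
\[
\operatorname{adj}\bigl(f_D(X)\mathbf{I} - \mathbf{B}\bigr)\cdot\bigl(f_D(X)\mathbf{I} - \mathbf{B}\bigr)
=\det\bigl(f_D(X)\mathbf{I} - \mathbf{B}\bigr)\cdot\mathbf{I}
=\chi_{\mathbf{B}}\bigl(f_D(X)\bigr)\cdot\mathbf{I},
\]
where $\chi_{\mathbf{B}}(t)=\det(t\mathbf{I}-\mathbf{B})\in\Z[t]$ is the characteristic polynomial of $\mathbf{B}$. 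Hence the scalar integer Laurent polynomial $\chi_{\mathbf{B}}(f_D(X))$ annihilates (and in particular periodizes) every layer of $c$ individually.

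Next, factor $\chi_{\mathbf{B}}(t)=\prod_{i=1}^{n}(t-\lambda_i)$ over $\C$, where $\lambda_1,\ldots,\lambda_n$ are the eigenvalues of $\mathbf{B}$ with multiplicity. Then
\[
\chi_{\mathbf{B}}\bigl(f_D(X)\bigr)=\prod_{i=1}^{n}\bigl(f_D(X)-\lambda_i\bigr).
\]
By the hypothesis of the theorem, each factor $f_D(X)-\lambda_i$ with $\lambda_i\neq t_0$ has no line polynomial factors. Since $\C[x^{\pm 1},y^{\pm 1}]$ is a unique factorization domain, any irreducible line polynomial dividing a product must divide some factor, so a product of Laurent polynomials none of which has a line polynomial factor still has no line polynomial factor. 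The hypothesis $\det(\mathbf{B}-t_0\mathbf{I})\neq 0$ says exactly that $t_0$ is not an eigenvalue of $\mathbf{B}$, so every $\lambda_i$ differs from $t_0$ and consequently $\chi_{\mathbf{B}}(f_D(X))$ has no line polynomial factors at all.

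Applying Theorem \ref{theorem on line polynomial factors} to each layer of $c$, every layer is two-periodic, hence $c$ and the coloring it represents are two-periodic. The second assertion follows from the same argument without any restriction on $\mathbf{B}$, since then every factor $f_D(X)-\lambda_i$ has no line polynomial factors independently of which eigenvalues occur. The main points requiring care are the commutativity of $f_D(X)\mathbf{I}$ with $\mathbf{B}$, which makes the adjugate identity go through cleanly, and the correct interpretation of the hypothesis: since the eigenvalues of an integer matrix may be non-real algebraic numbers, the condition ``$f_D-t$ has no line polynomial factors whenever $t\neq t_0$'' must be understood as applying to every complex $t\neq t_0$, so that it genuinely covers all possible eigenvalues. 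I expect this to be the only subtle point; the remainder is linear algebra and the known factorization theory of two-variate Laurent polynomials.
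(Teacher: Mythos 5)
Your proof is correct, and it takes a genuinely different and substantially shorter route than the paper's. The paper keeps the matrix structure throughout: it invokes Lemma~\ref{apulemma2} to obtain a second annihilator that is a product of difference polynomials, forms the resultant $\Res_y(f',g')$ with $t$ treated as an indeterminate, extracts the factor $(t-t_0)^m$ from the coefficient polynomials, substitutes $\mathbf{B}$ back for $t$, and then needs a B\'ezout identity reduced modulo a prime together with the Garden-of-Eden theorem (via Lemma~\ref{surjecticity lemma}) to show that the resulting one-variable annihilator forces horizontal periodicity, before repeating everything for the vertical direction. You instead collapse the matrix problem to a scalar one in a single step: multiplying the annihilator of Lemma~\ref{apulemma1} by its adjugate over the commutative ring $\Z[x^{\pm1},y^{\pm1}]$ yields the scalar periodizer $\chi_{\mathbf{B}}(f_D)=\prod_i(f_D-\lambda_i)$ of every layer; the hypothesis $\det(\mathbf{B}-t_0\mathbf{I})\neq 0$ excludes $t_0$ from the spectrum; unique factorization (every line polynomial has an irreducible line polynomial factor $X^{\vec{v}}-\alpha$, which would have to divide one of the factors) shows the product has no line polynomial factor; and Theorem~\ref{theorem on line polynomial factors} finishes layer by layer. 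This bypasses Lemma~\ref{apulemma2}, the resultant computation, and all of the cellular-automata machinery. The point you rightly flag --- that ``$f_D-t$ has no line polynomial factors for $t\neq t_0$'' must be read over $\C$, since the eigenvalues $\lambda_i$ need not be integers --- is not an extra cost of your approach: the paper's own step $\gcd(f_0(t),\ldots,f_k(t))=(t-t_0)^m$ requires exactly the same reading (a common root of the $f_i$ is a priori an arbitrary algebraic number), and the verifications in Section~\ref{perfect coverings} do establish the property for arbitrary complex $t$. The only hypotheses left implicit in your argument are the non-degenerate ones ($f_D$ not constant, so that $\chi_{\mathbf{B}}(f_D)\neq 0$; this holds whenever $|D|\geq 2$), and these are equally implicit in the paper's proof.
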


\begin{proof}
Let $c$ be the vector presentation of a $D$-perfect coloring with matrix $\mathbf{B}$.
By Lemmas \ref{apulemma1} and \ref{apulemma2} it has two distinct annihilators:
$f= \sum_{\vec{u} \in D} \mathbf{I} X^{-\vec{u}} - \mathbf{B}$
and
$g= (\mathbf{I} X^{\vec{v}_1} - \mathbf{I}) \cdots (\mathbf{I} X^{\vec{v}_m} - \mathbf{I})$.
Let us replace $\mathbf{I}$ by 1 and $\mathbf{B}$ by a variable $t$ and consider the corresponding integral polynomials $f' = \sum_{\vec{u} \in D} X^{-\vec{u}} - t = f_D - t$ and $g' = (X^{\vec{v}_1} - 1) \cdots (X^{\vec{v}_m} - 1)$ in $\C[x,y,t]$.
Here $X=(x,y)$.

Without loss of generality we may assume that $f'$ and $g'$ are proper polynomials.
Indeed, we can multiply $f'$ and $g'$ by monomials such that the obtained polynomials $f''$ and $g''$ are proper polynomials and that they have a common factor if and only if $f'$ and $g'$ have a common factor.
So, we may consider $f''$ and $g''$ instead of $f'$ and $g'$ if they are not proper polynomials.


We consider the $y$-resultant $\Res_y(f',g')$ of $f'$ and $g'$, and write
$$
\Res_y(f',g') = f_0(t) + f_1(t)x + \ldots + f_k(t) x^k.
$$
By the properties of resultants $\Res_y(f',g')$ is in the ideal generated by $f'$ and $g'$, and it can be the zero polynomial only if $f'$ and $g'$ have a common factor.
Since $g'$ is a product of line polynomials, any common factor of $f'$ and $g'$ is also a product of line polynomials.
In particular, if $f'$ and $g'$ have a common factor, then they have a common line polynomial factor.
However, by the assumption $f'$ has no line polynomial factors if $t \neq t_0$.
Thus, $f'$ and $g'$ may have a common factor only if $t=t_0$ and hence $\Res_y(f',g')$ can be zero only if $t = t_0$.
On the other hand, $\Res_y(f',g') = 0$ if and only if $f_0(t) = \ldots = f_k(t) = 0$.
We conclude that $\gcd(f_0(t), \ldots , f_k(t)) = (t-t_0)^m$ for some $m \geq 0$.
Thus,
$$
\Res_y(f',g') = (t-t_0)^m (f'_0(t) + f'_1(t)x + \ldots + f'_k(t) x^k)
$$
where the polynomials $f'_0(t), \ldots , f'_k(t)$ have no common factors.

By the Euclidean algorithm there are polynomials $a_0(t), \ldots , a_k(t)$
such that
\begin{equation} \label{polynomial identity}
    a_0(t) f_0'(t) + \ldots + a_k(t) f_k'(t) = 1.
\end{equation}
Moreover, the coefficients of the polynomials $a_0(t), \ldots , a_k(t)$ are rational numbers because the polynomials $f'_0(t), \ldots , f'_k(t)$ are integral.
Note that if $f'$ has no line polynomial factors for any $t$, then $m=0$ and hence $f_i'(t) = f_i(t)$ for every $i \in \{1,\ldots,k\}$.

Let us now consider the polynomial
$$
(\mathbf{B}-t_0 \mathbf{I})^m(f_0'(\mathbf{B}) + f_1'(\mathbf{B}) x + \ldots + f'_k(\mathbf{B}) x^k)
$$
which is obtained from $\Res_y(f',g')$ by plugging back $\vec{I}$ and $\vec{B}$ in the place of $1$ and $t$, respectively.
Since $\Res_y(f',g')$ is in the ideal generated by $f'$ and $g'$, the above polynomial is in the ideal generated
by $f$ and $g$.
Thus, it is an annihilator of $c$ because both $f$ and $g$ are annihilators of $c$.

Assume that $\det(\mathbf{B} - t_0 \mathbf{I}) \neq 0$ or that $m=0$.
Now also
$$
h = f_0'(\mathbf{B}) + f_1'(\mathbf{B}) x + \ldots + f'_k(\mathbf{B}) x^k
$$
is an annihilator of $c$.
Since
$f'_0(t), \ldots , f'_k(t)$ have no common factors, $h$ is non-zero, because otherwise it would be $f_0'(\mathbf{B}) = \ldots = f_k'(\mathbf{B}) = 0$ and the minimal polynomial of $\mathbf{B}$ would be a common factor of $f'_0(t), \ldots , f'_k(t)$, a contradiction.

Plugging $t = \mathbf{B}$ to Equation \ref{polynomial identity}
we get
\begin{equation*}
    a_0(\mathbf{B}) f_0'(\mathbf{B}) + \ldots + a_k(\mathbf{B}) f_k'(\mathbf{B}) = \mathbf{I}.
\end{equation*}
Let us multiply the above equation by a common multiple $M$ of all the denominators of the rational numbers appearing in the equation and let us consider it (mod $p$) where $p$ is a prime that does not divide $M$.
We obtain the following identity
\begin{equation*} \label{matrix identity}
    a_0'(\mathbf{B}) f_0'(\mathbf{B}) + \ldots + a_k'(\mathbf{B}) f_k'(\mathbf{B}) = M \cdot \mathbf{I} \not \equiv 0 \ \ (\text{mod } p)
\end{equation*}
where all the coefficients in the equation are integer matrices.

By Lemma \ref{surjecticity lemma} the additive CA determined by $h = \sum_{i=0}^k f_i'(\mathbf{B})x^i$ is surjective.
Since $h$ is a polynomial in variable $x$ only, it defines a 1-dimensional CA $H$ which is surjective and which maps every horizontal fiber of $c$ to 0.
Hence, every horizontal fiber of $c$ is a pre-image of 0.
Let $c'$ be a horizontal fiber of $c$.
The Garden-of-Eden theorem implies that $0$ has finitely many, say $N$, pre-images under $H$.
Since also every translation of $c'$ is a pre-image of $0$, we conclude that $c' = \tau^i(c')$ for some $i \in \{0, \ldots , N-1\}$.
Thus, $(N-1)!$ is a common period of all the horizontal fibers of $c$ and hence $c$ is horizontally periodic.

Repeating the same argumentation for the $x$-resultant of $f'$ and $g'$ we can show that $c$ is also vertically periodic.
Thus, $c$ is two-periodic.
\end{proof}

\noindent
As corollaries of the above theorem and theorems from the previous section, we obtain new proofs for forced periodicity of perfect colorings in the square and the triangular grids, and a new result for forced periodicity of perfect colorings in the king grid:

\begin{corollary}[\cite{puzynina2}] \label{general square grid1}
    Let $D$ be the relative 1-neighborhood of the square grid. Then any $D$-perfect coloring with matrix $\mathbf{B}$ is two-periodic whenever $\det(\mathbf{B} - \mathbf{I}) \neq 0$.
    In other words, any $1$-perfect coloring with matrix $\mathbf{B}$ in the square grid is two-periodic whenever $\det(\mathbf{B} - \mathbf{I}) \neq 0$.
\end{corollary}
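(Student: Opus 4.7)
The plan is to derive this corollary as a direct application of Theorem \ref{main theorem} with $t_0 = 1$, following the same pattern as the reformulation of Axenovich's theorem above. The relative $1$-neighborhood of the square grid is $D = \{(0,0), (\pm 1, 0), (0, \pm 1)\}$, so its characteristic polynomial is $f_D = x^{-1} + y^{-1} + 1 + x + y$, and hence for a parameter $t$,
\[
f_D - t \;=\; x^{-1} + y^{-1} + (1-t) + x + y.
\]
I need to show that $f_D - t$ has no line polynomial factors for every $t \neq 1$, because then Theorem \ref{main theorem} with $t_0 = 1$ yields that every $D$-perfect coloring with matrix $\mathbf{B}$ satisfying $\det(\mathbf{B} - \mathbf{I}) \neq 0$ is two-periodic.

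To verify the line-polynomial-factor condition, I first note that the outer edges of $f_D - t$ lie in directions $(1,1)$, $(-1,-1)$, $(1,-1)$, and $(-1,1)$, exactly as for $f_D$, since the modification only changes the coefficient at the origin. By Lemma \ref{lemma1}, any line polynomial factor of $f_D - t$ must be in direction $(1,1)$ or $(1,-1)$, so it suffices to check these two directions. For $\vec{v} = (1,1)$, the three non-zero $\vec{v}$-fibers pass through $\{(-1,0),(0,1)\}$, through $\{(0,-1),(1,0)\}$, and through $\{(0,0)\}$, giving normal forms $1+s$, $1+s$, and $1-t$ respectively (here $s$ denotes the fiber variable). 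For $\vec{v} = (1,-1)$, the non-zero fibers pass through $\{(-1,0),(0,-1)\}$, $\{(0,1),(1,0)\}$, and $\{(0,0)\}$, yielding the same set of normal forms $\{1+s, 1-t\}$.

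In both cases, whenever $t \neq 1$, the fiber $1 - t$ through the origin is a non-zero constant (hence a unit in $\C[s^{\pm 1}]$), so the polynomials in $\mathcal{F}_{\vec{v}}(f_D - t)$ have no common factor. Theorem \ref{theorem1} then gives that $f_D - t$ has no line polynomial factor in either direction, and therefore no line polynomial factors at all. Applying Theorem \ref{main theorem} with $t_0 = 1$ finishes the proof.

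There is no real obstacle here: the argument is essentially the matrix-valued analogue of the reformulation of Theorem \ref{squaregrid1} proved earlier, the only additional ingredient being that the determinant condition $\det(\mathbf{B}-\mathbf{I}) \neq 0$ is precisely what Theorem \ref{main theorem} requires in place of the scalar condition $b-a \neq 1$. The computation of fibers and the invocation of Theorem \ref{theorem1} are routine and identical to the scalar case, since the exceptional value $t_0 = 1$ coincides with the exceptional value $b - a = 1$ in the two-color setting.
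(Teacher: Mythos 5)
Your proof is correct and follows exactly the paper's route: the paper's own proof of this corollary simply cites the fiber computation already done for Theorem \ref{squaregrid1} (showing $f_D - t$ has no line polynomial factors for $t \neq 1$) and then invokes Theorem \ref{main theorem} with $t_0 = 1$. You merely spell out that computation explicitly, which matches the paper's argument in substance.
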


\begin{proof}
    In our proof of Theorem \ref{squaregrid1} it was shown that the polynomial $f_{D} - t$ has no line polynomial factors if $t \neq 1$.
    Thus, by Theorem \ref{main theorem} any $(D, \mathbf{B})$-coloring is two-periodic whenever
    $\det(\mathbf{B} - \mathbf{I}) \neq 0$.
\end{proof}

\begin{corollary}[\cite{puzynina2}] \label{general square grid2}
    Let $D$ be the relative 1-neighborhood of the triangular grid. Then any $D$-perfect coloring with matrix $\mathbf{B}$ is two-periodic whenever $\det(\mathbf{B} + \mathbf{I}) \neq 0$.
    In other words, any $1$-perfect coloring with matrix $\mathbf{B}$ in the triangular grid is two-periodic whenever $\det(\mathbf{B} + \mathbf{I}) \neq 0$.
\end{corollary}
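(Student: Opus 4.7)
The plan is to apply Theorem \ref{main theorem} with the parameter $t_0 = -1$, so that the hypothesis $\det(\mathbf{B} - t_0 \mathbf{I}) \neq 0$ becomes exactly the stated condition $\det(\mathbf{B} + \mathbf{I}) \neq 0$. What I need to verify is that, for $D$ the relative $1$-neighborhood of the triangular grid, the polynomial $f_D - t$ has no line polynomial factors for every $t \neq -1$.

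This verification is essentially a repetition of the argument used in the proof of Theorem \ref{triangulargrid1}, but with the indeterminate $t$ playing the role that $b-a$ played there. First I would invoke Lemma \ref{lemma1}: the support of $f_D - t$ coincides with $D\cup\{\vec{0}\}$, which has outer edges only in the directions $(1,1), (-1,-1), (1,0), (-1,0), (0,1), (0,-1)$, so any line polynomial factor must lie in one of the three primitive directions $\vec{v}\in\{(1,1),(1,0),(0,1)\}$. For each such $\vec{v}$, a direct inspection of the geometry (as in Figure \ref{Illustrations}) gives
\[
\mathcal{F}_{\vec{v}}(f_D - t) = \bigl\{\,1+s,\;\; 1 + (1-t)s + s^2\,\bigr\},
\]
where $s = X^{\vec{v}}$. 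By Theorem \ref{theorem1}, a line polynomial factor in direction $\vec{v}$ exists if and only if these two one-variate polynomials share a non-unit common factor.

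Since $1+s$ is linear, the two polynomials have a common factor if and only if $1 + (1-t)s + s^2$ vanishes at $s=-1$; substituting gives $1-(1-t)+1 = 1+t$, which is zero precisely when $t = -1$. Thus for every $t \neq -1$ the polynomial $f_D - t$ has no line polynomial factors, and the hypothesis of Theorem \ref{main theorem} is satisfied with $t_0 = -1$. The conclusion follows immediately, yielding two-periodicity whenever $\det(\mathbf{B} + \mathbf{I}) \neq 0$.

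There is no serious obstacle here: the whole step is a direct translation of the Theorem \ref{triangulargrid1} computation from the binary-alphabet setting (where $b-a$ was a scalar) to the matrix setting (where $\mathbf{B}$ is a matrix and the exceptional scalar value $-1$ becomes the condition $\det(\mathbf{B}+\mathbf{I})\neq 0$). The only point that deserves care is making sure the three candidate directions all produce the same pair of fiber polynomials up to the evident symmetry, so that a single common-factor computation suffices for all of them; this is immediate from the shape of the triangular $1$-neighborhood.
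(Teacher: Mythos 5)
Your proposal is correct and follows essentially the same route as the paper: the paper's proof of this corollary simply cites the computation from Theorem \ref{triangulargrid1} showing that $f_D - t$ has no line polynomial factors for $t \neq -1$ and then applies Theorem \ref{main theorem} with $t_0 = -1$. You re-derive that computation inline (using evaluation at $s=-1$ rather than the paper's $\phi_1^2-\phi_2=(1+t)s$ monomial trick, a trivial variation), so there is nothing to add.
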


\begin{proof}
    In the proof of Theorem \ref{triangulargrid1} it was shown that the polynomial $f_{D} - t$ has no line polynomial factors if $t \neq -1$.
    Thus, by Theorem \ref{main theorem} any $(D, \mathbf{B})$-coloring is two-periodic whenever
    $\det(\mathbf{B} + \mathbf{I}) \neq 0$.
\end{proof}

\begin{corollary}[\cite{puzynina2}] \label{general triangular grid1}
    Let $r \geq 2$ and let $D$ be the relative $r$-neighborhood of the square grid. Then every $D$-perfect coloring is two-periodic.
    In other words, any $r$-perfect coloring in the square grid is two-periodic for all $r \geq 2$.
\end{corollary}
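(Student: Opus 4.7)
The plan is to apply the second (unconditional) part of Theorem \ref{main theorem}, which requires showing that the polynomial $f_D - t$ has no line polynomial factors for \emph{any} value of $t$. The corollary will then follow immediately, since the hypothesis ``there exists $t_0$ such that $f_D - t$ has no line polynomial factors for $t \neq t_0$'' is upgraded to ``no line polynomial factors for any $t$,'' and hence every $D$-perfect coloring is two-periodic regardless of its matrix $\mathbf{B}$.

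To establish that $f_D - t$ has no line polynomial factors, I would reuse the computation from the proof of Theorem \ref{squaregrid2} almost verbatim. By Lemma \ref{lemma1}, any line polynomial factor of $f_D - t$ must lie in the direction of an outer edge of its support. Since the $-t$ term only alters the coefficient at the origin, the support of $f_D - t$ is contained in (and on the boundary agrees with) that of $f_D$, so the outer edge directions are among $(1,1)$ and $(1,-1)$ (up to sign). Fix such a primitive $\vec{v}$. The crucial observation is that the fibers of $f_D$ in direction $\vec{v}$ with normal forms $\phi_1 = 1 + t + \cdots + t^r$ and $\phi_2 = 1 + t + \cdots + t^{r-1}$ exhibited in the proof of Theorem \ref{squaregrid2} are supported \emph{off} the origin, so the subtraction of $t$ from $f_D$ does not change them; they remain members of $\mathcal{F}_{\vec{v}}(f_D - t)$ for every value of $t$. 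Their difference $\phi_1 - \phi_2 = t^r$ is a monomial, so they share no common factor, and Theorem \ref{theorem1} rules out a line polynomial factor of $f_D - t$ in direction $\vec{v}$.

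Applying this to both candidate directions shows $f_D - t$ has no line polynomial factor at all, for any $t$. Invoking the final sentence of Theorem \ref{main theorem} then gives two-periodicity of every $D$-perfect coloring. The only point one has to be careful about is verifying that $\phi_1$ and $\phi_2$ really are supported on fibers disjoint from the origin so that the presence or absence of the $-t$ term is irrelevant; for $r \geq 2$ on the diamond $D = \{(i,j) : |i|+|j| \leq r\}$ this is easy to check directly, and is the main (very mild) obstacle to the argument.
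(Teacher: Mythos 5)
Your proposal is correct and follows essentially the same route as the paper: the paper's proof of this corollary simply cites the computation in the proof of Theorem \ref{squaregrid2}, where the two fibers $1+t+\cdots+t^r$ and $1+t+\cdots+t^{r-1}$ (both off the origin, hence unaffected by the $-t$ term) are shown to have no common factor, so $f_D - t$ has no line polynomial factors for any $t$, and the last sentence of Theorem \ref{main theorem} applies. Your extra care in checking that these fibers avoid the origin for $r \geq 2$ is exactly the detail the paper leaves implicit.
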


\begin{proof}
    In the proof of Theorem \ref{squaregrid2} it was shown that the polynomial $f_D - t$ has no line polynomial factors for any $t$.
    Thus, by Theorem \ref{main theorem} every $D$-perfect coloring is two-periodic.
\end{proof}

\begin{corollary}[\cite{puzynina2}] \label{general triangular grid2}
    Let $r \geq 2$ and let $D$ be the relative $r$-neighborhood of the triangular grid. Then every $D$-perfect coloring is two-periodic.
    In other words, any $r$-perfect coloring in the triangular grid is two-periodic for all $r \geq 2$.
\end{corollary}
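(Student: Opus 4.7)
The plan is to follow exactly the same template used for Corollary \ref{general triangular grid1}: reduce the statement to Theorem \ref{main theorem} by verifying that the polynomial $f_D - t$ has no line polynomial factors for \emph{any} value of $t$. The crucial observation is that the argument used in the proof of Theorem \ref{triangulargrid2} is actually independent of the constant term, so it already gives us what we need for the vector-valued setting.

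More concretely, I would first recall the geometry of the relative $r$-neighborhood $D$ of the triangular grid: its support has outer edges only in directions parallel to $(1,1)$, $(1,0)$ and $(0,1)$, so by Lemma \ref{lemma1} any line polynomial factor of $f_D - t$ must be in one of these primitive directions. Then, for each such $\vec{v}$, I would point out that because $r \geq 2$, the set $\mathcal{F}_{\vec{v}}(f_D - t)$ contains two consecutive cyclotomic-type fibers $\phi_n = 1 + t' + \dots + (t')^{n}$ and $\phi_{n+1} = 1 + t' + \dots + (t')^{n+1}$ coming from rows of the $r$-neighborhood that do \emph{not} pass through the origin (where here $t' = X^{\vec{v}}$ denotes the fiber variable, not the scalar $t$ in $f_D - t$). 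Since the origin is the only cell whose fiber is affected by the constant $-t$, these two fibers are present in $\mathcal{F}_{\vec{v}}(f_D - t)$ for every $t$.

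Next, I would invoke the same coprimality argument as in Theorem \ref{triangulargrid2}: $\phi_{n+1} - \phi_n = (t')^{n+1}$ is a monomial, so $\phi_n$ and $\phi_{n+1}$ share no common factors, and hence by Theorem \ref{theorem1} the polynomial $f_D - t$ has no line polynomial factors in direction $\vec{v}$. Since this holds for every candidate direction $\vec{v}$ and every $t \in \C$, the polynomial $f_D - t$ has no line polynomial factors whatsoever. Applying the second clause of Theorem \ref{main theorem} (the case where there is no exceptional $t_0$) immediately yields that every $D$-perfect coloring is two-periodic, completing the proof.

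There is essentially no obstacle here; the only care needed is the bookkeeping observation that the two coprime fibers exhibited in the proof of Theorem \ref{triangulargrid2} can be chosen away from the origin, so that they survive replacing the scalar $b-a$ by an arbitrary parameter $t$. Everything else is a direct citation of Theorem \ref{main theorem}.
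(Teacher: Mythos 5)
Your proposal is correct and takes essentially the same route as the paper: the paper's proof simply cites that the argument for Theorem \ref{triangulargrid2} already shows $f_D - t$ has no line polynomial factors for any $t$ (precisely because the two coprime consecutive fibers $\phi_n,\phi_{n+1}$ lie off the origin's fiber and are therefore unaffected by the constant term), and then applies Theorem \ref{main theorem}. Your write-up just makes this independence-of-$t$ bookkeeping explicit, which the paper leaves implicit.
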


\begin{proof}
    In the proof of Theorem \ref{triangulargrid2} it was shown that the polynomial $f_D - t$ has no line polynomial factors for any $t$.
    Thus, by Theorem \ref{main theorem} every $D$-perfect coloring is two-periodic.
\end{proof}

\begin{corollary} \label{general king grid}
    Let $r \geq 1$ and let $D$ be the relative $r$-neighborhood of the king grid. Then every $D$-perfect coloring with matrix $\mathbf{B}$ is two-periodic whenever $\det(\mathbf{B}) \neq 0$.
    In other words, every $r$-perfect coloring with matrix $\mathbf{B}$ in the king grid is two-periodic whenever $\det(\mathbf{B}) \neq 0$.
\end{corollary}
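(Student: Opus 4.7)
The plan is to apply Theorem \ref{main theorem} with $t_0 = 0$, reusing the fiber analysis already carried out in the proof of Theorem \ref{kinggrid}. Indeed, Theorem \ref{main theorem} tells us that any $D$-perfect coloring with matrix $\mathbf{B}$ is two-periodic provided $\det(\mathbf{B} - t_0 \mathbf{I}) \neq 0$, and $t_0 = 0$ would turn this hypothesis into $\det(\mathbf{B}) \neq 0$, which is exactly what the corollary claims.

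So the task reduces to verifying that for the relative $r$-neighborhood $D$ of the king grid, the Laurent polynomial $f_D - t$ has no line polynomial factors as long as $t \neq 0$. The outer edges of $f_D - t$ (which is the square $\{-r,\dots,r\}\times\{-r,\dots,r\}$ of monomials, with the constant coefficient perturbed by $-t$) are in the four axis directions $(\pm 1,0)$ and $(0,\pm 1)$, so by Lemma \ref{lemma1} any line polynomial factor must be in direction $(1,0)$ or $(0,1)$. For $\vec{v}$ one of these two directions, the fibers of $f_D - t$ consist of $2r$ copies of $\phi_{2r+1} = 1 + t' + \cdots + t'^{2r}$ (where $t' = X^{\vec{v}}$) together with the perturbed middle fiber $\phi_{2r+1} - t \cdot t'^{r}$. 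The difference of these two fibers is $t\cdot t'^{r}$, which is a non-zero monomial precisely when $t \neq 0$, so by the weak Nullstellensatz the fibers have no common factors, and Theorem \ref{theorem1} then ensures $f_D - t$ has no line polynomial factors in either admissible direction.

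Putting these two observations together, the polynomial $f_D - t$ has no line polynomial factors whenever $t \neq 0$, so the hypothesis of Theorem \ref{main theorem} is satisfied with $t_0 = 0$, and the conclusion follows. The main (and only) work-step is the fiber computation above; this is essentially the same calculation as in Theorem \ref{kinggrid} but with the integer $b-a$ replaced by the formal variable $t$, so no genuine new obstacle arises. The argument therefore reduces to a short verification followed by an invocation of Theorem \ref{main theorem}.
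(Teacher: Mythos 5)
Your proof is correct and follows essentially the same route as the paper: the paper's own proof of this corollary simply cites the fiber computation from Theorem \ref{kinggrid} (with $b-a$ playing the role of the formal parameter $t$) to conclude that $f_D - t$ has no line polynomial factors for $t \neq 0$, and then invokes Theorem \ref{main theorem} with $t_0 = 0$. Your explicit re-verification of the fibers ($2r$ copies of $1+t'+\cdots+t'^{2r}$ plus the perturbed middle fiber, differing by the monomial $t\cdot t'^r$) is exactly the computation the paper relies on.
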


\begin{proof}
    In the proof of Theorem \ref{kinggrid} we showed that the polynomial $f_D - t$ has no line polynomial factors if $t \neq 0$.
    Thus, by Theorem \ref{main theorem} any $(D,\mathbf{B})$-coloring is two-periodic whenever $\det(\mathbf{B}) \neq 0$.
\end{proof}

\noindent
\emph{Remark.}
Note that the results in Corollaries \ref{general square grid1}, \ref{general square grid2}, \ref{general triangular grid1} and \ref{general triangular grid2} were stated and proved in \cite{puzynina2} in a slightly more general form.
Indeed,
in \cite{puzynina2} it was proved that if a configuration $c \in \A^{\Z^2}$ is annihilated by
$$
\sum_{\vec{u} \in D} \mathbf{I} X^{-\vec{u}} - \mathbf{B}
$$
where $\mathbf{B} \in \Z^{n \times n}$ is an arbitrary integer matrix whose determinant satisfies the conditions in the four corollaries and $D$ is as in the corollaries, then $c$ is necessarily periodic.
This kind of configuration was called a \emph{generalized centered function}.
However, in Lemma \ref{apulemma1} we proved that the vector presentation of any $D$-perfect coloring with matrix $\mathbf{B}$
is annihilated by this polynomial, that is, we proved that the vector presentation of a perfect coloring is a generalized centered function.
By analyzing the proof of Theorem \ref{main theorem} we see that the theorem holds also for generalized centered functions and hence the corollaries following it hold also for generalized centered functions, and thus we have the same results as in \cite{puzynina2}.


\section{Forced periodicity of configurations of low abelian complexity} \label{abelian}

In this section we prove a statement concerning forced periodicity of two-dimensional configurations of low abelian complexity which generalizes a result in \cite{geravker-puzynina}.
In fact, as in \cite{geravker-puzynina} we generalize the definition of abelian complexity from finite patterns to polynomials and prove a statement of forced periodicity under this more general definition of abelian complexity.


Let $c \in \{\vec{e}_1, \ldots , \vec{e}_n \}^{\Z^d}$ and let $D \subseteq \Z^d$ be a finite shape.
Consider the polynomial
$f=\mathbf{I} \cdot f_D(X) = \sum_{\vec{u} \in D} \mathbf{I} X^{-\vec{u}} \in \Z^{n \times n}[X^{\pm1}]$.
The $i$th coefficient of
$(fc)_{\vec{v}} = \sum_{\vec{u} \in D} \mathbf{I} \cdot \vec{c_{\vec{v}+\vec{u}}}$
tells the number of cells of color $\vec{e}_i$ in the $D$-neighborhood of $\vec{v}$ in $c$ and hence
the abelian complexity of $c$ with respect to $D$ is exactly the number of distinct coefficients of $fc$.

More generally, we define the abelian complexity $A(c,f)$ of an integral vector configuration $c \in \A ^{\Z^d}$ where $\A$ is finite set of integer vectors \emph{with respect to a polynomial $f \in \Z^{n \times n}[X^{\pm1}]$} as
$$
A(c,f) = | \{ (fc)_{\vec{v}} \mid \vec{v} \in \Z^d \}|.
$$
This definition can be extended to integral configurations and polynomials.
Indeed,
we define the abelian complexity $A(c,f)$ of a configuration $c \in \A^{\Z^d}$ where $\A \subseteq \Z$
with respect to a polynomial $f = \sum f_i X^{\vec{u}_i} \in \Z[X^{\pm1}]$
to be the abelian complexity $A(c',f')$ of the vector presentation $c'$ of $c$ with respect to the polynomial $f' = \mathbf{I} \cdot f = \sum f_i \cdot \mathbf{I} \cdot X^{\vec{u}_i}$.
Consequently,
we say that $c$ has low abelian complexity with respect to a polynomial $f$ if $A(c,f) = 1$.
Clearly this definition is consistent with the definition of low abelian complexity of a configuration with respect to a finite shape since
if $c$ is an integral configuration, then
$A(c,D)=1$ if and only if $A(c,f_D)=1$, and if $c$ is an integral vector configuration, then $A(c,D)=1$ if and only if $A(c,\mathbf{I} \cdot f_D)=1$.

We study forced periodicity of two-dimensional configurations of low abelian complexity.
Note that a configuration of low abelian complexity is not necessarily periodic.
Indeed, in \cite{puzynina3} it was shown that there exist non-periodic two-dimensional configurations that have abelian complexity $A(c,D) = 1$ for some finite shape $D$.
However, in \cite{geravker-puzynina}
it was shown that if $A(c,f) = 1$ and if the polynomial $f$ has no line polynomial factors, then $c$ is two-periodic assuming that the support of $f$ is convex.
The following theorem strengthens this result and shows that the convexity assumption of the support of the polynomial is not needed.
We obtain this result as a corollary of Theorem \ref{theorem on line polynomial factors}.

\begin{theorem} \label{abelian complexity result}
    \label{theorem abelian}
	Let $c$ be a two-dimensional integral configuration over an alphabet of size $n$ and assume that it has low abelian complexity with respect to a polynomial $f \in \Z[x^{\pm1}, y^{\pm1}]$.
	If $f$ has no line polynomial factors, then $c$ is two-periodic.
	If $f$ has line polynomial factors in a unique primitive direction $\vec{v}$, then $c$ is $\vec{v}$-periodic.
    Thus, if $f_D$ has no line polynomial factors or its line polynomial factors are in a unique primitive direction, then any configuration that has low abelian complexity with respect to $D$ is two-periodic or periodic, respectively.
\end{theorem}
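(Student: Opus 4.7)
The plan is to repackage the low abelian complexity hypothesis as a concrete annihilation statement for each layer of the vector presentation of $c$, and then apply Theorem~\ref{theorem on line polynomial factors} layerwise. Let $c' \in \{\mathbf{e}_1,\ldots,\mathbf{e}_n\}^{\Z^2}$ denote the vector presentation of $c$ and set $f' = \mathbf{I}\cdot f$. By definition, $A(c,f) = A(c',f') = 1$, so the coefficient $(f'c')_{\vec{v}}$ is a single constant vector $\vec{w} \in \Z^n$ for all $\vec{v}$; that is, $f'c' = \vec{w}\cdot \mathbbm{1}(X)$. Passing to the $i$th coordinate gives $f\cdot \text{layer}_i(c') = w_i\cdot \mathbbm{1}(X)$ for every $i$.

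Next, for any non-zero $\vec{t}\in\Z^2$ the difference polynomial $X^{\vec{t}}-1$ annihilates $\mathbbm{1}(X)$, so multiplying the previous identity by $X^{\vec{t}}-1$ shows that $(X^{\vec{t}}-1)f$ annihilates every $\text{layer}_i(c')$; in particular it is a periodizer of every layer. To use Theorem~\ref{theorem on line polynomial factors} I need to control the line polynomial factors of $(X^{\vec{t}}-1)f$. The key observation is that $\C[x^{\pm1},y^{\pm1}]$ is a UFD and every non-monomial factor of a line polynomial in direction $\vec{u}$ is itself a line polynomial in direction $\vec{u}$; hence any line polynomial factor of $(X^{\vec{t}}-1)f$ decomposes into irreducible line polynomial factors, each of which must divide either $X^{\vec{t}}-1$ or $f$.

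In the first case, assume $f$ has no line polynomial factors. Then every irreducible line polynomial factor of $(X^{\vec{t}}-1)f$ divides $X^{\vec{t}}-1$ and is therefore in direction $\vec{t}$. Applying this with $\vec{t} = (1,0)$ and then with $\vec{t} = (0,1)$, Theorem~\ref{theorem on line polynomial factors} implies that each layer of $c'$ is periodic in both coordinate directions, hence two-periodic, and so $c$ is two-periodic. In the second case, assume the line polynomial factors of $f$ all lie in a single primitive direction $\vec{v}$. Choosing $\vec{t}=\vec{v}$, the irreducible line polynomial factors of $(X^{\vec{v}}-1)f$ all lie in direction $\vec{v}$ (whether they come from the $X^{\vec{v}}-1$ factor or from $f$), so Theorem~\ref{theorem on line polynomial factors} forces every layer of $c'$ to be $\vec{v}$-periodic, hence $c$ itself is $\vec{v}$-periodic. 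The last sentence of the statement, about $f_D$, is then immediate because $A(c,D)=1$ is the definition of $A(c,f_D)=1$.

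The only mildly delicate step is the factorization claim that the line polynomial factors of $(X^{\vec{t}}-1)f$ point where they should; this reduces to unique factorization and to the fact that, after a monomial change of coordinates $t = X^{\vec{v}}$ adapted to a primitive direction $\vec{v}$, a line polynomial in direction $\vec{v}$ becomes a univariate Laurent polynomial in $t$, whose factors are again univariate. Once that bookkeeping is in place, everything else is a direct invocation of Theorem~\ref{theorem on line polynomial factors}.
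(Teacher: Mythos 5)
Your proof is correct and follows essentially the same route as the paper: pass to the vector presentation, observe that $f\cdot\text{layer}_i(c') = w_i\,\mathbbm{1}$ for each $i$, and invoke Theorem~\ref{theorem on line polynomial factors} layer by layer. The only difference is that your detour through the annihilator $(X^{\vec{t}}-1)f$ and the accompanying unique-factorization bookkeeping are unnecessary: since $w_i\,\mathbbm{1}$ is strongly periodic, $f$ itself is already a \emph{periodizer} of every layer, and Theorem~\ref{theorem on line polynomial factors} is stated for periodizers, so it applies directly to $f$ with exactly the hypotheses on its line polynomial factors that the theorem assumes.
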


\begin{proof}
    By the assumption that $A(c,f)=1$ we have $f'c' = \vec{c}_0 \mathbbm{1}$ for some $\vec{c}_0 \in \Z^n$
    where $c'$ is the vector presentation of $c$ and $f' = \mathbf{I} \cdot f$.
    Thus,
    $f$ periodizes every layer of $c'$.
    If $f$ has no line polynomial factors,
    then by Theorem \ref{theorem on line polynomial factors} every layer of $c'$ is two-periodic and hence $c'$ is two-periodic.
    If $f$ has line polynomial factors in a unique primitive direction $\vec{v}$, then by Theorem \ref{theorem on line polynomial factors} every layer of $c'$ is $\vec{v}$-periodic and hence also $c'$ is $\vec{v}$-periodic.
    Since $c$ is periodic if and only if its vector presentation $c'$ is periodic, the claim follows.
\end{proof}

\noindent
\emph{Remark.}
In \cite{geravker-puzynina} a polynomial $f \in \Z[X^{\pm1}]$ is called abelian rigid if an integral configuration $c$ having low abelian complexity with respect to $f$ implies that $c$ is strongly periodic.
In the above theorem we proved that if a polynomial $f \in \Z[x^{\pm1},y^{\pm1}]$ has no line polynomial factors then it is abelian rigid.
Also, the converse holds as proved in \cite{geravker-puzynina}, that is, if a polynomial $f \in \Z[x^{\pm1},y^{\pm1}]$ has a line polynomial factor then it is not abelian rigid.
This means that if $f$ has a line polynomial factor then there exists a configuration which is not two-periodic but has low abelian complexity with respect to $f$.
In fact this direction holds for all $d$, not just for $d=2$ as reported in \cite{geravker-puzynina}.

In the following example we introduce an open problem related to configurations of low abelian complexity.

\begin{example}[Periodic tiling problem]
    This example concerns \emph{translational tilings} by a single tile.
    In this context by a tile we mean any finite subset $F \subseteq \Z^d$ and by a tiling by the tile $F$ we mean such subset $C \subseteq \Z^d$ that every point of the grid $\Z^d$ has a unique presentation as a sum of an element of $F$ and an element of $C$.
    Presenting the tiling $C$ as its indicator function we obtain a $d$-dimensional binary configuration $c \in \{0,1\}^{\Z^d}$ defined by
    $$
    c_{\vec{u}} =
    \begin{cases}
        1, \text{ if } \vec{u} \in C \\
        0, \text{ if } \vec{u} \not \in C
    \end{cases}.
    $$
    The configuration $c$ has exactly $|F|$ different patterns of shape $-F$, namely the patterns with exactly one symbol 1.
    In other words, it has low complexity with respect to $-F$.
    Let $f = f_F = \sum_{\vec{u} \in F} X^{-\vec{u}}$ be the characteristic polynomial of $F$.
    Since $C$ is a tiling by $F$, we have $fc = \mathbbm{1}$.
    In fact, $c$ has low abelian complexity with respect to $f$ and $-F$.
    Thus, by Theorem \ref{abelian complexity result} any tiling by $F \subset \Z^2$ is two-periodic if $f_F$ has no line polynomial factors.

    The periodic tiling problem claims that if there exists a tiling by a tile $F \subseteq \Z^d$, then there exists also a periodic tiling by $F$ \cite{lagarias-wang, Szegedy1998}.
    By a simple pigeonholing argument it can be seen that in dimension $d=1$ all translational tilings by a single tile are periodic and hence the periodic tiling problem holds in dimension 1 \cite{newman}.
    For $d \geq 2$ the conjecture is much trickier and only recently it was proved by Bhattacharya that it holds for $d=2$ \cite{bhattacharya}.
    In \cite{greenfeld-tao} it was presented a slightly different proof in the case $d=2$ with some generalizations.
    For $d \geq 3$ the conjecture is still partly open.
    However,
    very recently it has been proved that for some sufficiently large $d$ the periodic tiling conjecture is false \cite{greenfeld-tao2022}.

\end{example}

\section{Algorithmic aspects} \label{algortihmic aspects}


All configurations in a subshift are periodic, in particular, if there are no configurations in the subshift at all!
It is useful to be able to detect such trivial cases.

The set
$$
\mathcal{S}(D,b,a) =
\{ c \in \{0,1\}^{\Z^2} \mid (f_D - (b-a)) c = a \mathbbm{1}(X) \}
$$
of all $(D,b,a)$-coverings is an SFT for any given finite shape $D$ and non-negative integers $b$ and $a$.
Hence, the question whether there exist any $(D,b,a)$-coverings for a given neighborhood
$D$ and covering constants $b$ and $a$ is equivalent to the question whether
the SFT $\mathcal{S}(D,b,a)$ is non-empty.
The question of emptiness
of a given SFT is undecidable in general, but if the SFT is known to be not aperiodic, then
the problem becomes decidable as a classic argumentation by Hao Wang shows:

\begin{lemma}[\cite{wang}] \label{lemma wang}
    If an SFT is either the empty set or it contains a strongly periodic configuration, then its emptiness problem is decidable, that is, there is an algorithm to determine whether there exist any configurations in the SFT.
\end{lemma}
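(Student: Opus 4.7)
The plan is to run two semi-algorithms in parallel: one searching for a certificate of non-emptiness and one for a certificate of emptiness. By the hypothesis of the lemma, exactly one of the two will eventually halt, and its output determines the answer.

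For the non-emptiness certificate I would exploit the assumption that a non-empty SFT in this class contains a strongly periodic configuration. Such a configuration is determined by $d$ linearly independent period vectors $\vec{v}_1, \ldots, \vec{v}_d \in \Z^d$ together with an assignment of symbols from $\A$ to one fundamental domain of the lattice they generate. The first algorithm enumerates all tuples of candidate period vectors (ordered, say, by the volume of the fundamental domain they span) together with all finitely many symbol assignments to that domain. For each candidate it builds the resulting strongly periodic configuration $c$ and tests whether $c \in X_F$, where $F$ is the finite set of forbidden patterns defining the SFT. Because $c$ is periodic and $F$ is finite, this membership test reduces to inspecting all occurrences of forbidden patterns whose support meets a window containing one fundamental domain expanded by a buffer of width equal to the diameter of $F$; this is a finite check. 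If some candidate passes, output ``non-empty''.

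For the emptiness certificate I would appeal to compactness of $\A^{\Z^d}$ in the prodiscrete topology. If $X_F = \emptyset$, then by a standard König-type argument there exists an integer $n$ such that every pattern $p \in \A^{[-n,n]^d}$ contains an occurrence of some forbidden pattern in $F$ whose support lies entirely within $[-n,n]^d$. Indeed, otherwise for each $n$ one could pick such a ``locally valid'' $[-n,n]^d$-pattern, and compactness would produce a limit configuration in $X_F$, contradicting emptiness. The second algorithm iterates $n=1,2,\ldots$ and, for each $n$, enumerates the finitely many patterns in $\A^{[-n,n]^d}$; as soon as every one of them is found to contain a forbidden sub-pattern, output ``empty''.

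Under the hypothesis one of the two searches must terminate: if $X_F$ is non-empty, the first finds a periodic witness, and if $X_F$ is empty, the second finds a level $n$ at which local extendability fails. Correctness of the two outputs is immediate. The only mildly delicate point is making the periodicity-based check effective, which is handled by the observation above that a periodic configuration's membership in an SFT is decided by finitely many local inspections. There is no real obstacle beyond making this bookkeeping explicit.
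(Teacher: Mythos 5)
Your proposal is correct and is precisely the classical Wang argument that the paper invokes by citation: two parallel semi-algorithms, one enumerating strongly periodic candidates (whose membership in the SFT is a finite check) and one using compactness to certify emptiness at some finite window size. No discrepancy with the paper's (cited) proof.
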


\noindent
In particular, if $g = f_D - (b-a)$ has line polynomial factors in at most one direction, then
the question whether there exist any $(D,b,a)$-coverings is decidable:

\begin{theorem}
\label{thm:effective}
Let a finite $D \subseteq \Z^2$ and non-negative integers $b$ and $a$ be given such that
the polynomial $g=f_D - (b-a) \in \Z[x^{\pm1},y^{\pm1}]$ has line polynomial factors in at most one primitive direction.
Then there exists an algorithm to determine whether there exist any $(D,b,a)$-coverings.
\end{theorem}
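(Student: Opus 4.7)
The plan is to apply Lemma~\ref{lemma wang} to the SFT $\mathcal{S}(D,b,a)$ of all $(D,b,a)$-coverings; it suffices to show that under the hypothesis on $g$, this SFT is either empty or contains a two-periodic configuration. The starting observation is that for every $c\in\mathcal{S}(D,b,a)$ one has $gc = a\mathbbm{1}(X)$, which is strongly periodic, so $g$ is a periodizer of $c$ and Theorem~\ref{theorem on line polynomial factors} applies.

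If $g$ has no line polynomial factors, Theorem~\ref{theorem on line polynomial factors} immediately forces every $c\in\mathcal{S}(D,b,a)$ to be two-periodic, and Lemma~\ref{lemma wang} yields the algorithm. The substantive case is when the line polynomial factors of $g$ share a single primitive direction $\vec{v}$: then Theorem~\ref{theorem on line polynomial factors} only guarantees that each $c\in\mathcal{S}(D,b,a)$ is $k\vec{v}$-periodic for some $k\geq 1$ that may depend on $c$. Assuming $\mathcal{S}(D,b,a)$ is non-empty, pick any witness $c$ and let $k$ be its period in direction $\vec{v}$. Let $\mathcal{S}_k\subseteq\mathcal{S}(D,b,a)$ denote the sub-SFT of $k\vec{v}$-periodic elements; it contains $c$, hence is non-empty. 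Choosing a lattice basis $\{\vec{v},\vec{w}\}$ of $\Z^2$, a $k\vec{v}$-periodic configuration is encoded by its values on a fundamental strip of width $k$, yielding a bijection between $\mathcal{S}_k$ and a subset of $(\{0,1\}^k)^{\Z}$ in which $\vec{w}$-translation becomes the one-dimensional shift. The defining local constraint of $\mathcal{S}(D,b,a)$ translates under this encoding into a finite-range constraint, so the image is a non-empty one-dimensional SFT; such an SFT always contains a periodic configuration, which lifts back to an element of $\mathcal{S}_k$ that is also $j\vec{w}$-periodic for some $j$, hence two-periodic.

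Combining the two cases, $\mathcal{S}(D,b,a)$ is either empty or contains a two-periodic configuration, and Lemma~\ref{lemma wang} then supplies the desired algorithm. The main obstacle is the second case above: a priori the periods $k$ witnessing $\vec{v}$-periodicity need not be uniform across $\mathcal{S}(D,b,a)$, so one cannot encode the whole SFT as a single one-dimensional SFT at once. The resolution is to pass to the sub-SFT $\mathcal{S}_k$ cut out by the period of a single witness, after which the classical decidability of one-dimensional SFTs and the existence of periodic points in any non-empty one finishes the reduction.
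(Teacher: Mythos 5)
Your proof is correct and follows essentially the same route as the paper: show that $\mathcal{S}(D,b,a)$ is either empty or contains a two-periodic configuration, then invoke Lemma~\ref{lemma wang}. The only difference is that the paper simply cites the standard fact that a two-dimensional SFT containing a periodic configuration also contains a two-periodic one, whereas you spell out its proof via the reduction of the sub-SFT of $k\vec{v}$-periodic configurations to a non-empty one-dimensional SFT; that argument is the standard one and is carried out correctly.
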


\begin{proof}	
Let $\mathcal{S}=\mathcal{S}(D,b,a)$ be the SFT of all $(D,b,a)$-coverings.
Since $g$ has line polynomial factors in at most one primitive direction, by Theorem \ref{theorem on line polynomial factors}
every element of $\mathcal{S}$ is periodic.
Any two-dimensional SFT that contains periodic configurations contains also two-periodic configurations. Thus,
$\mathcal{S}$ is either empty or contains a two-periodic configuration and hence by Lemma \ref{lemma wang} there is an algorithm to determine whether $\mathcal{S}$ is non-empty.
\end{proof}

\noindent
One may also want to design a perfect $(D,b,a)$-covering for given $D$, $b$ and $a$.
This can be effectively done under the assumptions of Theorem~\ref{thm:effective}:
As we have seen, if
$\mathcal{S}=\mathcal{S}(D,b,a)$ is non-empty, it contains a two-periodic configuration. For any two-periodic
configuration $c$ it is easy to check if $c$ contains a forbidden pattern.
By enumerating two-periodic configurations one-by-one one is guaranteed to find eventually one that is in $\mathcal{S}$.

If the polynomial $g$ has no line polynomial factors, then the following stronger result holds:

\begin{theorem} \label{effective}
	If the polynomial $g=f_D - (b-a)$ has no line polynomial factors
	for given finite shape $D \subseteq \Z^2$ and non-negative integers $b$ and $a$, then the SFT $\mathcal{S} = \mathcal{S}(D,b,a)$ is finite. One can then effectively construct
    all the finitely many elements of $\mathcal{S}$.
\end{theorem}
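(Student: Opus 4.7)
The plan is to establish that $\mathcal{S} = \mathcal{S}(D,b,a)$ is finite via a uniform period bound, and then to enumerate $\mathcal{S}$ effectively using that bound.

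If $\mathcal{S}$ is empty there is nothing to prove, and emptiness is decidable by Theorem~\ref{thm:effective}. Assume therefore that $\mathcal{S}$ is non-empty and fix some $c_0 \in \mathcal{S}$. For any $c \in \mathcal{S}$, the difference $d = c - c_0 \in \{-1,0,1\}^{\Z^2}$ satisfies $gd = gc - gc_0 = a\mathbbm{1} - a\mathbbm{1} = 0$, so $\mathcal{S} \subseteq c_0 + K$ where
\[
K = \{d \in \{-1,0,1\}^{\Z^2} \mid gd = 0\}.
\]
Since $g$ is a non-zero annihilator (hence periodizer) of every $d \in K$ and has no line polynomial factors, Theorem~\ref{theorem on line polynomial factors} gives that each $d \in K$ is two-periodic. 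It therefore suffices to produce a single $N = N(g)$ such that every $d \in K$ is simultaneously $(N,0)$- and $(0,N)$-periodic; then $|K| \leq 3^{N^2}$ and consequently $|\mathcal{S}| \leq 3^{N^2}$.

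Such an $N$ is obtained through a Fourier argument on the zero-set of $g$. Given $d \in K$ that is $(p,0)$- and $(0,q)$-periodic, discrete Fourier analysis on $\Z/p\Z \times \Z/q\Z$ expresses
\[
d(i,j) = \sum_{(\alpha,\beta) \in \mu_p \times \mu_q} \hat{d}_{\alpha,\beta}\,\alpha^i \beta^j,
\]
where $\mu_n$ denotes the group of complex $n$-th roots of unity. A direct computation shows that for each single character $(i,j) \mapsto \alpha^i \beta^j$ one has $g \cdot (\alpha^i\beta^j) = g(\alpha^{-1},\beta^{-1})\,\alpha^i\beta^j$, so the relation $gd = 0$ forces $\hat{d}_{\alpha,\beta} = 0$ whenever $(\alpha^{-1},\beta^{-1}) \notin V(g)$, where $V(g) = \{(x,y) \in (\C^*)^2 : g(x,y) = 0\}$. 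After the harmless relabelling $(\alpha,\beta) \mapsto (\alpha^{-1},\beta^{-1})$, the Fourier support of every $d \in K$ lies in $V(g) \cap (\mu_\infty \times \mu_\infty)$, where $\mu_\infty = \bigcup_{n \geq 1} \mu_n$. A line polynomial factor of $g$ in some direction would force $V(g)$ to contain a translated one-dimensional algebraic subtorus of $(\C^*)^2$; since $g$ has no such factors, $V(g)$ contains no translated positive-dimensional subtorus. By Laurent's theorem on torsion points of subvarieties of algebraic tori, the set $T := V(g) \cap (\mu_\infty \times \mu_\infty)$ is therefore finite. Taking $N$ to be the least common multiple of the orders of the coordinates of the points of $T$, each character $\alpha^i \beta^j$ with $(\alpha,\beta) \in T$ is $(N,0)$- and $(0,N)$-periodic, and hence so is every $d \in K$.

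For the effectivity of the construction, an explicit version of Laurent's theorem (such as that of Bombieri and Zannier) yields an explicit upper bound $\bar{N} \geq N$ computable from the exponents and coefficients of $g$. The algorithm then enumerates the $2^{\bar{N}^2}$ binary $(\bar{N},\bar{N})$-periodic configurations and, for each one, verifies on a single fundamental domain the local condition $gc = a\mathbbm{1}$; those passing the check form exactly $\mathcal{S}$. The main obstacle is establishing the uniform period bound: the proof of Theorem~\ref{main theorem} yields only a period depending on the individual configuration through a configuration-dependent special annihilator, whereas the Fourier-and-torsion argument above depends only on $g$.
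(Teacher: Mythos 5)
Your proof is correct, but it takes a genuinely different route from the paper's. The paper gets finiteness softly: every $c\in\mathcal{S}$ is two-periodic by Theorem~\ref{theorem on line polynomial factors}, and a two-dimensional subshift all of whose points are two-periodic is finite (the cited result of Ballier et al.); effectivity then comes from Lemma~\ref{lemma effectively}, a general semi-algorithmic procedure for finite SFTs that searches for a finite translation-invariant set $P$ of strongly periodic configurations with $X_G=P=X_F$, and which produces no a priori bound on periods. You instead extract a \emph{uniform} period bound $N$ depending only on $g$: the discrete Fourier support of any doubly periodic configuration annihilated by $g$ sits in $V(g)\cap(\mu_\infty\times\mu_\infty)$, the absence of line polynomial factors of $g$ is equivalent to $V(g)$ containing no translated one-dimensional subtorus, and Laurent's theorem on torsion points then makes this set finite. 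This buys something the paper's argument does not: an explicitly computable bound $\bar N$ on the periods and on $|\mathcal{S}|$ (via effective versions of the torsion-point theorem), and a brute-force enumeration in place of the unbounded search of Lemma~\ref{lemma effectively}. The cost is the reliance on nontrivial arithmetic input (torsion points on curves, or equivalently vanishing sums of roots of unity), where the paper stays entirely within symbolic dynamics.

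Two small points. First, your uniform bound is proved for the difference set $K$, but the enumeration step needs every element of $\mathcal{S}$ itself to be $(\bar N,\bar N)$-periodic; this is not literally established, since $c_0$ could a priori have other periods. The repair is immediate: run the same Fourier computation directly on $c\in\mathcal{S}$ with $gc=a\mathbbm{1}$; the right-hand side contributes only the trivial character, so the Fourier support of $c$ lies in $T\cup\{(1,1)\}$ (up to the inversion relabelling) and the same $N$ works, making the detour through $c_0+K$ unnecessary. Second, the appeal to Theorem~\ref{thm:effective} for the empty case is superfluous: $\bar N$ is computed from $g$ alone, and if no $(\bar N,\bar N)$-periodic candidate passes the local check the algorithm correctly outputs $\mathcal{S}=\emptyset$.
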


\noindent
The proof of the first part of above theorem relies on the fact that
a two-dimensional subshift is finite if and only if it contains only two-periodic configurations ~\cite{ballier}.
If $g$ has no line polynomial factors, then every configuration it periodizes (including every configuration in $\mathcal{S}$) is two-periodic by Theorem \ref{theorem on line polynomial factors}, and hence $\mathcal{S}$ is finite.
The second part of the theorem, {\it i.e.},  the fact that one can
effectively produce all the finitely many elements of $\mathcal{S}$
holds generally for finite  SFTs in any dimension:

\begin{lemma}
    \label{lemma effectively}
Given a finite $F \subseteq \A^*$ such that $X_F$ is finite, one can effectively construct the elements of $X_F$.
\end{lemma}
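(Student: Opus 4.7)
The plan is to dovetail an enumeration of strongly periodic candidate configurations with an effective termination check. Since $X_F$ is finite, a compactness argument shows that every $c \in X_F$ is strongly periodic and there exists a common period $N^*$ with $\tau^{N^* e_j}(c) = c$ for every $c \in X_F$ and every axis $e_j$; in particular $X_F = S_{N^*}$, where $S_N$ denotes the set of configurations $Ne_j$-periodic in every direction that avoid the forbidden patterns of $F$. Each $S_N$ is effectively computable by brute force: enumerate all $|\A|^{N^d}$ candidate $N$-tiles on $\{0,\ldots,N-1\}^d$ and test compatibility with $F$. Running the enumeration $S_1, S_2, S_3, \ldots$ will eventually cover $X_F$ (once $N$ is a multiple of $N^*$), but I need a way to detect when this has happened.

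The termination check exploits two observations. First, distinct orbits appearing in $S_N$ automatically have pairwise disjoint languages on the $N$-block: any shared $N$-patch already contains a fundamental domain of both configurations, and $N$-periodicity of both then provides a translation identifying the two orbits. Second, for any fixed shape $D$, the monotone decreasing sequence of $D$-restrictions of locally $F$-avoiding $D'$-patches (as $D \subseteq D'$ grows) stabilises at $\Patt{X_F}{D}$, since each of these sets is finite and their intersection equals $\Patt{X_F}{D}$. I therefore propose the following termination check at stage $N$: search for a block size $M' \geq N+1$ such that every restriction to $\{0,\ldots,N\}^d$ of a locally $F$-avoiding $\{0,\ldots,M'-1\}^d$-patch already appears in some configuration of $S_N$. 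Dovetailing this search over $(N, M')$ ensures termination, as the criterion is eventually met once $N$ is a multiple of $N^*$ and $M'$ is large enough.

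The main obstacle is proving correctness of the halting criterion. Writing $D_k = \{0,\ldots,k-1\}^d$ for brevity, when the check triggers we have $\Patt{X_F}{D_{N+1}} \subseteq \Patt{S_N}{D_{N+1}}$, so every $D_{N+1}$-window of any $c \in X_F$ is a window of some $\tilde c \in S_N$. I would pick $\tilde c \in S_N$ with $\tilde c|_{D_{N+1}} = c|_{D_{N+1}}$ and propagate this identification by a sliding-window argument: two adjacent windows $c|_{D_{N+1}}$ and $c|_{D_{N+1}+e_j}$ overlap on a region containing a full $D_N$-block, and by the automatic disjointness of orbit languages at $D_N$ the neighbouring window must come from the orbit of $\tilde c$; strong $N$-periodicity of $\tilde c$ then pins down the phase, so the same $\tilde c$ matches the neighbouring window as well. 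Connectivity of $\Z^d$ propagates the identification to every window, yielding $c = \tilde c \in S_N$. Together with the trivial inclusion $S_N \subseteq X_F$ this gives $X_F = S_N$, so the output of the algorithm is correct.
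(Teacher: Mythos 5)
Your overall architecture is genuinely different from the paper's (which reduces the problem to deciding membership $p\in\Lang{X_F}$ by running two semi-algorithms in parallel, and then to deciding equality of $X_F$ with candidate finite SFTs built from strongly periodic configurations), and most of your steps check out: the existence of a common axis period $N^*$, the computability of each $S_N$, the disjointness of orbit languages on the $N$-block, and the sliding-window propagation are all correct. However, the termination check has a genuine gap: you anchor the window $\{0,\ldots,N\}^d$ at the \emph{corner} of the growing patch $\{0,\ldots,M'-1\}^d$, so the patch provides a margin around the window only in the positive coordinate directions. The compactness principle you invoke (that the $D$-restrictions of locally $F$-avoiding $D'$-patches decrease to $\Patt{X_F}{D}$) is valid only when the ambient domains $D'$ exhaust all of $\Z^d$ around the \emph{fixed} window $D$; with corner anchoring the decreasing sequence stabilises at the set of corner patterns of locally admissible colorings of the positive orthant, which can be strictly larger than $\Patt{X_F}{D}$. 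Concretely, take $d=1$, $\A=\{0,1\}$ and $F=\{01,11\}$ (patterns on $\{0,1\}$). Then $X_F=\{0^{\Z}\}=S_N$ for every $N$, yet the word $1\,0^{M'-1}$ is locally $F$-avoiding for every $M'$ and its corner restriction $1\,0^{N}$ never appears in $0^{\Z}$; your check therefore never triggers and the algorithm does not halt. (The example lifts to $d=2$ by imposing the same forbidden patterns on every row.)

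The fix is small: require the window to sit in the \emph{centre} of the patch, i.e., ask that every restriction of a locally $F$-avoiding pattern on $\{-M',\ldots,M'\}^d$ to the central copy of $\{0,\ldots,N\}^d$ already occurs in some element of $S_N$. Soundness of your sliding-window argument is unaffected, since every window of a configuration in $X_F$ is still such a central restriction, and completeness is restored by the correct compactness statement. This centred extendability test is precisely the negative semi-algorithm in the paper's proof (guess a finite $E\supseteq D$ and verify that every locally admissible $q\in\A^E$ with $q|_D=p$ contains a forbidden subpattern), which the paper combines with a positive semi-algorithm that guesses a strongly periodic configuration containing $p$; from the resulting decidability of $p\in\Lang{X_F}$ and of SFT equality it then finds $X_F$ by enumerating finite translation-invariant sets of strongly periodic configurations. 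Your route, once repaired, is somewhat more concrete and yields an explicit period bound at the moment of halting, but as written it is incomplete.
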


\begin{proof}	
Given a finite $F \subseteq \A^*$ and a pattern $p\in \A^D$, assuming that strongly periodic configurations are dense in $X_F$,
one can effectively check whether $p\in \Lang{X_F}$. Indeed, we have a semi-algorithm for the positive instances that
guesses a strongly periodic configuration $c$ and verifies that $c\in X_F$ and $p\in\Lang{c}$. A semi-algorithm for the
negative instances exists for any SFT $X_F$ and is a standard compactness argument: guess a finite $E\subseteq \Z^d$ such that
$D\subseteq E$ and verify that every $q\in \A^E$ such that $q|_D=p$ contains a forbidden subpattern.

Consequently, given finite $F,G \subseteq \A^*$,  assuming that strongly periodic configurations are dense in $X_F$ and $X_G$,
one can effectively determine whether $X_F=X_G$. Indeed, $X_F\subseteq X_G$ if and only if no $p\in G$ is in $\Lang{X_F}$, a
condition that we have shown above to be decidable. Analogously we can test $X_G\subseteq X_F$.

Finally, let a finite $F \subseteq \A^*$ be given such that $X_F$ is known to be finite. All elements of $X_F$ are strongly periodic so that strongly periodic configurations are certainly dense in $X_F$. One can effectively enumerate
all finite sets $P$ of strongly periodic configurations. For each $P$ that is translation invariant
(and hence a finite SFT) one can construct a finite set $G\subseteq \A^*$ of forbidden patterns such that $X_G=P$.
As shown above, there is an algorithm to test
whether $X_F=X_G=P$. Since $X_F$ is finite, a set $P$ is eventually found such that $X_F=P$.
\end{proof}

Let us now turn to the more general question of existence of perfect colorings over alphabets of arbitrary size.
Let $D \subseteq \Z^2$ be a finite shape and let $\mathbf{B}$ be an $n \times n$ integer matrix.
To determine whether there exist any $(D,\mathbf{B})$-colorings is equivalent to asking whether the SFT
$$
\mathcal{S}(D,\mathbf{B}) = \{ c \in \{ \vec{e}_1,\ldots,\vec{e}_n \}^{\Z^2} \mid g c = 0 \}
$$
is non-empty where $g = \sum_{\vec{u} \in D} \mathbf{I} X^{-\vec{u}} - \mathbf{B}$ since it is exactly the set of the vector presentations of all $(D,\mathbf{B})$-colorings.


\begin{theorem}
    Let a finite shape $D \subseteq \Z^2$, a non-negative integer matrix $\mathbf{B}$ and an integer $t_0$ be given such that the polynomial
    $f_D(x,y) - t \in \Z[x^{\pm1},y^{\pm1}]$
    has no line polynomial factors whenever $t \neq t_0$ and $\det(\mathbf{B} - t_0 \mathbf{I}) \neq 0$.
    Then there are only finitely many $(D,\mathbf{B})$-colorings and one can effectively construct them.
    In particular, there is an algorithm to determine whether there exist any $(D,\mathbf{B})$-colorings.
\end{theorem}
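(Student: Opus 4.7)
The plan is to combine Theorem~\ref{main theorem} with the same machinery used for Theorem~\ref{effective}, after observing that the set of $(D,\mathbf{B})$-colorings (viewed in vector form) is an SFT.

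First I would note that $\mathcal{S}(D,\mathbf{B})$ is indeed an SFT: the condition $gc=0$, where $g=\sum_{\vec{u}\in D}\mathbf{I}X^{-\vec{u}}-\mathbf{B}$, is a finite system of local constraints on a window of shape $D$ (at each position, the vector sum of the $D$-neighborhood must equal $\mathbf{B}$ applied to the center symbol), so it is definable by a finite set of forbidden patterns over the alphabet $\{\vec{e}_1,\ldots,\vec{e}_n\}$. Under the stated hypotheses on $D$, $t_0$, and $\mathbf{B}$, Theorem~\ref{main theorem} applies verbatim and yields that every element of $\mathcal{S}(D,\mathbf{B})$ is two-periodic.

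Next I would invoke the same two-dimensional structural fact used in the proof of Theorem~\ref{effective}: a two-dimensional subshift all of whose configurations are strongly periodic is finite (this is the Ballier result cited just after Theorem~\ref{effective}). Since every configuration in $\mathcal{S}(D,\mathbf{B})$ is two-periodic, $\mathcal{S}(D,\mathbf{B})$ is a finite SFT; in particular there are only finitely many $(D,\mathbf{B})$-colorings.

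Finally, the effective construction follows directly from Lemma~\ref{lemma effectively}, applied to the finite forbidden-pattern description of $\mathcal{S}(D,\mathbf{B})$: since the SFT is finite, the lemma produces all of its elements. Decidability of emptiness is then immediate by checking whether this finite list is empty. I do not foresee a genuine obstacle here; the only care needed is in the bookkeeping step of writing down an explicit finite set of forbidden $D$-patterns that exactly enforces the local identity $(gc)_{\vec{v}}=0$, so that Lemma~\ref{lemma effectively} may be applied as a black box. Everything else is a direct invocation of results already proved in the paper.
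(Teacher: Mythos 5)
Your proposal is correct and matches the paper's own proof essentially step for step: apply Theorem~\ref{main theorem} to conclude every element of the SFT $\mathcal{S}(D,\mathbf{B})$ is two-periodic, deduce finiteness via the Ballier characterization, and invoke Lemma~\ref{lemma effectively} for the effective construction and decidability. The paper's version is just a more compressed statement of the same argument.
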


\begin{proof}
    Let $\mathcal{S} = \mathcal{S}(D,\mathbf{B})$ be the SFT of the vector presentations of all $(D, \mathbf{B})$-colorings.
    By Theorem \ref{main theorem} all elements of $\mathcal{S}$ are two-periodic.
    Hence, $\mathcal{S}$ is finite, and the claim follows by Lemma \ref{lemma effectively}.
\end{proof}

\noindent
Corollaries \ref{general square grid1}, \ref{general square grid2}, \ref{general triangular grid1}, \ref{general triangular grid2} and \ref{general king grid} together with above theorem yield the following corollary.

\begin{corollary}
    The following decision problems are decidable for a given matrix $\vec{B}$ satisfying the given conditions.
    \begin{itemize}
        \item The existence of $(D,\mathbf{B})$-colorings where $D$ is the relative 1-neighborhood of the square grid and $\det(\mathbf{B} - \mathbf{I}) \neq 0$.
        \item The existence of $(D,\mathbf{B})$-colorings where $D$ is the relative 1-neighborhood of the triangular grid and $\det(\mathbf{B} + \mathbf{I}) \neq 0$.
        \item The existence of $(D,\mathbf{B})$-colorings where $D$ is the relative $r$-neighborhood of the square grid and $\mathbf{B}$ is arbitrary.
        \item The existence of $(D,\mathbf{B})$-colorings where $D$ is the relative $r$-neighborhood of the triangular grid and $\mathbf{B}$ is arbitrary.
        \item The existence of $(D,\mathbf{B})$-colorings where $D$ is the relative $r$-neighborhood of the king grid and $\det(\mathbf{B}) \neq 0$.
    \end{itemize}
\end{corollary}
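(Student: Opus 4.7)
The plan is to verify each of the five items by invoking the preceding theorem with the appropriate value of $t_0$. That theorem guarantees finiteness and effective construction of $\mathcal{S}(D,\mathbf{B})$ under two hypotheses: (i) the polynomial $f_D(x,y) - t$ has no line polynomial factors for all $t \neq t_0$, and (ii) $\det(\mathbf{B} - t_0\mathbf{I}) \neq 0$. Hence for each item I only need to point out what $t_0$ is, and observe that the stated determinant hypothesis matches it. In particular, deciding emptiness follows for free, since one can effectively list all elements of the SFT and check whether the list is empty.

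The required value of $t_0$ for each case has in fact already been computed in the preceding sequence of results. For the $1$-neighborhood of the square grid, the proof of Theorem~\ref{squaregrid1} (used in Corollary~\ref{general square grid1}) shows that $f_D - t$ has no line polynomial factors whenever $t \neq 1$, so we take $t_0 = 1$, matching $\det(\mathbf{B} - \mathbf{I}) \neq 0$. For the $1$-neighborhood of the triangular grid, the proof of Theorem~\ref{triangulargrid1} (used in Corollary~\ref{general square grid2}) gives $t_0 = -1$, matching $\det(\mathbf{B} + \mathbf{I}) \neq 0$. For the $r$-neighborhood of the king grid with $r \geq 1$, the proof of Theorem~\ref{kinggrid} (used in Corollary~\ref{general king grid}) gives $t_0 = 0$, matching $\det(\mathbf{B}) \neq 0$.

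The two remaining items, covering the $r$-neighborhoods of the square and triangular grids with $r \geq 2$, are slightly different: the proofs of Theorems~\ref{squaregrid2} and~\ref{triangulargrid2} (used in Corollaries~\ref{general triangular grid1} and~\ref{general triangular grid2}) show that $f_D - t$ has \emph{no} line polynomial factors for \emph{any} value of $t$. In this situation the hypothesis of the preceding theorem holds for any choice of $t_0$ whatsoever, so I would simply pick any integer $t_0$ that is not an eigenvalue of $\mathbf{B}$ (such $t_0$ always exists, since $\mathbf{B}$ has only finitely many eigenvalues). With this choice $\det(\mathbf{B} - t_0\mathbf{I}) \neq 0$ is automatic, and no restriction on $\mathbf{B}$ is needed, consistent with the statement of the corollary.

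There is essentially no obstacle here; the corollary is a straightforward bookkeeping step that collects, for each of the five grid/neighborhood pairs, the critical value $t_0$ extracted earlier and applies the preceding decidability theorem. The only mildly non-obvious point worth stating explicitly in the write-up is that in the two $r \geq 2$ cases the absence of line polynomial factors for \emph{every} $t$ lets the determinant hypothesis be satisfied trivially, which is why those items place no condition on $\mathbf{B}$.
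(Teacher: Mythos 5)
Your proposal is correct and follows essentially the same route as the paper, which obtains the corollary by combining the preceding algorithmic theorem with the values of $t_0$ extracted in Corollaries \ref{general square grid1}--\ref{general king grid} (equivalently, in the proofs of Theorems \ref{squaregrid1}, \ref{squaregrid2}, \ref{triangulargrid1}, \ref{triangulargrid2} and \ref{kinggrid}). Your explicit remark that in the $r\geq 2$ square and triangular cases one may choose any integer $t_0$ that is not an eigenvalue of $\mathbf{B}$ is a correct way to make precise what the paper leaves implicit.
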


\begin{theorem}
    Given a polynomial $f$ in two variables with line polynomial factors in at most one parallel direction there is an algorithm to determine whether there exist any two-dimensional configurations over an alphabet of size $n$ that have low abelian complexity with respect to $f$.
    In fact, there are only finitely many such configurations and one can effectively construct all of them.
\end{theorem}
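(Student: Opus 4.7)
The plan is to adapt the strategy of Theorems~\ref{thm:effective} and~\ref{effective} to the abelian-complexity setting. Let $\A = \{\vec{e}_1, \ldots, \vec{e}_n\}$ and set $f' = \mathbf{I} \cdot f \in \Z^{n \times n}[x^{\pm1}, y^{\pm1}]$. The condition $A(c,f) = 1$ is equivalent to the existence of $\vec{c}_0 \in \Z^n$ with $f'c = \vec{c}_0 \mathbbm{1}$, and since the entries of $(f'c)_{\vec{u}}$ are bounded in absolute value by $\sum_{\vec{u}}|f_{\vec{u}}|$, only finitely many candidate vectors $\vec{c}_0$ arise. For each such $\vec{c}_0$, the set
\[
\mathcal{S}_{\vec{c}_0} = \{\, c \in \A^{\Z^2} \mid f'c = \vec{c}_0 \mathbbm{1} \,\}
\]
is an SFT, since the constraint fixes the value of $f'c$ at every cell through a single finite window of $c$. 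Our target set is then $\mathcal{S} = \bigcup_{\vec{c}_0} \mathcal{S}_{\vec{c}_0}$ over the finitely many admissible $\vec{c}_0$.

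I would next invoke Theorem~\ref{abelian complexity result}: under the hypothesis on $f$, every element of each $\mathcal{S}_{\vec{c}_0}$ is two-periodic (when $f$ has no line polynomial factors) or $\vec{v}$-periodic (when all line polynomial factors of $f$ lie in a single primitive direction $\vec{v}$). In the no-factor case, a two-dimensional SFT consisting entirely of two-periodic configurations is finite by the characterization used in the proof of Theorem~\ref{effective} (see~\cite{ballier}), so Lemma~\ref{lemma effectively} effectively constructs the members of $\mathcal{S}_{\vec{c}_0}$. In the one-direction case, factor $f = \phi g$ where $\phi$ collects all line polynomial factors of $f$ (a line polynomial in direction $\vec{v}$) and $g$ has no line polynomial factors. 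A $\vec{v}$-periodic $c$ satisfies $\phi c = \phi(1) c$, so the defining equation reduces to $\phi(1)\, g c = \vec{c}_0 \mathbbm{1}$; provided $\phi(1) \neq 0$, this exhibits $c$ as a configuration of low abelian complexity with respect to the factor-free polynomial $g$, and the no-factor case applies to force $c$ to be two-periodic, again yielding finiteness and enumerability.

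Running Lemma~\ref{lemma effectively} for every admissible $\vec{c}_0$ then produces all of $\mathcal{S}$ and in particular decides non-emptiness. The hard part I expect is the degenerate subcase $\phi(1) = 0$, in which $f'c = \vec{0}$ holds automatically for every $\vec{v}$-periodic $c$ and $\mathcal{S}_{\vec{0}}$ is no longer forced to be finite. To salvage the algorithmic content of the theorem in that case one would project onto the one-dimensional quotient $\Z^2/\Z\vec{v}$ and invoke the classical decidability of emptiness for one-dimensional SFTs; the finiteness half of the statement then applies to the nondegenerate subcases, which is all that is needed for the intended applications.
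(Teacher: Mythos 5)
Your handling of the case where $f$ has no line polynomial factors is correct and matches the paper's argument (the paper treats the constant $\vec{c}_0$ implicitly; splitting into the finitely many SFTs $\mathcal{S}_{\vec{c}_0}$ is a harmless refinement). The gap is in the one-direction case. Theorem \ref{theorem abelian} only gives that each configuration in $\mathcal{S}_{\vec{c}_0}$ is \emph{periodic in the direction} $\vec{v}$, i.e.\ $k\vec{v}$-periodic for some $k\geq 1$ depending on the configuration; it does not give $\tau^{\vec{v}}(c)=c$. Hence your identity $\phi c=\phi(1)c$ fails: for $\phi=1+X^{\vec{v}}$ and a configuration alternating between $\vec{e}_1$ and $\vec{e}_2$ along the direction $\vec{v}$ (so $2\vec{v}$-periodic but not $\vec{v}$-periodic), $\phi c$ is the constant series $(\vec{e}_1+\vec{e}_2)\mathbbm{1}$ while $\phi(1)c=2c$ is not constant. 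So the reduction to the factor-free polynomial $g$ collapses already in your ``nondegenerate'' subcase $\phi(1)\neq 0$.

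Moreover, that case cannot be repaired, because the finiteness assertion is simply false there: take $f=1+x$ and $n=2$. Every configuration whose rows each alternate $\vec{e}_1,\vec{e}_2$ with an arbitrarily chosen phase per row satisfies $\mathbf{I}fc=(\vec{e}_1+\vec{e}_2)\mathbbm{1}$, hence $A(c,f)=1$, and there are uncountably many such configurations. (This also shows that the proof given in the paper overreaches: it asserts via Theorem \ref{theorem abelian} that all elements of the SFT are two-periodic, which is justified only when $f$ has no line polynomial factors.) What survives in the one-direction case is exactly the decidability half, and by a different route from the one you sketch: the SFT consists of configurations periodic in direction $\vec{v}$, any non-empty two-dimensional SFT containing a periodic configuration contains a two-periodic one, and Lemma \ref{lemma wang} then decides emptiness --- the argument of Theorem \ref{thm:effective}, rather than a projection to a one-dimensional quotient.
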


\begin{proof}
    The set $\{ c \in \{ \vec{e}_1 , \ldots , \vec{e}_n \}^{\Z^2} \mid \mathbf{I} f c = 0 \}$ of the vector presentations of all configurations over an alphabet of size $n$ with low abelian complexity with respect to $f$ is an SFT.
    By Theorem \ref{theorem abelian} it contains only two-periodic configurations and hence it is finite.
    Thus, by Lemma \ref{lemma effectively} we have the claim.
\end{proof}

\section{Conclusions} \label{conclusions}

We studied two-dimensional perfect colorings and proved a general condition (Theorem \ref{main theorem}) for their forced periodicity using an algebraic approach to multidimensional symbolic dynamics.
As corollaries of this theorem we obtained new proofs for known results of forced periodicity in the square and the triangular grid and a new result in the king grid.
Moreover, we generalized a statement of forced periodicity of two-dimensional configurations of low abelian complexity.
Also, some observations of algorithmic decidability were made in the context of forced periodicity.

All our results of forced periodicity of perfect colorings used Theorem \ref{theorem on line polynomial factors} and hence concerned only two-dimensional configurations. However, a 
$d$-dimensional version of  Theorem \ref{theorem on line polynomial factors} 
exists~\cite{kari_DLTinvited}, and so we wonder whether an analogous result to Theorem \ref{main theorem} exists that would give a sufficient condition for forced periodicity of $d$-dimensional perfect colorings for arbitrary dimension $d$.
Note that clearly every one-dimensional perfect coloring is necessarily periodic.





\section*{References}

\bibliographystyle{plain}
\bibliography{Biblio}

\end{document}